\def\ds{\displaystyle}
\def\O{\Omega}
\def\l{\lambda}
\renewcommand\sp{\mathop{\mathrm{Sp}}\nolimits}
\newcommand{\set}[1]{\lbrace #1 \rbrace}
\newcommand{\jumpp}[1]{\left\llbracket #1 \right\rrbracket}
\newcommand{\norm}[1]{\lVert#1\rVert}
\newcommand\bu{\boldsymbol{u}}
\newcommand\bv{\boldsymbol{v}}
\newcommand\bw{\boldsymbol{w}}
\newcommand\bn{\boldsymbol{n}}
\newcommand\curl{\mathop{\mathbf{curl}}\nolimits}
\def\hdel{\widehat{\delta}}
\def\CM{\mathcal{X}}
\def\CN{\mathcal{Y}}
\newcommand\bT{\boldsymbol{T}}
\newcommand\0{\mathbf{0}}
\newcommand\bxi{\boldsymbol{\xi}}
\def\CT{{\mathcal T}}
\newcommand\bsig{\boldsymbol{\sigma}}
\newcommand\btau{\boldsymbol{\tau}}
\newcommand\bPi{\boldsymbol{\Pi}}
\newcommand\bphi{\boldsymbol{\varphi}}
\newcommand\R{\mathbb{R}}
\renewcommand\H{\mathrm{H}}
\renewcommand\L{\mathrm{L}}
\renewcommand\O{\Omega}
\newcommand\DO{\partial\O}
\newcommand\bdiv{\mathop{\mathbf{div}}\nolimits}
\renewcommand\div{\mathop{\mathrm{div}}\nolimits}
\renewcommand\sp{\mathop{\mathrm{sp}}\nolimits}
\newcommand\LO{\L^2(\O)}
\newcommand\HsO{\H^s(\O)}
\newcommand{\vertiii}[1]{{\left\vert\kern-0.25ex\left\vert\kern-0.25ex\left\vert #1 
    \right\vert\kern-0.25ex\right\vert\kern-0.25ex\right\vert}}
\crefname{hypothesis}{Hypothesis}{Hypotheses}
\title{Error estimates for a vorticity-based velocity-stress formulation of the Stokes eigenvalue problem\thanks{Submitted to the editors DATE.
\funding{The first author was partially supported by
	DIUBB through project 2120173 GI/C Universidad del B\'io-B\'io and ANID-Chile through FONDECYT project 11200529 (Chile).
}}}
\author{Felipe Lepe\thanks{GIMNAP-Departamento de Matem\'atica, Universidad del B\'io - B\'io, Casilla 5-C, Concepci\'on, Chile. \email{flepe@ubiobio.cl}.}
\and Gonzalo Rivera\thanks{Departamento de Ciencias Exactas,
	Universidad de Los Lagos, Casilla 933, Osorno, Chile. \email{gonzalo.rivera@ulagos.cl}.}
\and Jesus Vellojin\thanks{GIMNAP-Departamento de Matem\'atica, Universidad del B\'io - B\'io, Casilla 5-C, Concepci\'on, Chile.. \email{jesus.vellojinm@usm.cl}.}}
\begin{document}

\maketitle

\begin{abstract}
The aim of this paper is to analyze a mixed formulation for the two dimensional Stokes eigenvalue problem where the unknowns are the stress and the velocity, whereas the  pressure can be recovered with a simple postprocess of the stress. The stress tensor is written in terms of the vorticity of the fluid, leading to an alternative  mixed formulation that incorporates this physical feature. We propose a mixed numerical method where the stress is approximated with suitable N\'edelec finite elements, whereas	the  velocity is approximated with piecewise polynomials of degree $k\geq 0$. With the aid of the compact operators theory we derive convergence of the method and spectral correctness. Moreover, we propose a reliable and efficient  a posteriori error estimator for our spectral problem. We report numerical tests in different domains, computing the spectrum and convergence orders, together with a computational analysis for the proposed estimator. In addition, we use the corresponding error estimator to drive an adaptive scheme, and we report the results of a  numerical test, that allow us to assess the performance of this approach.
\end{abstract}

\begin{keywords}
  Stokes equations, eigenvalue problems,  error estimates, a posteriori error estimates, mixed problems
\end{keywords}

\begin{AMS}
  35Q35,  65N15, 65N25, 65N30, 65N50, 76D07
\end{AMS}

\section{Introduction}\label{sec:intro}
The Stokes problem is a system of equations that describes the motion of a certain fluid. For an open domain $\O\subset\mathbb{R}^2$ with Lipschitz boundary $\partial \O$, we are interested in the Stokes eigenvalue problem
\begin{equation}\label{def:stokes_source}
	\left\{
	\begin{array}{rcll}
		-\mu\Delta\bu+\nabla p & = & \lambda\bu&  \text{ in } \quad \Omega, \\
		\div\bu & = & 0 & \text{ in } \quad \Omega, \\
		\bu & = & \boldsymbol{0} & \text{ on } \quad \partial\Omega,
	\end{array}
	\right.
\end{equation}
where $\mu>0$ is the kinematic viscosity,  $\bu$ is the velocity and $p$ is the pressure. 

In the nowadays there are a number of papers where different formulations, together with  numerical methods, have been proposed 
in order to solve \eqref{def:stokes_source} \cite{MR3197278, MR3864690, MR4071826,MR4077220, LRVSISC,Lepe2021,MR2473688, MR3335223}.  Each of these contributions are concerned in the analysis of mathematical formulations and  the development of robust numerical methods that, with high accuracy, are capable to approximate the spectrum of \eqref{def:stokes_source}, namely the eigenvalues and its associated eigenfunctions. Not only the computation of the spectrum has been a subject of study, but also adaptivity strategies when the eigenvalues are not smooth enough and hence, convergence orders of approximation are affected by this lack of regularity.

The development of numerical methods to solve eigenvalue problems, particularly \eqref{def:stokes_source}, is a current subject of study 
in the community of numerical analysis, since the knowledge of the physical eigenmodes of the spectral Stokes system are important in certain applications for the development and design of pipes, structures, containers, dams, etc.. Moreover, not only the velocity and pressure are quantities of interest, but also others as the stress, vorticity, stream functions, just to mention some of the most relevant. It is this need that motivates the analysis of mixed formulations and, hence, mixed numerical methods. These mixed methods have been plenty analyzed for load problems 
\cite{ MR4263828, MR3164557, MR4082227, MR3626409, MR2835711,MR2594823} where finite elements, virtual elements, discontinuous methods, just to mention a few,  have been considered. These methods and formulations can be explored in order to solve the Stokes spectral problem with an important accuracy.

On the other hand, adaptive mesh refinement strategies based on a posteriori error indicators play a relevant role in the numerical solution of partial differential equations in a general sense. Several approaches have been considered to design error estimators based on the residual equations (see \cite{MR1885308,MR1284252} and the references therein). In particular, for the load problem associated to the Stokes  equations, we can mention as recent developments \cite{MR4327458,MR4263661,MR4044773}, whereas for the Stokes spectral problems we refer to \cite{MR3197278,MR4022421,MR3095260,MR4372679}, and the references therein.

Now, following with our research program related to mixed formulations and their discretizations  for eigenvalue problems, the present work introduces a  formulation that is inspired in  \cite{MR2835711} for the load problem, where an  augmented method is introduced in order to approximate the velocity, pressure and stress. Despite the fact that this formulation is more expensive since the pressure is directly computed with the method instead of eliminate it, the augmented method is flexible on the choice of families of finite elements. In our case, since we are interested in the spectral problem associated to the Stokes problem, a more simple formulation is enough for this purpose, since the computational costs are reduced, and hence, the computational solvers for eigenvalue problems compute the solutions in less time without loss of accuracy. More precisely, since the solution operator is defined for the velocity component only (cf. Section \ref{sec:model_problem}),  our primary goal is to compute this unknown (and its associated eigenvalues) and then derive the others quantities of interest in postprocessing. In fact, the pressure and vorticity can be computed using a linear combination of stress and velocity.  This is a clear advantage compared with the recent work \cite{LRVSISC}, where the pressure is incorporated in the formulation and the proposed numerical methods, implying more expensive mixed methods. However, the analysis presented in the present study can be perfectly adapted to include the pressure in the preprocessing, with a consequent increase in computational cost.

The  contents of our papers are presented as follows: in Section \ref{sec:model_problem} we present the Stokes eigenvalue problem, introducing the stress which we write in terms of the vorticity, leading to a variational formulation where the unknowns are the aforementioned stress and the velocity field. We introduce the solution operator and recall some regularity properties for the eigenfunctions. In Section \ref{sec:mixed_fem} we introduce the mixed finite elements in which our method is based. Here, we present the finite element spaces, approximation properties and hence, the discrete eigenvalue problem. The discrete solution operator is also defined. In Section \ref{sec:conv_error} we develop the convergence analysis with the compact operators approach. Error estimates for the eigenvalues and eigenfunction are derived.  Section \ref{sec:apost} is dedicated to an a posteriori error analysis, where we present a residual based a posteriori error analysis, together with the corresponding reliability and efficiency for the proposed estimator. Finally, in Section \ref{sec:numerics} we report a series of numerical tests to illustrate all our theoretical results. We start with a priori results on different geometries, testing the robustness of our scheme in different orders of approximation. This is followed by an adaptivity test, where we use a non-convex geometry to verify the performance of the proposed estimator.


\subsection{Preliminaries and notations}
Given
any Hilbert space $X$, let $X^2$ and $\mathbb{X}$ denote, respectively,
the space of vectors and tensors  with
entries in $X$. In particular, $\mathbb{I}$ is the identity matrix of
$\R^{2\times 2}$, and $\mathbf{0}$ denotes a generic null vector or tensor. 
Given $\btau:=(\tau_{ij})$ and $\bsig:=(\sigma_{ij})\in\R^{2\times 2}$, 
we define, as usual, 
the tensor inner product $\btau:\bsig:=\sum_{i,j=1}^2 \tau_{ij}\sigma_{ij}$. 

Let $\O$ be a polygonal Lipschitz bounded domain of $\R^2$ with
boundary $\DO$. For $s\geq 0$, $\norm{\cdot}_{s,\O}$ stands indistinctly
for the norm of the Hilbertian Sobolev spaces $\HsO$, $\HsO^2$ or
$\mathbb{H}^s(\O):=\HsO^{2\times 2}$ for scalar, vectorial and tensorial fields, respectively, with the convention $\H^0(\O):=\LO$, $\H^0(\O)^{2}=\L^2(\O)^2$ and $\mathbb{H}^0(\O):=\mathbb{L}^2(\O)$. We also define for
$s\geq 0$ the Hilbert space 
$\mathbb{H}(\curl;\O):=\set{\btau\in\mathbb{L}^2(\O):\ \curl(\btau)\in\mathrm{L}(\O)^2}$, whose norm
is given by $\norm{\btau}^2_{\curl,\O}
:=\norm{\btau}_{0,\O}^2+\norm{\curl(\btau)}^2_{0,\O}$.
The relation $\texttt{a} \lesssim \texttt{b}$ indicates that $\texttt{a} \leq C \texttt{b}$, with a positive constant $C$ which is 
independent of $\texttt{a}$,  $\texttt{b}$ and the mesh size $h$, which will be introduced in Section \ref{sec:mixed_fem}.

Let us define the tensors
$$
\mathbb{J}:=\begin{pmatrix}0&1\\-1&0\end{pmatrix}\quad\text{and}\quad\btau^r:=\btau-\frac{1}{2}(\btau:\mathbb{J})\mathbb{J}\quad\forall\btau\in\mathbb{R}^{2\times 2},
$$
where the relation $\btau^r:\mathbb{J}=0$ holds.

Let $\boldsymbol{\varphi}=(\varphi_1,\varphi_2)^{\texttt{t}}$ and $\btau=(\tau_{ij})$ be vector- and tensor -valued fields, respectively, we define
$$
\underline{\curl}(\boldsymbol{\varphi}):=\begin{pmatrix} \displaystyle -\frac{\partial \varphi_1}{\partial x_2} &\displaystyle\frac{\partial \varphi_1}{\partial x_1}\\ \displaystyle-\frac{\partial \varphi_2}{\partial x_2} &\displaystyle\frac{\partial \varphi_2}{\partial x_1}\end{pmatrix},\,\,\,\text{and}\,\,\curl(\btau):=\begin{pmatrix}\displaystyle\frac{\partial\tau_{12}}{\partial x_1}-\frac{\partial\tau_{11}}{\partial x_2}\\
	\displaystyle\frac{\partial\tau_{22}}{\partial x_1}-\frac{\partial\tau_{21}}{\partial x_2}\end{pmatrix}.
$$
Finally, through our paper, we denote by $\div$ and $\bdiv$ the divergence operator when is applied to vectorial and tensorial fields, respectively.

\section{The model problem}
\label{sec:model_problem}

Let  $\O\subset\mathbb{R}^2$ be an open bounded domain with Lipschitz boundary $\partial\O$. Let us write  the stress tensor $\bsig$ in terms of the vorticity as follows 
$\bsig:=\mu\underline{\curl}(\bu)-p\mathbb{J}$. From this relation, we observe that the vorticity of the fluid can be recovered with the relation
$
\underline{\curl}(\bu)=\frac{1}{\mu}(\bsig+p\mathbb{J}).
$

Since $\Delta\bu=\curl(\underline{\curl}(\bu))$ and $\curl(p\mathbb{J})=\nabla p$, the first equation on system \eqref{def:stokes_source} is rewritten as $\curl(\bsig)=-\lambda\bu$ in $\O$.

On the other hand, the identity  $\div(\bu)=\underline{\curl}(\bu):\mathbb{J}$ holds, and hence,  the second equation on \eqref{def:stokes_source} is rewritten  as $\underline{\curl}(\bu):\mathbb{J}=0$ in $\O$. With these relations at hand, \eqref{def:stokes_source} now reads as follows: Find the stress $\bsig$, the velocity $\bu$ and the pressure $p$ such that 
\begin{equation}\label{def:stokes_sup}
	\left\{
	\begin{array}{rcll}
		\bsig-\mu\underline{\curl}(\bu)+p\mathbb{J}& = & \boldsymbol{0}&  \text{ in } \quad \Omega, \\
		\curl(\bsig)& = & -\lambda\bu & \text{ in } \quad \Omega, \\
		\underline{\curl}(\bu):\mathbb{J}&=&0&\text{ in } \quad \Omega, \\
		\bu & = & \boldsymbol{0} & \text{ on } \quad \partial\Omega.
	\end{array}
	\right.
\end{equation}

Algebraic manipulations reveal that the pressure satisfies $p=-1/2(\bsig:\mathbb{J})$. Hence, we can eliminate the pressure on  \eqref{def:stokes_sup} leading to the following  equivalent system
\begin{equation}\label{def:stokes_sup2}
	\left\{
	\begin{array}{rcll}
		\bsig^r-\mu\underline{\curl}(\bu)& = & \boldsymbol{0}&  \text{ in } \quad \Omega, \\
		\curl(\bsig)& = & -\lambda\bu & \text{ in } \quad \Omega, \\
		\bu & = & \boldsymbol{0} & \text{ on } \quad \partial\Omega.
	\end{array}
	\right.
\end{equation}

Now, a variational formulation for \eqref{def:stokes_sup2} reads as follows: Find $\lambda\in\mathbb{R}$ and  $\0 \neq(\bsig,\bu)\in\mathbb{H}(\curl,\O)\times \L^2(\O)^2$ such that
\begin{align}
	\label{ec1}a(\bsig,\btau)+b(\btau,\bu)&=0\,\,\,\,\quad\quad\quad\forall \btau\in\mathbb{H}(\curl,\O),\\
	\label{ec2}b(\bsig,\bv)&=-\lambda(\bu,\bv)_{0,\O}\quad\forall\bv\in \L^2(\O)^2.
\end{align}

Let us define the spaces $\mathbb{H}:=\mathbb{H}(\curl,\O)$ and $\mathbf{Q}:=\L^2(\O)^2$. With these definitions at hand, we introduce the bilinear bilinear forms $a:\mathbb{H}\times\mathbb{H}\rightarrow\mathbb{R}$ and 
$b:\mathbb{H}\times \mathbf{Q}\rightarrow\mathbb{R}$ defined as follows
$$
a(\bxi,\btau):=\frac{1}{\mu}\int_{\O}\bxi^r:\btau^r\quad 
\,\,\text{and}\,\,\,\,\,
b(\bxi,\bv):=\int_{\O}\bv\cdot\curl(\bxi)\quad\forall\boldsymbol{\xi},\btau\in\mathbb{H},\,\,\forall\bv\in\mathbf{Q}.
$$
For our analysis, let us consider the decomposition 
\begin{equation}
	\label{descomposicion-H0}
	\mathbb{H}(\curl,\O)=\mathbb{H}_0\oplus\mathbb{R}\mathbb{J},
\end{equation}
where
$$
\mathbb{H}_0:=\left\{\btau\in\mathbb{H}\,:\,\int_{\O}\btau:\mathbb{J}=0\right\}.
$$
The need of this space is motivated due the non-uniqueness of solution of \eqref{ec1}--\eqref{ec2}. To make matter precise, for any $c\in\mathbb{R}$, the duo $(c\mathbb{J},\boldsymbol{0})$ is a solution of the homogeneous problem associated to  \eqref{ec1}--\eqref{ec2}.
Now,  we write the following eigenvalue problem: find $\lambda\in\mathbb{R}$ and $\0 \neq(\bsig,\bu)\in\mathbb{H}_0\times \mathbf{Q}$ such that 
%
\begin{align}
	\label{ec1_kappa0}a(\bsig,\btau)+b(\btau,\bu)&=0\,\,\,\,\quad\quad\quad\forall \btau\in\mathbb{H}_0,\\
	\label{ec2_kappa0}b(\bsig,\bv)&=-\lambda(\bu,\bv)_{0,\O}\quad\forall\bv\in \mathbf{Q}.
\end{align}

We recall some important results that allows us to establish the well posedness of our mixed formulation. The following result is instrumental.
\begin{lemma}
	\label{lmm:bound_r}
	There exists a constant $C>0$, depending on $\O$, such that for all $\btau\in\mathbb{H}_0$ there holds
	$$
	C\|\btau\|_{0,\O}^2\leq \|\btau^r\|_{0,\O}^2+\|\curl(\btau)\|_{0,\O}^2.
	$$
\end{lemma}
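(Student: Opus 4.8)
The plan is to decompose $\btau$ into its symmetric and skew parts and reduce the estimate to a bound on the scalar skew part. Setting $q:=\btau:\mathbb{J}\in\LO$, the identity $\btau=\btau^r+\tfrac12\,q\,\mathbb{J}$ together with the pointwise relation $\btau^r:\mathbb{J}=0$ shows that the two summands are orthogonal in $\mathbb{L}^2(\O)$; since $\mathbb{J}:\mathbb{J}=2$, this gives
\[
\|\btau\|_{0,\O}^2=\|\btau^r\|_{0,\O}^2+\tfrac12\|q\|_{0,\O}^2.
\]
Hence it suffices to show $\|q\|_{0,\O}^2\lesssim\|\btau^r\|_{0,\O}^2+\|\curl(\btau)\|_{0,\O}^2$; the claimed inequality then follows by rearranging, with $C$ the reciprocal of the resulting constant.

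To control $q$ I would work in the negative norm $\|\cdot\|_{-1,\O}$ of $\H^{-1}(\O)^2:=(\H^1_0(\O)^2)'$. Using the linearity of $\curl$ and the elementary identity $\curl(q\,\mathbb{J})=\nabla q$, the decomposition yields $\curl(\btau)=\curl(\btau^r)+\tfrac12\nabla q$, that is, $\nabla q=2\bigl(\curl(\btau)-\curl(\btau^r)\bigr)$. Since $\curl$ maps $\mathbb{L}^2(\O)$ boundedly into $\H^{-1}(\O)^2$ (integrate by parts against a field in $\H^1_0(\O)^2$), we obtain
\[
\|\nabla q\|_{-1,\O}\lesssim\|\curl(\btau)\|_{0,\O}+\|\curl(\btau^r)\|_{-1,\O}\lesssim\|\curl(\btau)\|_{0,\O}+\|\btau^r\|_{0,\O}.
\]

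Finally, the membership $\btau\in\mathbb{H}_0$ means precisely that $\int_\O q=\int_\O\btau:\mathbb{J}=0$, so $q$ has zero mean. I would then invoke the classical Ne\v{c}as inequality on the bounded connected Lipschitz domain $\O$ --- equivalently, the surjectivity of $\div:\H^1_0(\O)^2\to\L^2_0(\O)$ and its associated inf--sup condition --- in the form $\|q\|_{0,\O}\lesssim\|\nabla q\|_{-1,\O}$, valid for every zero-mean $q\in\LO$. Combining this with the previous display gives $\|q\|_{0,\O}\lesssim\|\curl(\btau)\|_{0,\O}+\|\btau^r\|_{0,\O}$, hence the reduced estimate, which completes the argument; the dependence of $C$ on $\O$ is inherited from the Ne\v{c}as constant. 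I expect this last step to be the main obstacle: a soft compactness/contradiction argument is unavailable here because $\mathbb{H}(\curl,\O)$ does not embed compactly into $\mathbb{L}^2(\O)$, so the zero-mean $\H^{-1}$-to-$\mathrm{L}^2$ bound is the genuinely quantitative ingredient and the true source of the geometric constant.
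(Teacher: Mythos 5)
Your proof is correct. The paper itself does not prove this lemma but cites it from \cite[Lemma 2.3]{MR2835711}, and the argument there is the same in substance as yours: the orthogonal splitting $\btau=\btau^r+\tfrac12(\btau:\mathbb{J})\mathbb{J}$ followed by control of the zero-mean scalar part $q=\btau:\mathbb{J}$ through the quantitative divergence inf-sup condition, which you invoke in its dual (Ne\v{c}as, negative-norm) form rather than in the primal form of constructing $\boldsymbol{z}\in \H_0^1(\O)^2$ with $\div\boldsymbol{z}=q$, $\|\boldsymbol{z}\|_{1,\O}\lesssim\|q\|_{0,\O}$ and integrating by parts.
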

\begin{proof}
	See  \cite[Lemma 2.3]{MR2835711}.
\end{proof}

Let us introduce the  the kernel of $b(\cdot,\cdot)$, defined as the  following space
$$
\mathbb{V}:=\{\btau\in\mathbb{H}_0\,:\, b(\btau,\bv)=0\,\,\forall\bv\in \L^2(\O)^2\}=\{\btau\in\mathbb{H}_0:\,\curl(\btau)=\boldsymbol{0}\,\,\text{in}\,\O\},
$$
in which, according to Lemma \ref{lmm:bound_r}, $a(\cdot,\cdot)$ is coercive (see \cite[Theorem 2.4]{MR2835711}). Also, the bilinear form $b(\cdot,\cdot)$ satisfies the  following inf-sup condition  (see \cite[Theorem 2.2]{MR2835711})
\begin{equation}
	\label{eq:inf_sup}
	\displaystyle\sup_{\boldsymbol{0}\neq\btau\in\mathbb{H}_0}\frac{\displaystyle\int_{\O}\bv\cdot\curl(\btau)}{\|\btau\|_{\curl,\O}}\geq\beta\|\bv\|_{0,\O}\quad\forall\bv\in \mathbf{Q},
\end{equation}
where $\beta$ is a positive constant.

With these ingredients at hand, we are in position to introduce the solution operator
$$
\bT:\mathbf{Q} \rightarrow \mathbf{Q} ,\qquad \boldsymbol{f}\mapsto \bT\boldsymbol{f}:=\widehat{\bu}, 
$$
where the pair  $(\widehat{\bsig}, \widehat{\bu})\in\mathbb{H}_0\times \mathbf{Q}$ is the solution of the following well posed source problem
\begin{align}
	\label{eq1_source}a(\widehat{\bsig}, \btau)+b(\btau,\widehat{\bu})&=0\,\,\,\,\quad\quad\quad\forall \btau\in \mathbb{H}_0,\\
	\label{eq2_source}b(\widehat{\bsig},\bv)&=-(\boldsymbol{f},\bv)_{0,\O}\quad\forall\bv\in \mathbf{Q},
\end{align}
implying  that $\bT$ is well defined due to the Babu\^{s}ka-Brezzi theory. Moreover, 
we have the following estimate
$$
\|\widehat{\bsig}\|_{\curl,\O}+\|\widehat{\bu}\|_{0,\O} \lesssim\|\boldsymbol{f}\|_{0,\O}.
$$
Therefore, recalling that the continuous dependence result given above is equivalent to the global inf-sup condition for the continuous formulation \eqref{eq1_source}--\eqref{eq2_source}. i.e:
\begin{equation}
	\label{eq:complete_infsup}
	\|(\boldsymbol{\tau}\hspace*{-0.045cm}, \bv)\|_{\mathbb{H}_{0}\times \mathbf{Q}}\lesssim \hspace*{-0.1cm}\displaystyle\sup_{\underset{(\boldsymbol{\xi},\boldsymbol{w})\neq \boldsymbol{0}}{(\boldsymbol{\xi},\boldsymbol{w})\in \mathbb{H}_{0}\times\mathbf{Q}}}\frac{a(\boldsymbol{\tau},\boldsymbol{\xi})\hspace*{-0.045cm}+\hspace*{-0.045cm}b(\boldsymbol{\xi},\bv)\hspace*{-0.045cm}+\hspace*{-0.045cm}b(\btau,\boldsymbol{w})}{\|(\boldsymbol{\xi},\boldsymbol{w})\|_{\mathbb{H}_{0}\times \mathbf{Q}}}.
\end{equation}

Elementary computations reveal that $\bT$ is selfadjoint respect to the $\L^2$ inner product.
We also observe that the triplet $(\lambda, (\bsig,\bu))\in\mathbb{R}\times \mathbb{H}_0\times \mathbf{Q}$ solves \eqref{ec1_kappa0}--\eqref{ec2_kappa0} if and only if $(\kappa,\bu)$ is an eigenpair of $\bT$, i.e. $\ds \bT\bu=\kappa\bu$ with $\kappa:=1/\lambda$.

From  \cite{MR975121,MR1600081} we have the following regularity result for the Stokes spectral problem.
\begin{theorem}
\label{th:reg_velocity}
If $(\bu,p,\lambda)\in \H_0^1(\Omega)^2\times \L_0^2(\Omega)\times\mathbb{R}$ solves \eqref{def:stokes_source}, there exists $s>0$ such that $\bu\in \H^{1+s}(\Omega)^2$ and $p\in \H^s(\Omega)$.
\end{theorem}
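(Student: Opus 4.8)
The plan is to recast the Stokes eigenvalue problem as a stationary Stokes source problem carrying an $\L^2$ body load, and then to invoke the elliptic regularity theory for the Stokes system on polygonal domains supplied by the cited references. First I would observe that if $(\bu,p,\lambda)$ solves \eqref{def:stokes_source}, then $(\bu,p)$ is a solution of the boundary value problem
\begin{equation*}
-\mu\Delta\bu+\nabla p=\bFF,\qquad \div\bu=0\ \text{ in }\O,\qquad \bu=\0\ \text{ on }\DO,
\end{equation*}
with body force $\bFF:=\lambda\bu$. Since the eigenfunction already belongs to $\H_0^1(\O)^2$, and this space embeds continuously into $\L^2(\O)^2$, the load $\bFF=\lambda\bu$ is of class $\L^2(\O)^2$; hence $(\bu,p)$ is (after the normalization $p\in\L_0^2(\O)$ that removes the constant indeterminacy) the unique solution of a well-posed Stokes source problem with square-integrable data.

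Next I would apply the regularity results of \cite{MR975121,MR1600081}, which assert that on a polygonal Lipschitz domain the solution of the Stokes system with right-hand side in $\L^2(\O)^2$ gains fractional Sobolev regularity: there exists $s>0$, determined by the interior angles of $\O$, such that the velocity lies in $\H^{1+s}(\O)^2$ and the pressure in $\H^s(\O)$, with a corresponding a priori bound by $\norm{\bFF}_{0,\O}$. Substituting the particular load $\bFF=\lambda\bu\in\L^2(\O)^2$ into these estimates delivers exactly the claimed memberships $\bu\in\H^{1+s}(\O)^2$ and $p\in\H^s(\O)$, completing the argument.

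The part requiring care is the determination and positivity of the exponent $s$, which is the genuine content of the cited theory rather than of the reduction itself: the admissible $s$ is dictated by the corner geometry of $\O$, so that for a convex polygon one may take $s$ up to $1$ (full $\H^2\times\H^1$ regularity), whereas a reentrant corner forces $s<1$. For the present statement all that is needed is that $s$ can be chosen strictly positive, which the regularity theorems guarantee for any polygonal Lipschitz $\O$. A minor related point is that, since one only knows a priori that $\bFF\in\L^2(\O)^2$, there is no bootstrapping available beyond this corner-limited exponent; this is immaterial here, as the theorem asserts merely the existence of some positive $s$.
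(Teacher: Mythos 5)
Your proposal is correct and matches the paper's treatment: the paper itself gives no argument beyond citing \cite{MR975121,MR1600081}, and the standard way those references apply is exactly your reduction — view the eigenpair as solving a Stokes source problem with load $\bFF=\lambda\bu\in\L^2(\O)^2$ and invoke the corner-dependent regularity (with $s>0$ for any polygon) for that source problem. Your closing remarks on the role of the corner angles and the absence of further bootstrapping are accurate and consistent with how the paper uses the exponent $s$ later on.
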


We observe that Theorem \ref{th:reg_velocity}, together with the first and second equations of \eqref{def:stokes_sup2} reveal that $\curl(\bsig)\in \H^{1+s}(\O)^2$ and $\bsig\in \mathbb{H}^{s}(\O)$, respectively. This additional regularity for the stress tensor is a key ingredient for the numerical approximation.

\begin{remark}\label{remark-del-uhat-sigmahat}
Note that the estimate
$$
\|\widehat{\bsig}\|_{s,\O}+\|\widehat{\bu}\|_{1+s,\Omega}\lesssim\|\boldsymbol{f}\|_{0,\Omega}
$$
holds. This allows us to conclude that $\bT$ is compact, where its spectrum  satisfies $\sp(\bT)=\{0\}\cup\{\mu_k\}_{k\in\mathbb{N}}$, where $\{\mu_k\}_{k\in\mathbb{N}}\in (0,1)$ is a sequence of real positive eigenvalues which converges to zero, repeated according their respective multiplicities.
\end{remark}

\section{The mixed finite element method}
\label{sec:mixed_fem}
In this section we introduce and analyze the mixed finite element method to approximate the 
eigenvalues and eigenfunctions of \eqref{ec1_kappa0}--\eqref{ec2_kappa0}. With this goal in mind, we begin by introducing a regular family of triangulations of $\O$ denoted by $\{\CT_h\}_{h>0}$. Let $h_T$ the diameter of a triangle $T$ of the triangulation and let us define $h:=\max\{h_T\,:\, T\in \CT_h\}$.

\subsection{The finite element spaces}
Let us introduce suitable spaces to approximate the stress, the velocity and pressure. For $\upsilon\geq 0$ and $B\subset\mathbb{R}^2$ being a subset of the plane we denote by $\textrm{P}_\upsilon(B)$ the space of polynomials of degree at most $\upsilon$ defined on $B$ and by $\widetilde{\textrm{P}}_\upsilon(B)$ the subspace of homogeneous polynomials of degree $\upsilon$. 

We consider the local Nédelec space of the first type and order $k\geq 0$,
$$
\mathbb{NED}_k^{(1)}(T):=\textrm{P}_{k}(T)^2\oplus\widetilde{\textrm{P}}_{k+1}(T)^2.
$$
Hence, the global Nédelec space of the first type is defined by
$$
\mathbb{NED}_k^{(1)}(\CT_h):=\left\{\btau\in \mathbb{H}(\curl,\Omega)\;:\;\btau\vert_T^\texttt{t}\in\mathbb{NED}_k^{(1)}(T),\,\forall T\in\CT_h \right\},
$$
where $\btau\vert_T^\texttt{t}$ must be understood as $(\tau_{i1},\tau_{i2})$, for $i=1,2$.

Similarly, the local Nédelec space of the second type and order $k+1$ is given by 
$$
\mathbb{NED}_{k+1}^{(2)}(T):=\textrm{P}_{k+1}(T)^{2}\,\,  \text{ with } k\geq0,
$$
whereas the corresponding global space is defined by
$$
\mathbb{NED}_{k+1}^{(2)}(\CT_h):=\left\{\btau\in \mathbb{H}(\curl,\Omega)\;:\;\btau\vert_T^\texttt{t}\in\mathbb{NED}_{k+1}^{(2)}(T),\,\forall T\in\CT_h \right\}.
$$

We also consider the the space of piecewise polynomials of degree at most $k$,
$$
\textrm{P}_k(\CT_h):=\{q\in\L^2(\O)\,:\, q|_T\in\textrm{P}_k(T)\,\,\forall T\in\CT_h\}.
$$

\noindent\begin{remark}
It is well known from the literature that  $\mathbb{RT}_{k-1}\subset \mathbb{BDM}_{k}\subset\mathbb{RT}_k$ for all $k\geq 1$ (see \cite[Section 2]{MR3097958}). This is important to notice since $\mathbb{NED}_k^{(1)}$ and $\mathbb{NED}_k^{(2)}$ are just rotated Raviart-Thomas and Brezzi-Douglas-Marini families, respectively. This allows us to conclude that 
the number of degrees of freedom per edge is the same for both finite elements. However, the number of internal degrees of freedom of  
$\mathbb{NED}_{k}^{(2)}$ elements is  less than that of standard finite elements of the same order such as $\mathbb{NED}_k^{(1)}$. A count of the internal degrees of freedom for in two dimensions gives 
$$
\mathbb{NED}_k^{(2)} :2(k-1)(k+1)\qquad\mathbb{NED}_k^{(1)} :2k(k+1).
$$
\end{remark}
\subsection{Approximation errors} 
\label{subsec:app}
In the following, some approximation results for discrete spaces are presented. To make matters precise, since we consider two spaces to approximate the stress tensor, we need to introduce suitable interpolators for the Nédelec spaces defined above. We begin with the classical approximation property for piecewise polynomials   (see \cite{MR3097958}). Let $\mathcal R_h:\LO^2\rightarrow \textrm{P}_{k}(\CT_h)^2$. The following estimate  holds
$$
\|\bv-\mathcal R_h\bv\|_{0,\O}\lesssim h^{\min\{t,k+1\}}\|\bv\|_{t,\O}\qquad\forall \bv\in\H^t(\O)^{2}\cap\LO^2.
$$
%

Let $\bPi_h^{\mathbb{NED}^{(\ell)}}:\mathbb{H}(\Omega)^t\rightarrow \mathbb{NED}_{\ell+k-1}^{(\ell)}$, with $t>1/2$, be the tensorial Nédelec interpolation operator (see \cite[Section 5.5]{monk2003finite}), where the superindex $\ell\in\{1,2\}$ represents any of the N\'edelec families that we are considering.

The following commuting diagram property holds
\begin{equation}
\label{ned1_diagram}
\curl(\bPi_h^{\mathbb{NED}^{(\ell)}}(\btau))=\mathcal{R}_h(\curl(\btau)).
\end{equation}
Moreover, the following estimate holds (see \cite[Theorem 2]{MR592160} and \cite[Proposition 3]{MR864305})
\begin{equation}
\label{asympRT}
\norm{\btau - \bPi_h^{\mathbb{NED}^{(\ell)}} \btau}_{0,\O} \lesssim h^{\min\{t, \ell+k\}} \norm{\btau}_{t,\O} \qquad \forall \btau \in \mathbb{H}^t(\O), \quad t\geq 1-(\ell-1)/2.
\end{equation}
Also,  thanks to \eqref{ned1_diagram}, if $\curl(\btau)\in\mathrm{H}^t(\O)^{2}$ with $t\geq 0$ we have the following result 
\begin{equation}\label{asympdivRT}
\norm{\curl (\btau - \bPi_h^{\mathbb{NED}^{(\ell)}} \btau) }_{0,\O}=\norm{\curl(\btau) - \mathcal R_h (\curl(\btau) ) }_{0,\O}   
\lesssim h^{\min\{t, k+1\}} \norm{\curl(\btau)}_{t,\O}.
\end{equation}

Defining $\bPi_h^{\mathbb{NED}^{(\ell)}}$ as  $\bPi_h^{\mathbb{NED}^{(\ell)}}: \mathbb{H}^t(\O)\cap \mathbb{H}(\curl,\O) \to\mathbb{NED}_k^{(\ell)}$ for all $t\in (0, 1-(\ell-1)/2]$, the following estimate holds (see \cite[Theorem 5.41]{monk2003finite} and \cite[Proposition 3]{MR864305})
\begin{equation}\label{asymp00RT}
\norm{\btau - \bPi_h^{\mathbb{NED}^{(\ell)}} \btau}_{0,\O} \lesssim h^t (\norm{\btau}_{t,\O}
+ \norm{\curl(\btau)}_{0,\O})  \quad \btau \in \mathbb{H}^t(\O)\cap \mathbb{H}(\curl,\O). 
\end{equation}

For $\ell\in\{1,2\}$, we introduce the following spaces
$$
\mathbb{H}_{0,h}:=\left\{\btau\in\mathbb{NED}_{\ell+k-1}^{(\ell)} \,:\,\,\int_{\Omega}\btau_h:\mathbb{J}=0 \right\},\qquad \mathbf{Q}_h:=\textrm{P}_k(\CT_h)^2.
$$

\subsection{Discrete eigenvalue problems}
In what follows, we present the finite element discretization  of the spectral problem 
\eqref{ec1_kappa0}--\eqref{ec2_kappa0}. With the finite element spaces defined previously, we have the following discrete problem: Find $\lambda_h$ and $\0 \neq (\bsig_h,\bu_h)\in\mathbb{H}_{0,h}\times\mathbf{Q}_h$ such that
\begin{align}
\label{ec1_h}a(\bsig_h,\btau_h)+b(\btau_h,\bu_h)&=0\,\,\,\,\quad\quad\quad\,\,\qquad\,\,\,\forall \btau_h\in\mathbb{H}_{0,h},\\
\label{ec2_h}b(\bsig_h,\bv_h)&=-\lambda_h(\bu_h,\bv_h)_{0,\O}\quad\forall\bv_h\in \mathbf{Q}_h.
\end{align}
%

Now our interest is to analyze the well posedness of \eqref{ec1_h}--\eqref{ec2_h}. With this purpose, we begin with the following discrete inf-sup  for $b(\cdot,\cdot)$, whose proof is inspired by \cite[Lemma 3.2]{MR2594823}.
\begin{lemma}\label{lm:inf-sup_h}
There exists a positive constant $\widehat{\beta}$, independent of $h$, such that 
\begin{equation}
\label{eq:inf_sup_disc}
\displaystyle\sup_{\boldsymbol{0}\neq\btau_h\in\mathbb{H}_{0,h}}\frac{\displaystyle\int_{\O}\bv_h\cdot\curl(\btau_h)}{\|\btau_h\|_{\curl,\O}}\geq\widehat{\beta}\|\bv\|_{0,\O}\quad\forall\bv_h\in \mathbf{Q}_h.
\end{equation}
\end{lemma}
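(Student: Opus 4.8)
The plan is to prove \eqref{eq:inf_sup_disc} by the classical Fortin argument: I will exhibit, for each fixed $\bv_h\in\mathbf{Q}_h$, a function $\btau_h\in\mathbb{H}_{0,h}$ that realizes the supremum up to a constant, by transferring the continuous inf-sup condition \eqref{eq:inf_sup} to the discrete level through the Nédelec interpolant and the commuting diagram \eqref{ned1_diagram}. The mechanism I want is a uniformly bounded operator that maps a suitable continuous preimage of $\bv_h$ into $\mathbb{H}_{0,h}$ while preserving the value of $b(\cdot,\bv_h)$. The two ingredients that make this work are the commuting property $\curl(\bPi_h^{\mathbb{NED}^{(\ell)}}\btau)=\mathcal{R}_h(\curl\btau)$ and the fact that $\curl$ of a Nédelec function of order $\ell+k-1$ lands exactly in $\mathbf{Q}_h=\textrm{P}_k(\CT_h)^2$, so that $\mathcal{R}_h\bv_h=\bv_h$.

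The crux, and the main obstacle, is that the Nédelec interpolation operator is \emph{not} defined on all of $\mathbb{H}(\curl,\O)$, so I cannot simply interpolate the preimage furnished by the abstract inf-sup \eqref{eq:inf_sup}; I need a preimage carrying a bit of extra Sobolev regularity. To produce one, I would solve for $i=1,2$ the auxiliary Poisson problems $\Delta w_i=(\bv_h)_i$ in $\O$ with $w_i=0$ on $\partial\O$, and define $\btau$ row-wise by $(\tau_{i1},\tau_{i2}):=\bigl(-\partial w_i/\partial x_2,\ \partial w_i/\partial x_1\bigr)$. A direct computation then gives $\curl(\btau)=\bv_h$. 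By elliptic regularity on the polygonal domain $\O$ one has $w_i\in\H^{1+s}(\O)$ for some $s>0$, hence $\btau\in\mathbb{H}^{s}(\O)\cap\mathbb{H}(\curl,\O)$ together with the bound $\|\btau\|_{s,\O}+\|\curl(\btau)\|_{0,\O}\lesssim\|\bv_h\|_{0,\O}$. With this regularity in hand, estimate \eqref{asymp00RT} with $t=\min\{s,\,1-(\ell-1)/2\}>0$ guarantees that $\bPi_h^{\mathbb{NED}^{(\ell)}}\btau$ is well defined and that $\|\bPi_h^{\mathbb{NED}^{(\ell)}}\btau\|_{\curl,\O}\lesssim\|\bv_h\|_{0,\O}$, where the control of the $\curl$ part uses that $\mathcal{R}_h$ is the $\L^2$-projection and is therefore stable.

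Finally I would assemble the Fortin function and correct it for the zero-mean constraint. Since $\mathbb{J}$ is a constant tensor, it belongs to $\mathbb{NED}_{\ell+k-1}^{(\ell)}$ and satisfies $\curl(\mathbb{J})=\boldsymbol{0}$; hence setting $\btau_h:=\bPi_h^{\mathbb{NED}^{(\ell)}}\btau-c\,\mathbb{J}$ with $c:=\tfrac{1}{2|\O|}\int_\O \bPi_h^{\mathbb{NED}^{(\ell)}}\btau:\mathbb{J}$ yields $\btau_h\in\mathbb{H}_{0,h}$, leaves $\curl(\btau_h)$ unchanged, and keeps $\|\btau_h\|_{\curl,\O}\lesssim\|\bv_h\|_{0,\O}$. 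Using \eqref{ned1_diagram} and $\mathcal{R}_h\bv_h=\bv_h$ gives $\curl(\btau_h)=\mathcal{R}_h(\curl\btau)=\bv_h$, so that $b(\btau_h,\bv_h)=\int_\O\bv_h\cdot\curl(\btau_h)=\|\bv_h\|_{0,\O}^2$. Dividing by $\|\btau_h\|_{\curl,\O}\lesssim\|\bv_h\|_{0,\O}$ then delivers $\sup_{\btau_h}b(\btau_h,\bv_h)/\|\btau_h\|_{\curl,\O}\geq\widehat{\beta}\,\|\bv_h\|_{0,\O}$ with $\widehat{\beta}>0$ independent of $h$, which is \eqref{eq:inf_sup_disc}. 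I expect the regularity construction of the smooth preimage, and verifying that the interpolation bound \eqref{asymp00RT} applies uniformly for both families $\ell\in\{1,2\}$, to be the delicate part; the commuting-diagram computation and the $\mathbb{J}$-correction are routine once that is in place.
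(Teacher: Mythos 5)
Your proof is correct, and it reaches \eqref{eq:inf_sup_disc} by a route that genuinely differs from the paper's in two respects. First, the paper does not build a discrete preimage of each $\bv_h$; it constructs a Fortin operator $\mathcal{F}_h:\mathbb{H}_0\rightarrow\mathbb{H}_{0,h}$ (the Nédelec interpolant of $\underline{\curl}(\boldsymbol{z})$ minus its $\mathbb{J}$-component), shows it preserves $b(\cdot,\bv_h)$ and is uniformly bounded, and then invokes the continuous inf-sup condition \eqref{eq:inf_sup}; you bypass \eqref{eq:inf_sup} entirely by exhibiting $\btau_h$ with $\curl(\btau_h)=\bv_h$ and $\|\btau_h\|_{\curl,\O}\lesssim\|\bv_h\|_{0,\O}$, which verifies the discrete inf-sup from scratch and is in that sense more self-contained. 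Second, and more substantively, the paper poses the auxiliary Poisson problem on a \emph{convex} polygon $\widetilde{\O}\supseteq\O$ with datum $\curl(\btau)$ extended by zero, which buys full $\H^2$ regularity for $\boldsymbol{z}$ and hence $\underline{\curl}(\boldsymbol{z})\in\mathbb{H}^1(\O)$, so that the interpolant is covered by the standard estimate \eqref{asympRT} with $t=1$; you solve on $\O$ itself, where a non-convex corner limits you to $\H^{1+s}$ regularity, and you must therefore lean on the low-regularity estimate \eqref{asymp00RT} to define and bound $\bPi_h^{\mathbb{NED}^{(\ell)}}\btau$. Within the paper's framework this is legitimate — \eqref{asymp00RT} is stated precisely for $t\in(0,1-(\ell-1)/2]$, and your field is in the favorable situation where $\curl(\btau)=\bv_h$ is piecewise polynomial (moreover, on a polygon one even has $s>1/2$) — but it rests on a more delicate interpolation result than the paper needs; the convex-enlargement trick is exactly what lets the paper avoid it. Apart from this, the two arguments share the same skeleton: vector Poisson problem, rotated gradient $\underline{\curl}(\cdot)$, the commuting diagram \eqref{ned1_diagram} combined with the $\L^2$-projection identity $\mathcal{R}_h\bv_h=\bv_h$, and the constant-$\mathbb{J}$ correction to land in $\mathbb{H}_{0,h}$.
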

\begin{proof}
Since we already have the continuous inf-sup condition \eqref{eq:inf_sup}, it will be enough to construct a Fortin operator to guarantee that $b(\cdot,\cdot)$ satisfies a discrete inf-sup condition. Indeed, let $\widetilde{\O}$ be  a convex polygonal domain such that $\O\subseteq\widetilde{\O}$. Given $\btau\in\mathbb{H}_0$, let $\boldsymbol{z}\in \H_0^1(\widetilde{\O})^2$ be the unique solution to the boundary value problem
\begin{equation}\label{eq:aux_inf}
\Delta\boldsymbol{z}=\left\{\begin{aligned}
&\curl(\btau),\quad\text{ in }\O\\
& 0, \quad \text{ in } \widetilde{\O}\backslash\O
\end{aligned}\right.,\quad \boldsymbol{z}=0,\;\text{ in } \partial\widetilde{\O}.
\end{equation}
Standard elliptic regularity results, states that the solution of \eqref{eq:aux_inf} is such that  $\boldsymbol{z}\in\H^2(\O)^{2}$ and the estimate
$$
\Vert \boldsymbol{z}\Vert_{2,\Omega}\lesssim \Vert \curl(\btau)\Vert_{0,\O},
$$
where the hidden constant depends on the domain, holds. Note that $\underline{\curl}(\boldsymbol{z})\in \mathbb{H}^1(\O)$ and  $\curl(\underline{\curl}(\boldsymbol{z}))=\Delta\boldsymbol{z}=\curl(\btau)$ in $\O$. Moreover,  we have
\begin{equation}
\label{inf-sup-discreta-001}
\Vert \underline{\curl}(\boldsymbol{z})\Vert_{1,\O}\leq \Vert\boldsymbol{z}\Vert_{2,\O}\leq \Vert \curl(\btau)\Vert_{0,\O}.
\end{equation}

Define the operator $\mathcal{F}_h:\mathbb{H}_0\rightarrow\mathbb{H}_{0,h}$ that maps  $\btau\in\mathbb{H}_0$ into its $\mathbb{H}_0$-component of $\bPi_h^{\mathbb{NED}^{(\ell)}}(\underline{\curl}(\boldsymbol{z}))$, which is determined by the decomposition \eqref{descomposicion-H0}. More precisely, we have
\begin{equation*}
\mathcal{F}_h\btau:=\bPi_h^{\mathbb{NED}^{(\ell)}}(\underline{\curl}(\boldsymbol{z})) - \left(\frac{1}{2\vert\O\vert}\int_{\O}\bPi_h^{\mathbb{NED}^{(\ell)}}(\underline{\curl}(\boldsymbol{z})):\mathbb{J}\right)\mathbb{J}.
\end{equation*}
The above, together with \eqref{ned1_diagram},  allows us to obtain
$$
\curl(\mathcal{F}_h\btau)=\curl(\bPi_h^{\mathbb{NED}^{(\ell)}}(\underline{\curl}(\boldsymbol{z})))=\mathcal{R}_h(\curl(\underline{\curl}(\boldsymbol{z}))=\mathcal{R}_h\curl(\btau).\;
$$
Applying this equivalence, we  deduce
\begin{equation}
\label{inf-sup-discreta002}
b(\mathcal{F}_h\btau,\bv_h)=\int_{\O}\boldsymbol{v}_h\cdot\curl(\mathcal{F}_h\btau)=\int_{\O}\boldsymbol{v}_h\cdot\mathcal{R}_h\curl(\btau)=\int_{\O}\boldsymbol{v}_h\cdot\curl(\btau)=b(\btau,\boldsymbol{v}_h),
\end{equation}
for all $\btau\in\mathbb{H}_0$ and for all $\boldsymbol{v}_h\in \mathbf{Q}_h$. 

On the other hand,  from the stability of the decomposition \eqref{descomposicion-H0}, \eqref{asympRT}  and \eqref{inf-sup-discreta-001}, we have that
\begin{align*}
\Vert \mathcal{F}_h\btau\Vert_{\curl,\O}^2&\leq
\Vert \bPi_h^{\mathbb{NED}^{(\ell)}}\underline{\curl}(\boldsymbol{z})\Vert_{0,\O}^2 + \Vert \curl(\bPi_h^{\mathbb{NED}^{(\ell)}}\underline{\curl}(\boldsymbol{z}))\Vert_{0,\O}^2\\
&= \Vert \bPi_h^{\mathbb{NED}^{(\ell)}}\underline{\curl}(\boldsymbol{z})\Vert_{0,\O}^2+\Vert \mathcal{R}_h\curl(\btau)\Vert_{0,\O}^2\\
&\leq \Vert \underline{\curl}(\boldsymbol{z})-\bPi_h^{\mathbb{NED}^{(\ell)}}\underline{\curl}(\boldsymbol{z})\Vert_{0,\O}^2
+\Vert \underline{\curl}(\boldsymbol{z})\Vert_{0,\O}^2+\Vert\curl(\btau)\Vert_{0,\O}^2\\
&\lesssim \Vert\curl(\btau)\Vert_{0,\O}^2,
\end{align*}
for all $\btau\in\mathbb{H}_0$. Hence $\mathcal{F}_h$ is uniformly bounded. This, along with \eqref{inf-sup-discreta002} imply that $\mathcal{F}_h$ is a  Fortin operator. This concludes the proof. 
\end{proof}

Let us introduce the discrete kernel of $b(\cdot,\cdot)$, defined by
$$
\mathbb{V}_h:=\{\btau_h\in \mathbb{H}_{0,h}\,:\, b(\btau_h,\bv_h)=0\,\,\forall\bv_h\in\mathbf{Q}_h\}=\{\btau_h\in \mathbb{H}_{0,h}\,:\,\curl(\btau_h)=\boldsymbol{0}\,\,\,\text{in}\,\,\O\}.
$$
It is easy to check $a(\cdot,\cdot)$ is coercive in $\mathbb{V}_h$. Indeed, given $\btau_h\in\mathbb{H}_{0,h}$ we have
$$
a(\btau_h,\btau_h)=\frac{1}{\mu}\|\btau_h^r\|_{0,\O}^2\geq \frac{C^2}{\mu}\|\btau_h\|_{\curl,\O}^2,
$$
where $C$ is the constant of Lemma  \ref{lmm:bound_r}. With these ingredients at hand, we are in position to introduce the discrete solution operator associated to \eqref{ec1_h}--\eqref{ec2_h}
$$
\bT_h:\mathbf{Q}\rightarrow \mathbf{Q}_h,\qquad\boldsymbol{f}\mapsto \bT_h\boldsymbol{f}:=\widehat{\bu}_h, 
$$
where $(\widehat{\bsig}_h,\widehat{\bu}_h)\in \mathbb{H}_{0,h}\times \mathbf{Q}_{h}$ is the solution of the following well posed source problem
(see \cite{MR3097958})
\begin{align}\label{reduced_disc_source1h}
a(\widehat{\bsig}_h,\btau_h)+b(\btau_h,\widehat{\bu}_h)&=0\,\,\,\,\quad\quad\quad\forall \btau_h\in\mathbb{H}_{0,h},\\\label{reduced_disc_source2h}
b(\widehat{\bsig}_h,\bv_h)&=-(\boldsymbol{f},\bv_h)\quad\forall\bv_h\in \mathbf{Q}_h.
\end{align} 
\section{Convergence and error estimates}
\label{sec:conv_error}
For the convergence analysis we take advantage of the compactness of the solution operator $\bT$ in order to obtain the convergence of $\bT_h$ to $\bT$ in norm, as $h$ goes to zero. To do this task, we resort to the well established theory of  \cite{MR1115235} for compact operators. 

We begin with the following approximation result
\begin{lemma}
\label{lm:app}
Let $\boldsymbol{f}\in\mathbf{Q}$.  Then,  the following estimate  holds
$$
\|(\bT-\bT_h)\boldsymbol{f}\|_{0,\O}\lesssim \|\widehat{\bsig}-\bPi_h^{\mathbb{NED}^{(\ell)}}(\widehat{\bsig})\|_{0,\O}+\|\widehat{\bu}-\mathcal{R}_h\widehat{\bu}\|_{0,\O},
$$
where the hidden constant are independent of $h$ and $\ell\in\{1,2\}$.
\end{lemma}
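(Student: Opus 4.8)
The plan is to exploit the conformity $\mathbb{H}_{0,h}\subset\mathbb{H}_0$ and $\mathbf{Q}_h\subset\mathbf{Q}$, which yields Galerkin orthogonality, and then to combine the discrete inf-sup condition \eqref{eq:inf_sup_disc} with the coercivity of $a(\cdot,\cdot)$ on the discrete kernel $\mathbb{V}_h$. Writing $\bT\boldsymbol{f}=\widehat{\bu}$ and $\bT_h\boldsymbol{f}=\widehat{\bu}_h$, the quantity to bound is $\|\widehat{\bu}-\widehat{\bu}_h\|_{0,\O}$. Subtracting \eqref{eq1_source}--\eqref{eq2_source} from \eqref{reduced_disc_source1h}--\eqref{reduced_disc_source2h} gives
$$
a(\widehat{\bsig}-\widehat{\bsig}_h,\btau_h)+b(\btau_h,\widehat{\bu}-\widehat{\bu}_h)=0,\qquad b(\widehat{\bsig}-\widehat{\bsig}_h,\bv_h)=0,
$$
for all $\btau_h\in\mathbb{H}_{0,h}$ and $\bv_h\in\mathbf{Q}_h$. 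I would first split $\|\widehat{\bu}-\widehat{\bu}_h\|_{0,\O}\le\|\widehat{\bu}-\mathcal R_h\widehat{\bu}\|_{0,\O}+\|\mathcal R_h\widehat{\bu}-\widehat{\bu}_h\|_{0,\O}$, where the first term already matches the target.

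For the second term I would apply \eqref{eq:inf_sup_disc} to $\mathcal R_h\widehat{\bu}-\widehat{\bu}_h\in\mathbf{Q}_h$ and analyze $b(\btau_h,\mathcal R_h\widehat{\bu}-\widehat{\bu}_h)$. Adding and subtracting $\widehat{\bu}$, the piece $b(\btau_h,\mathcal R_h\widehat{\bu}-\widehat{\bu})=\int_{\O}(\mathcal R_h\widehat{\bu}-\widehat{\bu})\cdot\curl(\btau_h)$ vanishes, since $\curl(\btau_h)\in\mathbf{Q}_h$ and $\mathcal R_h$ is the $\L^2$-projection, while the remaining part reduces via the first orthogonality relation to $-a(\widehat{\bsig}-\widehat{\bsig}_h,\btau_h)$. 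Because $|a(\bxi,\btau)|\le\mu^{-1}\|\bxi^r\|_{0,\O}\|\btau^r\|_{0,\O}\le\mu^{-1}\|\bxi\|_{0,\O}\|\btau\|_{0,\O}$, the form only sees the $\L^2$-norm of the stress, and this yields $\|\mathcal R_h\widehat{\bu}-\widehat{\bu}_h\|_{0,\O}\lesssim\|\widehat{\bsig}-\widehat{\bsig}_h\|_{0,\O}$.

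It then remains to estimate $\|\widehat{\bsig}-\widehat{\bsig}_h\|_{0,\O}$ by the interpolation error. Let $\bxi_h$ denote the $\mathbb{H}_0$-component of $\bPi_h^{\mathbb{NED}^{(\ell)}}(\widehat{\bsig})$ under the decomposition \eqref{descomposicion-H0}, so that $\bxi_h\in\mathbb{H}_{0,h}$; since $\widehat{\bsig}\in\mathbb{H}_0$, the subtracted $\mathbb{R}\mathbb{J}$-part is itself controlled by $\|\widehat{\bsig}-\bPi_h^{\mathbb{NED}^{(\ell)}}(\widehat{\bsig})\|_{0,\O}$, whence $\|\widehat{\bsig}-\bxi_h\|_{0,\O}\lesssim\|\widehat{\bsig}-\bPi_h^{\mathbb{NED}^{(\ell)}}(\widehat{\bsig})\|_{0,\O}$. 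The crucial point is that $\bxi_h-\widehat{\bsig}_h\in\mathbb{V}_h$: from \eqref{eq2_source} one has $\curl(\widehat{\bsig})=-\boldsymbol{f}$, so the commuting diagram \eqref{ned1_diagram} gives $\curl(\bxi_h)=\curl(\bPi_h^{\mathbb{NED}^{(\ell)}}(\widehat{\bsig}))=\mathcal R_h(\curl(\widehat{\bsig}))=-\mathcal R_h\boldsymbol{f}$ (the subtracted constant multiple of $\mathbb{J}$ has vanishing curl), which coincides with $\curl(\widehat{\bsig}_h)$ read off from \eqref{reduced_disc_source2h}. Testing the first orthogonality relation with $\btau_h=\bxi_h-\widehat{\bsig}_h\in\mathbb{V}_h$ kills the $b$-term, and coercivity of $a(\cdot,\cdot)$ on $\mathbb{V}_h$ (with constant from Lemma \ref{lmm:bound_r}) gives $\|\bxi_h-\widehat{\bsig}_h\|_{\curl,\O}\lesssim\|\widehat{\bsig}-\bxi_h\|_{0,\O}$; a triangle inequality then yields $\|\widehat{\bsig}-\widehat{\bsig}_h\|_{0,\O}\lesssim\|\widehat{\bsig}-\bPi_h^{\mathbb{NED}^{(\ell)}}(\widehat{\bsig})\|_{0,\O}$, and collecting the three bounds closes the estimate.

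I expect the main obstacle to be keeping every right-hand side term in the $\L^2$-norm rather than the full $\curl$-norm, as the statement demands. This hinges on two facts to be used with care: the $\L^2$-continuity of $a(\cdot,\cdot)$ noted above, and the combination of the commuting diagram \eqref{ned1_diagram} with $\curl(\widehat{\bsig})=-\boldsymbol{f}$, which forces $\bxi_h-\widehat{\bsig}_h$ into $\mathbb{V}_h$ so that no uncontrolled $\curl$ contribution of the interpolation error survives. The $\mathbb{R}\mathbb{J}$-projection needed to land in $\mathbb{H}_{0,h}$ is a minor technical point, handled exactly as in the Fortin construction of Lemma \ref{lm:inf-sup_h}.
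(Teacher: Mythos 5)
Your proposal is correct and follows essentially the same route as the paper's proof: the same initial splitting via $\mathcal R_h\widehat{\bu}$, the same use of the discrete inf-sup condition combined with the $\L^2$-orthogonality of $\mathcal R_h$ and the $\L^2$-continuity of $a(\cdot,\cdot)$, and the same commuting-diagram argument placing the discrete stress difference in $\mathbb{V}_h$ followed by coercivity there. The only deviation is your explicit projection of $\bPi_h^{\mathbb{NED}^{(\ell)}}(\widehat{\bsig})$ onto its $\mathbb{H}_0$-component, a technical point the paper glosses over by asserting $\bPi_h^{\mathbb{NED}^{(\ell)}}(\widehat{\bsig})-\widehat{\bsig}_h\in\mathbb{H}_{0,h}$ directly; your treatment is a harmless, slightly more careful refinement of the same argument.
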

\begin{proof}
Let $\boldsymbol{f}\in\mathbf{Q}$ be such that  $\bT\boldsymbol{f}=\widehat{\bu}$ and $\bT_h\boldsymbol{f}=\widehat{\bu}_h$ where $\widehat{\bu}$ is the solution of \eqref{eq1_source}--\eqref{eq2_source} and $\widehat{\bu}_h$ is the solution of \eqref{reduced_disc_source1h}--\eqref{reduced_disc_source2h}, we have 
\begin{equation}
\label{eq_firsteqP1}
\displaystyle \|(\bT-\bT_h)\boldsymbol{f}\|_{0,\O}=\|\widehat{\bu}-\widehat{\bu}_h\|_{0,\O}\leq \|\widehat{\bu}-\mathcal R_h\widehat{\bu}\|_{0,\O}+ \|\mathcal R_h\widehat{\bu}-\widehat{\bu}_{h}\|_{0,\O}.
\end{equation}
Now our task is to control each of the terms on the right hand side of \eqref{eq_firsteqP1}. We begin with the second term.  Invoking the discrete inf-sup condition \eqref{eq:inf_sup_disc}, and setting  $\bv_{h}:=\mathcal{R}_h\widehat{\bu}-\widehat{\bu}\in \mathbf{Q}_h^{\bu}$, we obtain
$$
\|\mathcal{R}_h\widehat{\bu}-\widehat{\bu}_{h}\|_{0,\O}\leq \dfrac{1}{\beta}\displaystyle\sup_{\boldsymbol{0}\neq\btau_h\in \mathbb{H}_{0,h}}\frac{\displaystyle\int_{\Omega}\curl(\btau_h)\cdot(\mathcal{R}_h\widehat{\bu}-\widehat{\bu}_{h})}{\|\btau_h\|_{\curl,\O}}.
$$
Clearly  $\curl(\btau_{h})\in \mathbf{Q}_h$. Then, since $\mathcal{R}_h$ is the $\L^2(\O)$-orthogonal projector, and invoking \eqref{eq1_source} and   \eqref{reduced_disc_source1h},  straightforward calculations reveal
$$
b(\btau_h,\mathcal{R}_h\widehat{\bu}-\widehat{\bu}_{h})=b(\btau_h,\widehat{\bu})-b(\btau_h,\widehat{\bu}_{h})=a(\widehat{\bsig}_h,\btau_h)-a(\widehat{\bsig},\btau_h)
\lesssim \|\widehat{\bsig}_h-\widehat{\bsig}\|_{0,\O}\|\btau_h\|_{0,\O},
$$
and 
\begin{equation}
\label{eq_firsteqP2}
\|\mathcal{R}_h\widehat{\bu}-\widehat{\bu}_{h}\|_{0,\O}\lesssim \|\widehat{\bsig}_h-\widehat{\bsig}\|_{0,\O}.
\end{equation}
From the triangle inequality we have
\begin{equation}
\label{eq_firsteqP3}
\|\widehat{\bsig}-\widehat{\bsig}_{h}\|_{0,\O}\leq \|\widehat{\bsig}-\bPi_h^{\mathbb{NED}^{(\ell)}}(\widehat{\bsig})\|_{0,\O}+\|\bPi_h^{\mathbb{NED}^{(\ell)}}(\widehat{\bsig})-\widehat{\bsig}_{h}\|_{0,\O}
\end{equation}
Now, using that $\bPi_h^{\mathbb{NED}^{(\ell)}}(\widehat{\bsig})-\widehat{\bsig}_{h}\in \mathbb{H}_{0,h}$, the commuting diagram property \eqref{ned1_diagram}, together with \eqref{eq2_source} and  \eqref{reduced_disc_source2h}, we obtain $ \curl(\bPi_h^{\mathbb{NED}^{(\ell)}}$$(\widehat{\bsig}))=$$\mathcal{R}_h(\curl(\widehat{\bsig}))$$
=\mathcal{R}_h(-\boldsymbol{f})$$=\curl(\widehat{\bsig}_{h}),$
implying  directly that  
$\curl\left(\bPi_h^{\mathbb{NED}^{(\ell)}}(\widehat{\bsig})-\widehat{\bsig}_{h}\right)\in \mathbb{V}_h$. Since  $a_0(\cdot,\cdot)$ is $\mathbb{V}_h$-elliptic, there exists $\widehat{\alpha}>0$ such that 
$$
\widehat{\alpha}\|\bPi_h^{\mathbb{NED}^{(\ell)}}(\widehat{\bsig})-\widehat{\bsig}_{h}\|_{0,\O}^{2}\lesssim\| \bPi_h^{\mathbb{NED}^{(\ell)}}(\widehat{\bsig})-\widehat{\bsig}\|_{0,\O}\|\bPi_h^{\mathbb{NED}^{(\ell)}}(\widehat{\bsig})-\widehat{\bsig}_{h}\|_{0,\O}.
$$
These calculations imply that 
\begin{equation}
\label{eq_firsteqP4}
\|\bPi_h^{\mathbb{NED}^{(\ell)}}(\widehat{\bsig})-\widehat{\bsig}_{h}\|_{0,\O}
\lesssim \|\bPi_h^{\mathbb{NED}^{(\ell)}}(\widehat{\bsig})-\widehat{\bsig}\|_{0,\O}.
\end{equation}
Then, from  \eqref{eq_firsteqP1}, \eqref{eq_firsteqP2}, \eqref{eq_firsteqP3} and \eqref{eq_firsteqP4}, we have
$$
\|(\bT-\bT_h)\boldsymbol{f}\|_{0,\O} \lesssim \|\widehat{\bu}-\mathcal{R}_h\widehat{\bu}\|_{0,\O}+\|\bPi_h^{\mathbb{NED}^{(\ell)}}(\widehat{\bsig})-\widehat{\bsig}\|_{0,\O}.
$$
where the hidden constant is independent of $h$. This concludes the proof.
\end{proof}

It is important to remark that the previous result  is valid for both $\mathbb{NED}_k^{(1)}$ and $\mathbb{NED
}_{k+1}^{(2)}$ schemes, since the key ingredient to obtain the desire bound lies in the commutative diagram property that both elements satisfy.  Now,  with this result at hand, and following the proof of \cite[Corollary 4.2]{LRVSISC} together with \eqref{asympRT}--\eqref{asymp00RT}, for each finite element scheme, we have the following 
approximation result for the solution operators
\begin{equation}\label{eq:T-Th}
\|(\bT-\bT_h)\boldsymbol{f}\|_{0,\O}\lesssim h^{s}\|\boldsymbol{f}\|_{0,\O},
\end{equation}
where the hidden constant is independent of $h$.

Finally, all  the previous results, together with the application of the theory in \cite{MR0203473}, state that our numerical methods are spurious free, as is stated in the following result.
\begin{theorem}
\label{thm:spurious_free}
Let $V\subset\mathbb{C}$ be an open set containing $\sp(\bT)$. Then, there exists $h_0>0$ such that $\sp(\bT_h)\subset V$ for all $h<h_0$.
\end{theorem}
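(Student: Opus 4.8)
The plan is to read the approximation estimate \eqref{eq:T-Th} as a statement of convergence \emph{in operator norm}, namely $\|\bT-\bT_h\|_{\mathcal{L}(\mathbf{Q})}\lesssim h^{s}\to 0$ (both operators being viewed as maps $\mathbf{Q}\to\mathbf{Q}$, with $\bT_h$ having range in $\mathbf{Q}_h\subset\mathbf{Q}$), and then to invoke the classical fact that the spectrum is upper semicontinuous under norm convergence of operators. This is precisely the content of the abstract framework in \cite{MR0203473}, so one option is simply to quote it; below I sketch the self-contained resolvent argument that underlies it.

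First I would localize the discrete spectra. Since
$$
\|\bT_h\|_{\mathcal{L}(\mathbf{Q})}\leq\|\bT\|_{\mathcal{L}(\mathbf{Q})}+\|\bT-\bT_h\|_{\mathcal{L}(\mathbf{Q})}\leq M
$$
uniformly for all $h$ below some threshold $h_\ast$, every spectrum $\sp(\bT_h)$ is contained in the fixed closed disk $D:=\{z\in\mathbb{C}:|z|\leq M\}$. (The value $0$ may belong to $\sp(\bT_h)$ because of the kernel of $\bT_h$ on $\mathbf{Q}$, but $0\in\sp(\bT)\subset V$, so this is harmless.) It therefore suffices to rule out discrete eigenvalues in the compact set $K:=D\setminus V$, which lies in the resolvent set $\rho(\bT)$ because $\sp(\bT)\subset V$.

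The core step is a Neumann-series perturbation of the resolvent. On the compact set $K\subset\rho(\bT)$ the map $z\mapsto(zI-\bT)^{-1}$ is continuous, hence bounded, say $\sup_{z\in K}\|(zI-\bT)^{-1}\|_{\mathcal{L}(\mathbf{Q})}=:C_K<\infty$. For $z\in K$ I would use the factorization
$$
zI-\bT_h=(zI-\bT)\bigl[I-(zI-\bT)^{-1}(\bT_h-\bT)\bigr].
$$
Because $\|(zI-\bT)^{-1}(\bT_h-\bT)\|_{\mathcal{L}(\mathbf{Q})}\leq C_K\,\|\bT-\bT_h\|_{\mathcal{L}(\mathbf{Q})}<1$ once $h$ is small enough, the bracketed factor is invertible via its Neumann series, whence $zI-\bT_h$ is invertible and $z\in\rho(\bT_h)$. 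Thus, for all sufficiently small $h$, $K\cap\sp(\bT_h)=\emptyset$; combined with $\sp(\bT_h)\subset D$ this forces $\sp(\bT_h)\subset V$, which is the claim.

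The hard part is the uniformity in $z$: the smallness condition $C_K\,\|\bT-\bT_h\|<1$ must hold \emph{simultaneously} for every $z\in K$, and this is exactly where compactness of $K$ is essential, since it yields the single uniform resolvent bound $C_K$. Everything else—the uniform norm bound on $\bT_h$, the harmless role of $0$, and treating $\bT_h$ as an operator on all of $\mathbf{Q}$—is routine bookkeeping.
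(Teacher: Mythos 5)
Your proposal is correct and takes essentially the same approach as the paper: the paper derives the operator-norm convergence estimate \eqref{eq:T-Th} and then simply invokes the classical perturbation theory of \cite{MR0203473}, which is exactly the uniform-resolvent-bound plus Neumann-series argument you spell out. The only difference is that you make the cited argument self-contained (uniform bound $C_K$ on the compact set $K$, uniform bound on $\|\bT_h\|$, and the harmless role of $0\in\sp(\bT)$), which is fine but not a different route.
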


\subsection{A priori error estimates}
\label{sec:conv}
Now our aim is to obtain error estimates for the eigenfunctions and eigenvalues.
Let us remark that, according to \eqref{eq:T-Th}, if $\kappa \in (0,1)$ is an isolated eigenvalue of $\bT$ with multiplicity $m$, and $\mathcal{E}$ its associated eigenspace, then, there exist $m$
eigenvalues $\kappa_{h}^{(1)},...,\kappa_{h}^{(m)}$ of $\bT_{h}$, repeated according to their respective multiplicities, which converge to $\kappa$.
Let $\mathcal{E}_{h}$ be the direct sum of their corresponding associated eigenspaces (see \cite{MR0203473}) and let us define the  \textit{gap} $\hdel$ between two closed
subspaces $\CM$ and $\CN$ of $\L^2(\O)$ by
$$
\hdel(\CM,\CN)
:=\max\big\{\delta(\CM,\CN),\delta(\CN,\CM)\big\}, \text{ where } \delta(\CM,\CN)
:=\sup_{\underset{\left\|x\right\|_{0,\O}=1}{x\in\CM}}
\left(\inf_{y\in\CN}\left\|x-y\right\|_{0,\O}\right).
$$
With these definitions and hand, we derive the following error estimates for eigenfunctions and eigenvalues. Since the proof is direct from 
applying the  results of  \cite{MR1115235,MR1655512,MR1642801}, we do not incorporate further details.

\begin{theorem}
\label{millar2015}
For $k\geq0$, the following error estimates for the eigenfunctions and eigenvalues hold
$$
\widehat{\delta} ( \mathcal{E}, \mathcal{E}_{h} )\lesssim\,h^{ \min\{ s,k+1 \} } \quad \mbox{and} \quad | \mu-\mu_{h}(i) | \lesssim\,h^{ \min\{s,k+1 \} },
$$
where the hidden constants are independent of $h$.
\end{theorem}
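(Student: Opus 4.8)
The plan is to reduce the statement to the abstract spectral approximation theory for compact operators of \cite{MR1115235,MR1655512,MR1642801}. Since $\bT$ is compact and selfadjoint and, by \eqref{eq:T-Th}, $\bT_h\to\bT$ in the $\L^2$ operator norm as $h\to0$, that theory asserts that an isolated eigenvalue $\mu$ of $\bT$ of multiplicity $m$ is approximated by exactly $m$ discrete eigenvalues $\mu_h(1),\dots,\mu_h(m)$, and it controls both the gap and the eigenvalue error through the single quantity
$$
\gamma_h:=\sup_{\substack{\bv\in\mathcal{E}\\ \|\bv\|_{0,\O}=1}}\|(\bT-\bT_h)\bv\|_{0,\O},
$$
yielding $\hdel(\mathcal{E},\mathcal{E}_h)\lesssim\gamma_h$ and $|\mu-\mu_h(i)|\lesssim\gamma_h$. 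Thus the entire argument reduces to proving $\gamma_h\lesssim h^{\min\{s,k+1\}}$.

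To estimate $\gamma_h$, I would fix $\bv\in\mathcal{E}$ with $\|\bv\|_{0,\O}=1$ and set $\widehat{\bu}:=\bT\bv$ together with the associated stress $\widehat{\bsig}$. Because $\bv$ lies in the eigenspace, $\widehat{\bu}$ is a multiple of an eigenfunction, so Theorem \ref{th:reg_velocity} and Remark \ref{remark-del-uhat-sigmahat} supply the regularity $\widehat{\bu}\in\H^{1+s}(\O)^2$, $\widehat{\bsig}\in\mathbb{H}^{s}(\O)$ and $\curl(\widehat{\bsig})\in\H^{1+s}(\O)^2$, with norms bounded by $\|\bv\|_{0,\O}$. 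Inserting these into Lemma \ref{lm:app} leaves the two interpolation errors $\|\widehat{\bsig}-\bPi_h^{\mathbb{NED}^{(\ell)}}(\widehat{\bsig})\|_{0,\O}$ and $\|\widehat{\bu}-\mathcal{R}_h\widehat{\bu}\|_{0,\O}$. The velocity term is bounded by the $\L^2$-projection estimate, giving the order $h^{\min\{1+s,k+1\}}$, while the stress term is handled by \eqref{asympRT} when $s\geq1-(\ell-1)/2$ and by the low-order variant \eqref{asymp00RT} otherwise, giving the order $h^{\min\{s,\ell+k\}}$. Taking the smaller of the two exponents gives $\gamma_h\lesssim h^{\min\{s,k+1\}}$, uniformly in $\ell\in\{1,2\}$; substituting this into the two abstract bounds above finishes the proof.

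I expect the only delicate point to be the stress interpolation error in the low-regularity regime, where $s$ may be smaller than $1-(\ell-1)/2$ so that the standard N\'edelec estimate \eqref{asympRT} is unavailable. There one must resort to \eqref{asymp00RT}, whose right-hand side additionally requires $\curl(\widehat{\bsig})\in\L^2(\O)^2$; this is guaranteed by the regularity recorded above. Everything else amounts to bookkeeping of the exponents and a direct citation of the compact-operator perturbation theory, which is why the authors do not elaborate further.
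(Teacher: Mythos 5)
Your proposal is correct and follows essentially the same route as the paper, which simply invokes the compact-operator spectral approximation theory of \cite{MR1115235,MR1655512,MR1642801} without further detail: the gap and eigenvalue errors are controlled by $\sup_{\bv\in\mathcal{E},\,\|\bv\|_{0,\O}=1}\|(\bT-\bT_h)\bv\|_{0,\O}$, and the rate $h^{\min\{s,k+1\}}$ then follows from Lemma \ref{lm:app} combined with the extra regularity of eigenfunctions. Your elaboration of the omitted details --- in particular the restriction to the eigenspace, the use of Theorem \ref{th:reg_velocity} and Remark \ref{remark-del-uhat-sigmahat}, and the case distinction between \eqref{asympRT} and the low-regularity estimate \eqref{asymp00RT} for the N\'ed\'elec interpolation of the stress --- is exactly the bookkeeping the authors leave implicit.
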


Now  we improve the error estimate of  Theorem \ref{millar2015} for the eigenvalues, showing  that the order of convergence is in fact quadratic. This is contained in the following result.
\begin{theorem}
For $k\geq 0$, there exists a strictly positive constant $h_0$ such that, for $h<h_0$ there holds
$$
|\lambda-\lambda_h|\lesssim h^{2\min\{s,k+1\}},
$$
where the hidden constant is independent of $h$.
\end{theorem}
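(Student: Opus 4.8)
The plan is to pass to the reciprocal eigenvalues $\mu=1/\lambda$ and $\mu_h=1/\lambda_h$, which are eigenvalues of the selfadjoint operators $\bT$ and $\bT_h$ (that $\bT_h$ is selfadjoint follows from the symmetry of $a(\cdot,\cdot)$), and to invoke the spectral approximation theory of \cite{MR1115235,MR0203473}. For a selfadjoint problem this theory reduces the estimate of $|\mu-\mu_h|$ to two contributions: a \emph{gap} contribution controlled by $\|(\bT-\bT_h)|_{\mathcal{E}}\|^2$, which is already of order $h^{2\min\{s,k+1\}}$ by \eqref{eq:T-Th}, and a \emph{consistency} contribution of the form $((\bT-\bT_h)\bu,\bu)_{0,\O}$ for a normalized eigenfunction $\bu\in\mathcal{E}$. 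Since $\mu$ and $\mu_h$ are bounded away from zero (cf. Remark~\ref{remark-del-uhat-sigmahat} and Theorem~\ref{thm:spurious_free}), the identity $|\lambda-\lambda_h|=|\mu-\mu_h|/(\mu\mu_h)$ transfers any bound on $|\mu-\mu_h|$ to $|\lambda-\lambda_h|$. Thus the whole theorem hinges on showing that the consistency term is \emph{quadratic}, i.e. $|((\bT-\bT_h)\bu,\bu)_{0,\O}|\lesssim h^{2\min\{s,k+1\}}$.

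This is exactly the main obstacle, since the naive bound coming from \eqref{eq:T-Th} only gives order $h^{\min\{s,k+1\}}$. To gain the extra power I would use a duality argument exploiting the structure of the source problem when the data is an eigenfunction. Denoting by $(\widehat{\bsig},\widehat{\bu})$ and $(\widehat{\bsig}_h,\widehat{\bu}_h)$ the continuous and discrete solutions of \eqref{eq1_source}--\eqref{eq2_source} and \eqref{reduced_disc_source1h}--\eqref{reduced_disc_source2h} with $\boldsymbol{f}=\bu$, the eigenrelation gives $\widehat{\bu}=\bT\bu=\mu\bu$ and $\widehat{\bsig}=\mu\bsig$, so that $((\bT-\bT_h)\bu,\bu)_{0,\O}=(\widehat{\bu}-\widehat{\bu}_h,\bu)_{0,\O}$. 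The key structural facts are the two error equations, $a(\widehat{\bsig}-\widehat{\bsig}_h,\btau_h)+b(\btau_h,\widehat{\bu}-\widehat{\bu}_h)=0$ for all $\btau_h\in\mathbb{H}_{0,h}$ and $b(\widehat{\bsig}-\widehat{\bsig}_h,\bv_h)=0$ for all $\bv_h\in\mathbf{Q}_h$, together with the consequence of the commuting diagram \eqref{ned1_diagram} that $\curl(\widehat{\bsig}-\widehat{\bsig}_h)=-(I-\mathcal{R}_h)\bu$.

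Next I would test the continuous relation $b(\widehat{\bsig},\bv)=-(\bu,\bv)_{0,\O}$ with $\bv=\widehat{\bu}-\widehat{\bu}_h$ to write $(\bu,\widehat{\bu}-\widehat{\bu}_h)_{0,\O}=-b(\widehat{\bsig},\widehat{\bu}-\widehat{\bu}_h)$, insert the interpolant $\bPi_h^{\mathbb{NED}^{(\ell)}}(\widehat{\bsig})$ (whose $\mathbb{R}\mathbb{J}$ component is harmless, as both $b(\cdot,\cdot)$ and $a(\cdot,\cdot)$ annihilate $\mathbb{J}$), and use Galerkin orthogonality to replace the interpolated part by $a(\widehat{\bsig}-\widehat{\bsig}_h,\bPi_h^{\mathbb{NED}^{(\ell)}}(\widehat{\bsig}))$. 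After one further use of the first error equation (with test function $\widehat{\bsig}-\widehat{\bsig}_h$) and of $b(\widehat{\bsig}-\widehat{\bsig}_h,\mathcal{R}_h\widehat{\bu})=0$, every remaining term collapses into an $\L^2$ pairing of orthogonal-projection or interpolation errors of the \emph{smooth} fields $\bu$ and $\bsig$; the leading ones take the shape $\int_\O (I-\mathcal{R}_h)\widehat{\bu}\cdot(I-\mathcal{R}_h)\bu$ and $a(\widehat{\bsig}-\widehat{\bsig}_h,\widehat{\bsig}-\bPi_h^{\mathbb{NED}^{(\ell)}}\widehat{\bsig})$. Each is a product of two factors of order $h^{\min\{s,k+1\}}$, using the elliptic regularity $\bu\in\H^{1+s}(\O)^2$, $\bsig\in\mathbb{H}^s(\O)$ of Theorem~\ref{th:reg_velocity}, the a priori rate $\|\widehat{\bsig}-\widehat{\bsig}_h\|_{0,\O}\lesssim h^{\min\{s,k+1\}}$, and the approximation estimates of Subsection~\ref{subsec:app}.

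Finally I would assemble the pieces: the consistency term is $O(h^{2\min\{s,k+1\}})$, and together with the gap term this yields $|\mu-\mu_h|\lesssim h^{2\min\{s,k+1\}}$. For an eigenvalue of multiplicity $m$ the same conclusion holds for each $\mu_h(i)$ after writing $\bu_h=\bu+(\bu_h-\bu)$ with $\bu\in\mathcal{E}$ a best approximation of $\bu_h$: the resulting cross terms are products of $\|(\bT-\bT_h)\bu\|_{0,\O}$ and $\|\bu-\bu_h\|_{0,\O}$ (or of two eigenfunction errors), hence again quadratic by Theorem~\ref{millar2015} and the smooth-data rate for $\bT-\bT_h$. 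Converting back through $|\lambda-\lambda_h|=|\mu-\mu_h|/(\mu\mu_h)$ delivers $|\lambda-\lambda_h|\lesssim h^{2\min\{s,k+1\}}$. The crux—and the step I expect to require the most care—is the duality manipulation upgrading the consistency term from linear to quadratic order, keeping exact track of the $\mathbb{R}\mathbb{J}$ component of the stress and of the cancellations furnished by the commuting diagram and the two error equations.
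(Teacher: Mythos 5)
Your proposal is correct, but it takes a genuinely different route from the paper's proof. The paper argues algebraically: proceeding as in \cite[Lemma 4]{MR1722056}, it derives the exact identity $\lambda-\lambda_h=\frac{1}{\mu}\|\bsig^r-\bsig_h^r\|_{0,\O}^2-\lambda_h\|\bu-\bu_h\|_{0,\O}^2$ (with $\mu$ the viscosity), obtained by testing the continuous and discrete eigenproblems against one another's eigenfunctions; this yields at once $|\lambda-\lambda_h|\lesssim\|\bsig-\bsig_h\|_{0,\O}^2+\|\bu-\bu_h\|_{0,\O}^2$, and the quadratic rate then follows from the $O(h^{\min\{s,k+1\}})$ eigenfunction error bounds, obtained as in \cite[Theorem 4.6]{LRVSISC}. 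You instead stay at the operator level: the selfadjoint Babu\v{s}ka--Osborn estimate splits the eigenvalue error into a squared gap term plus the consistency term $((\bT-\bT_h)\bu,\bu)_{0,\O}$, which you upgrade to quadratic order by a duality argument built on the two error equations, the $\L^2$-orthogonality of $\mathcal{R}_h$, and the commuting-diagram identity $\curl(\widehat{\bsig}-\widehat{\bsig}_h)=-(I-\mathcal{R}_h)\bu$; these are precisely the cancellation mechanisms the paper itself exploits in Lemma \ref{lm:app}, so every ingredient you invoke is available. The trade-off: the paper's identity is shorter and needs no spectral theory beyond eigenfunction convergence, but it presupposes $\L^2$ rates for \emph{both} the stress and the velocity eigenfunction errors (the stress bound requiring its own argument); your route needs only source-problem estimates with smooth eigenfunction data and handles multiplicity systematically inside the operator framework. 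Two small corrections to your sketch: the gap term is not controlled by \eqref{eq:T-Th}, which concerns general $\L^2$ data and only gives $O(h^{s})$ --- the rate $\|(\bT-\bT_h)|_{\mathcal{E}}\|\lesssim h^{\min\{s,k+1\}}$ uses the extra regularity of eigenfunctions, exactly as in Theorem \ref{millar2015}; and your use of $\mu=1/\lambda$ collides with the paper's $\mu$ for the viscosity (the paper writes $\kappa=1/\lambda$), which is worth fixing since the viscosity enters explicitly in the bilinear form $a(\cdot,\cdot)$ and hence in the quadratic identity above.
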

\begin{proof}
Let $(\lambda,\bsig,\bu)$ be the solution of problem \eqref{eq1_source}--\eqref{eq2_source}, where its finite element approximation $(\lambda_{h},\bsig_{h},\bu_{h})$ corresponds to the solution of problem \eqref{ec1_h}--\eqref{ec2_h} with $\|\bu_{h}\|_{0,\O}=\|\bu\|_{0,\O}=1$. Proceeding as in \cite[Lemma 4]{MR1722056}, we deduce the following identity
$$
\lambda-\lambda_h=\dfrac{1}{\mu}\|\bsig^r-\bsig_h^r\|_{0,\O}^2-\lambda_h\|\bu-\bu_h\|_{0,\O}^2,
$$ 
implying that 
$$
|\lambda-\lambda_h|\lesssim\|\bsig-\bsig_h\|_{0,\O}^2+\|\bu-\bu_h\|_{0,\O}^2,
$$
where the hidden constant is independent of $h$.
The proof is complete using the same arguments of \cite[Theorem 4.6]{LRVSISC}.
\end{proof}

Since we have proved that our method does not introduce spurious eigenvalues, it is possible to conclude  that for $h$ small enough, except 
for $\lambda_h$, the rest of the eigenvalues of  \eqref{ec1_h}--\eqref{ec2_h} are well separated from $\lambda$, as  is stated in \cite{MR3647956}.

\begin{proposition}
\label{separa_eig}
Let us enumerate the eigenvalues of problems  \eqref{ec1_h}--\eqref{ec2_h} and \eqref{ec1}--\eqref{ec2} in increasing order as follows: $0<\lambda_1\leq\cdots\lambda_i\leq\cdots$ and 
$0<\lambda_{h,1}\leq\cdots\lambda_{h,i}\leq\cdots$. Let us assume  that $\lambda_J$ is a simple eigenvalue of \eqref{ec1_h}--\eqref{ec2_h}. Then, there exists $h_0>0$ such that
$$
|\lambda_J-\lambda_{h,i}|\geq\frac{1}{2}\min_{j\neq J}|\lambda_j-\lambda_J|\quad\forall i\leq \dim\mathbb{H}_h,\,\,i\neq J,\quad \forall h<h_0.
$$
\end{proposition}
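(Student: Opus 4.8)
The plan is to reduce the statement to the norm convergence $\bT_h\to\bT$ proved in \eqref{eq:T-Th} and then invoke the spectral approximation theory of \cite{MR1115235,MR0203473}, the point being to show that the \emph{only} discrete eigenvalue that can enter a fixed neighborhood of $\lambda_J$ is the one converging to it. Recall that $(\lambda,\bu)$ solves the continuous problem if and only if $\bT\bu=\kappa\bu$ with $\kappa:=1/\lambda$, and likewise for the discrete problem through $\bT_h$. Since $\lambda_J$ is simple, the gap $G:=\min_{j\neq J}|\lambda_j-\lambda_J|$ is strictly positive: the $\lambda_j$ are reciprocals of the eigenvalues $\kappa_j\in(0,1)$ of the compact operator $\bT$ (cf. Remark~\ref{remark-del-uhat-sigmahat}), which accumulate only at $0$, so each $\lambda_j$ is isolated.

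First I would fix a circle $\Gamma$ in the complex plane centered at $\kappa_J:=1/\lambda_J$ whose radius is strictly smaller than half the distance from $\kappa_J$ to the rest of $\sp(\bT)$, so that $\Gamma$ encloses $\kappa_J$ and no other point of $\sp(\bT)$. By \eqref{eq:T-Th} we have $\bT_h\to\bT$ in operator norm, so the associated Riesz spectral projectors converge, and by \cite{MR1115235} there is $h_0>0$ such that for $h<h_0$ the total multiplicity of the eigenvalues of $\bT_h$ enclosed by $\Gamma$ equals that of $\kappa_J$, namely one. Hence exactly one discrete eigenvalue $\kappa_{h,J}$ lies inside $\Gamma$, and by Theorem~\ref{millar2015} it satisfies $\kappa_{h,J}\to\kappa_J$, equivalently $\lambda_{h,J}\to\lambda_J$.

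Next I would use the non-pollution property of Theorem~\ref{thm:spurious_free}: for any open neighborhood $V$ of $\sp(\bT)$ one has $\sp(\bT_h)\subset V$ for $h$ small. Choosing $V$ whose only component meeting a neighborhood of $\kappa_J$ is the interior of $\Gamma$, no discrete eigenvalue other than $\kappa_{h,J}$ can approach $\kappa_J$, so every remaining $\kappa_{h,i}$ stays at a fixed positive distance from $\kappa_J$. Translating back through $\lambda=1/\kappa$, which is a diffeomorphism away from $0$ and is uniformly bi-Lipschitz on the relevant compact range of $\kappa$, this shows that for $h<h_0$ the interval $(\lambda_J-G/2,\lambda_J+G/2)$ contains the single discrete eigenvalue $\lambda_{h,J}$ and no other. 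Because the eigenvalues are listed in increasing order and multiplicities of all eigenvalues below $\lambda_J$ are likewise preserved, this unique interior eigenvalue carries the index $J$, whence $|\lambda_J-\lambda_{h,i}|\geq G/2=\tfrac12\min_{j\neq J}|\lambda_j-\lambda_J|$ for every $i\neq J$ with $i\leq\dim\mathbb{H}_h$, which is the assertion.

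The main obstacle is fusing two uniform-in-$h$ facts under a single threshold $h_0$: the multiplicity-preservation argument guarantees that at most one discrete eigenvalue tends to $\lambda_J$, but on its own it does not prevent a distant discrete eigenvalue from drifting into the gap; conversely, the no-pollution statement controls the global location of $\sp(\bT_h)$ but not the local count near $\lambda_J$. Reconciling the two thresholds and checking that the index bookkeeping in the increasing enumeration remains consistent is the delicate step, and it is exactly what the spectral-convergence framework of \cite{MR1115235,MR0203473,MR3647956} provides.
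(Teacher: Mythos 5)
Your proposal is correct and takes essentially the approach the paper intends: the paper states Proposition~\ref{separa_eig} without writing a proof, deferring to the spurious-free property of Theorem~\ref{thm:spurious_free} and the citation \cite{MR3647956}, and your argument (norm convergence \eqref{eq:T-Th} $\Rightarrow$ local preservation of multiplicity near $\kappa_J=1/\lambda_J$, non-pollution $\Rightarrow$ no other discrete eigenvalue enters the gap, plus the index bookkeeping under the reciprocal map) is precisely the standard reasoning behind that citation. The details you supply, including the counting of the $J-1$ discrete eigenvalues below $\lambda_J$ and the handling of discrete eigenvalues near $\kappa=0$ (which correspond to large $\lambda_{h,i}$ and are trivially far from $\lambda_J$), are sound.
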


In what follows, we assume that $\lambda$ is a simple eigenvalue and we normalize $\bu$ so that $\|\bu\|_{0,\O} = 1$. Then, for all $\CT_h$, there exists a solution $(\lambda_h, \boldsymbol{\sigma}_h,\bu_h)$ be a solution of problem \eqref{ec1_h}--\eqref{ec2_h} such that $\lambda_h\rightarrow\lambda$ as $h$ goes to zero and $\|\bu_h\|_{0,\O}=1$.

We conclude this section by presenting a summary of the approximation properties for functions and eigenvalues for the lowest order.
\begin{remark}
\label{lema:apriorie}
For $k=0$, if $(\lambda, \boldsymbol{\sigma},\bu)$ is the solution of Problem \eqref{ec1_kappa0}--\eqref{ec2_kappa0} with $\|\bu\|_{0,\O}=1$ and $(\lambda_h, \boldsymbol{\sigma}_h,\bu_h)$ is the solution of problem \eqref{ec1_h}--\eqref{ec2_h} with $\|\bu_h\|_{0,\O}=1$, then
$$
\|\boldsymbol{\sigma}-\boldsymbol{\sigma}_h\|_{0,\O}+\|\bu-\bu_h\|_{0,\O}\lesssim  h^{s}
\quad
\text{and}
\quad
|\lambda-\lambda_h|\lesssim \|\boldsymbol{\sigma}-\boldsymbol{\sigma}_h\|_{0,\O}^2+\|\bu-\bu_h\|_{0,\O}^2,
$$
where the hidden constant  independent of $h$.
\end{remark}

\section{A posteriori error analysis}
\label{sec:apost}
The aim of this section is to introduce   and analysis of  an a posteriori error estimator for our single eigenpair  of the mixed eigenvalue problem.
The main difficulty in the a posteriori error analysis for eigenvalue problems is to control the so called high order terms. To do this task, we adapt the results of \cite{MR3047040} in order to obtain a superconvergence result and hence, prove the desire estimates for our estimator. The results presented in this section are limited to the lower order case $k=0$, for which the required postprocessing operator is well defined.

\subsection{Properties of the mesh}
For $T\in\mathcal{T}_h$, let $\mathcal{E}(T)$ be the set of its edges, and let $\mathcal{E}_h$ be the set of all
the edges of the triangulation $\mathcal{T}_h$. With these definitions at hand, we write $\mathcal{E}_h:=\mathcal{E}_h(\O)\cup\mathcal{E}_h(\partial\O)$, where
$$
\mathcal{E}_h(\O):=\{e\in\mathcal{E}_h\,:\,e\subseteq\O\}\quad\text{and}\quad\mathcal{E}_h(\partial\O):=\{e\in\mathcal{E}_h\,:\, e\subseteq\partial\O\}.
$$
On the other hand, for each edge $e\in\mathcal{E}_h$ we fix a unit normal vector $\bn_e$ to $e$. Moreover, given $\btau\in\mathbb{L}^2(\Omega)$ and $e\in\mathcal{E}_h(\O)$, we let $\jumpp{\btau}$ be the corresponding normal jump across $e$, that is
$
\jumpp{\btau}:=(\btau|_T-\btau|_{T'})\big|_e\bn_e,
$
where $T$ and $T'$ are two elements of the triangulation with common edge $e$.

\subsection{Technical results}
We introduce some definitions and technical results that are necessary to perform the a posteriori analysis.  We begin with the following result that is an adaptation of those presented in  \cite[Lemma 9, Lemma 10, Lemma 11]{MR3712172}. For briefty we skip the details.

\begin{corollary}
For the eigenfunction approximation $\bu_h$ of the eigenvalue problem \eqref{ec1_kappa0}--\eqref{ec2_kappa0},  the following supercloseness result holds when the mesh size $h$ is small enough,
$$
\|\bu_h-\mathcal R_h\bu\|_{0,\O}\lesssim h^s(\|\bsig-\bsig_h\|_{0,\O}+\|\bu-\bu_h\|_{0,\O}),
$$
where the hidden constant is independent of $h$.
\end{corollary}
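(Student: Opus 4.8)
The plan is to run a duality (Aubin--Nitsche) argument in the spirit of \cite{MR3712172,MR3047040}, tailored to the mixed structure, and to reduce the estimate to an interpolation remainder plus a single spectral ``high order term.'' Write $\be_h:=\bu_h-\mathcal R_h\bu\in\mathbf{Q}_h$ and let $(\bphi,\boldsymbol{\psi})\in\mathbb{H}_0\times\mathbf{Q}$ solve the source problem \eqref{eq1_source}--\eqref{eq2_source} with datum $\be_h$; by the additional regularity recorded in Remark \ref{remark-del-uhat-sigmahat} one has $\|\bphi\|_{s,\O}+\|\boldsymbol{\psi}\|_{1+s,\O}\lesssim\|\be_h\|_{0,\O}$. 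Testing the second dual equation with $\be_h$ gives the starting identity $\|\be_h\|_{0,\O}^2=-b(\bphi,\be_h)$.

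The next step transfers the interpolation gain onto $\bphi$. Let $\bphi_h\in\mathbb{H}_{0,h}$ be the $\mathbb{H}_0$-component of $\bPi_h^{\mathbb{NED}^{(\ell)}}\bphi$. Because $\curl\bphi_h=\mathcal R_h(\curl\bphi)$ by the commuting diagram \eqref{ned1_diagram}, and since $\be_h\in\mathbf{Q}_h$ is $\L^2$-orthogonal to $\curl\bphi-\mathcal R_h\curl\bphi$, the interpolation remainder is invisible and $-b(\bphi,\be_h)=-b(\bphi_h,\be_h)$. Using $\curl\bphi_h\in\mathbf{Q}_h$ together with the $\L^2$-orthogonality of $\mathcal R_h$ (to replace $\mathcal R_h\bu$ by $\bu$), and then the first equations \eqref{ec1_kappa0} and \eqref{ec1_h} tested with $\bphi_h$, collapses this to $\|\be_h\|_{0,\O}^2=-a(\bsig-\bsig_h,\bphi_h)$. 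Splitting $\bphi_h=\bphi+(\bphi_h-\bphi)$, the interpolation remainder is controlled by \eqref{asympRT}--\eqref{asymp00RT} and the dual regularity, contributing $\lesssim h^s\|\bsig-\bsig_h\|_{0,\O}\|\be_h\|_{0,\O}$.

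I would then rewrite the leading term $-a(\bsig-\bsig_h,\bphi)$, by symmetry of $a(\cdot,\cdot)$ and the first dual equation, as $b(\bsig-\bsig_h,\boldsymbol{\psi})$, and expand it through the second equations \eqref{ec2_kappa0} and \eqref{ec2_h}, inserting $\mathcal R_h\boldsymbol{\psi}$ wherever the discrete relation is invoked. Three kinds of contributions appear: an eigenvalue defect weighted by $(\bu_h,\boldsymbol{\psi})$, handled by the quadratic bound $|\lambda-\lambda_h|\lesssim\|\bsig-\bsig_h\|_{0,\O}^2+\|\bu-\bu_h\|_{0,\O}^2\lesssim h^s(\|\bsig-\bsig_h\|_{0,\O}+\|\bu-\bu_h\|_{0,\O})$ obtained from the a priori rates; a projection remainder $(\bu-\bu_h,\boldsymbol{\psi}-\mathcal R_h\boldsymbol{\psi})$, bounded by $h^s\|\bu-\bu_h\|_{0,\O}\|\be_h\|_{0,\O}$ since $\|\boldsymbol{\psi}-\mathcal R_h\boldsymbol{\psi}\|_{0,\O}\lesssim h^s\|\boldsymbol{\psi}\|_{1+s,\O}$; and a genuinely spectral term proportional to $(\be_h,\bT\be_h)$.

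This last contribution is the crux, and it is precisely the high order term flagged at the beginning of this section. Since $\bu$ is an eigenfunction, $\bI-\lambda\bT$ annihilates the eigenspace, so $(\be_h,\bT\be_h)$ cannot be absorbed by coercivity alone. I would control it by decomposing $\be_h$ into its component along $\bu$ and its $\L^2$-orthogonal complement: the normalizations $\|\bu\|_{0,\O}=\|\bu_h\|_{0,\O}=1$ force $(\be_h,\bu)=(\be_h,\bu-\bu_h)+\tfrac12\|\bu-\bu_h\|_{0,\O}^2$, a higher order quantity, while on the complement of $\bu$ the separation of the eigenvalues (Proposition \ref{separa_eig}) together with the norm convergence $\bT_h\to\bT$ from \eqref{eq:T-Th} render $\bI-\lambda\bT$ invertible with an $h$-independent bound. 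Feeding these estimates back and absorbing the $\|\be_h\|_{0,\O}^2$ contributions by Young's inequality yields the asserted supercloseness. The main obstacle is exactly this spectral step: keeping the constant in the inversion of $\bI-\lambda\bT$ uniform in $h$, and carrying out the bookkeeping so that every surviving term carries either a factor $\|\be_h\|_{0,\O}$ or is of order $h^{2s}(\|\bsig-\bsig_h\|_{0,\O}+\|\bu-\bu_h\|_{0,\O})^2$, which is what makes the final absorption close.
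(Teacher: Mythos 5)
Your duality strategy is the right one, and it is in fact the only route available for comparison: the paper does not prove this corollary at all, but cites \cite[Lemmas 9--11]{MR3712172} and skips the details. I checked your intermediate steps and they are correct: with $\be_h:=\bu_h-\mathcal R_h\bu$ one indeed gets $\|\be_h\|_{0,\O}^2=-b(\bphi,\be_h)=-b(\bphi_h,\be_h)=-a(\bsig-\bsig_h,\bphi_h)$ via the commuting diagram, the interpolation remainder contributes $h^s\|\bsig-\bsig_h\|_{0,\O}\|\be_h\|_{0,\O}$, the eigenvalue defect and the projection remainder are handled as you say, and the identity $(\be_h,\bu_h)=\tfrac12\|\bu-\bu_h\|_{0,\O}^2$ holds. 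The genuine gap is in the final spectral step. What your argument produces is a bound on the \emph{quadratic form}, $((\bI-\lambda\bT)\be_h,\be_h)\lesssim h^s(\|\bsig-\bsig_h\|_{0,\O}+\|\bu-\bu_h\|_{0,\O})\|\be_h\|_{0,\O}$, and you then decompose $\be_h$ into its component along $\bu$ plus $\be_h^\perp\in\{\bu\}^{\perp}$ and invoke invertibility of $\bI-\lambda\bT$ on $\{\bu\}^{\perp}$. But invertibility only yields $\|\be_h^\perp\|_{0,\O}\lesssim\|(\bI-\lambda\bT)\be_h^\perp\|_{0,\O}$; it does not bound $\|\be_h^\perp\|_{0,\O}^2$ by the quadratic form, because $\bI-\lambda\bT$ is \emph{indefinite} on $\{\bu\}^{\perp}$ whenever $\lambda$ is not the smallest eigenvalue: its eigenvalues there are $1-\lambda/\lambda_k$, which are negative for every $\lambda_k<\lambda$. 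Mixing an eigenfunction with $\lambda_k<\lambda$ and one with $\lambda_k>\lambda$ gives $\bv\perp\bu$ with $((\bI-\lambda\bT)\bv,\bv)=0$ and $\|\bv\|_{0,\O}$ arbitrary, so smallness of the form says nothing about $\be_h^\perp$. As written, your absorption argument closes only when $\lambda$ is the first eigenvalue.

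The repair is small and preserves every computation you made: run the duality not with datum $\be_h$ but with an arbitrary $\bg\in\mathbf{Q}$, $\|\bg\|_{0,\O}=1$, i.e. estimate $(\be_h,\bg)=-b(\bphi_g,\be_h)$ where $(\bphi_g,\bT\bg)$ solves \eqref{eq1_source}--\eqref{eq2_source} with datum $\bg$. All your steps go through verbatim, the spectral term becomes $\lambda(\be_h,\bT\bg)=\lambda(\bT\be_h,\bg)$ by self-adjointness of $\bT$, and taking the supremum over $\bg$ yields the operator-norm bound $\|(\bI-\lambda\bT)\be_h\|_{0,\O}\lesssim h^s(\|\bsig-\bsig_h\|_{0,\O}+\|\bu-\bu_h\|_{0,\O})+|\lambda-\lambda_h|$. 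To this, invertibility of $\bI-\lambda\bT$ on $\{\bu\}^{\perp}$ applies legitimately, giving $\|\be_h^\perp\|_{0,\O}\lesssim\|(\bI-\lambda\bT)\be_h\|_{0,\O}$; note this is an $h$-independent property of the fixed compact self-adjoint operator $\bT$ and the simplicity of $\lambda$, so neither Proposition \ref{separa_eig} nor the norm convergence \eqref{eq:T-Th} is actually needed here. Combining this with your identity for $(\be_h,\bu)$, absorbing $\|\be_h\|_{0,\O}\|\bu-\bu_h\|_{0,\O}$ for $h$ small, and using $|\lambda-\lambda_h|\lesssim h^s(\|\bsig-\bsig_h\|_{0,\O}+\|\bu-\bu_h\|_{0,\O})$ together with $\|\bu-\bu_h\|_{0,\O}^2\lesssim h^s\|\bu-\bu_h\|_{0,\O}$ completes the proof for any simple eigenvalue, which is the generality the corollary claims.
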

Let us introduce the following space 
$$
\mathbf{Y}_h:=\left\{\bv\in \H^1(\Omega)^2\,:\,\bv\in \mathrm{P}_1(T)^2, \quad\forall T\in\mathcal{T}_h\right\}.
$$
Now, for each vertex $z$ of the elements in $\mathcal{T}_h$, we define the patch
$
\omega_z:=\bigcup_{z\in T\in\mathcal{T}_h} T.
$
To perform the a posteriori error analysis of our spectral problem, we introduce the so called the postprocessing operator (see \cite{ MR3047040} for instance)  defined by $\Theta_h:\mathbf{Q}\rightarrow \mathbf{Y}_h$ where, for the defined patch $\omega_z$, we fit a piecewise linear function in the average sense, for any $\bv\in\mathbf{Q}$ at the degrees of freedom of element integrations by
$$
\displaystyle\Theta_h\bv(z):=\sum_{T\in\omega_z}\frac{\displaystyle\int_T\bv\,dx}{|\omega_z|}.
$$
Here, $|\omega_z|$ denotes the measure of the patch. Moreover,  $\Theta_h$ satisfies the following properties (see \cite[Lemma 3.2, Theorem 3.3]{MR3047040}).
\begin{lemma}[Properties of the postprocessing operator]\label{postprocessing}
The operator $\Theta_h$ defined above satisfies the following:
\begin{enumerate}
\item For $\bu\in \H^{1+s}(\Omega)^2$ with $s$ as in Theorem \ref{th:reg_velocity} and $T\in\mathcal{T}_h$, there holds $$\|\Theta_h\bu-\bu\|_{0,\O}\lesssim h_T^{1+s}\|\bu\|_{1+s,\O},$$
\item $\Theta_h\mathcal{P}_h^0\bv=\Theta_h\bv$,
\item $\|\Theta_h\bv\|_{0,\O}\lesssim\|\bv\|_{0,\O}$ for all $\bv\in\mathbf{Q}$,
\end{enumerate}
where the hidden constants are  positive and independent of $h$.
\end{lemma}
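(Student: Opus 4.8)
The plan is to establish the three items in order of increasing difficulty: the algebraic identity~(2) first, the $\L^2$-stability~(3) second (since it feeds into the approximation bound), and the approximation estimate~(1) last. For item~(2) I would argue directly from the definition. Writing $\mathcal{P}_h^0$ for the $\L^2(\O)$-orthogonal projection onto piecewise constants, one has $\mathcal{P}_h^0\bv|_T=|T|^{-1}\int_T\bv\,dx$, so that $\int_T\mathcal{P}_h^0\bv\,dx=\int_T\bv\,dx$ for every $T\in\CT_h$. Since the nodal values $\Theta_h\bv(z)$ depend on $\bv$ only through the element integrals $\int_T\bv\,dx$, replacing $\bv$ by $\mathcal{P}_h^0\bv$ alters none of them, and therefore $\Theta_h\mathcal{P}_h^0\bv=\Theta_h\bv$.

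For the stability in item~(3) I would localize. On a fixed $T\in\CT_h$, $\Theta_h\bv|_T$ is the affine function interpolating the three patch-average nodal values, so norm equivalence for $\textrm{P}_1$ on the reference triangle together with a scaling argument gives $\|\Theta_h\bv\|_{0,T}^2\lesssim|T|\sum_{z\in T}|\Theta_h\bv(z)|^2$. Cauchy--Schwarz bounds each nodal value by $|\Theta_h\bv(z)|=|\omega_z|^{-1}\big|\int_{\omega_z}\bv\big|\leq|\omega_z|^{-1/2}\|\bv\|_{0,\omega_z}$, while shape regularity yields $|\omega_z|\gtrsim|T|$ for $z\in T$. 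Combining these gives the local bound $\|\Theta_h\bv\|_{0,T}^2\lesssim\sum_{z\in T}\|\bv\|_{0,\omega_z}^2$; summing over $T$ and invoking the bounded (finite) overlap of the patches $\{\omega_z\}$, again a consequence of shape regularity, produces $\|\Theta_h\bv\|_{0,\O}\lesssim\|\bv\|_{0,\O}$.

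The approximation estimate~(1) is the technical heart. On each element I would introduce a linear polynomial $\boldsymbol{p}$ approximating $\bu$ on the patch $\omega_T$ and split
\[
\bu-\Theta_h\bu=(\bu-\boldsymbol{p})-\Theta_h(\bu-\boldsymbol{p})+(\boldsymbol{p}-\Theta_h\boldsymbol{p}).
\]
The first two terms are controlled by the $\L^2$-stability of item~(3) and the fractional Bramble--Hilbert estimate for the best linear approximation, both contributing the desired order $h_T^{1+s}\|\bu\|_{1+s,\omega_T}$ under the regularity $\bu\in\H^{1+s}(\O)^2$ of Theorem~\ref{th:reg_velocity}. Summing over elements with finite overlap then closes the bound, along the lines of \cite[Lemma 3.2, Theorem 3.3]{MR3047040}.

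The main obstacle lies in the third term $\boldsymbol{p}-\Theta_h\boldsymbol{p}$, which quantifies the failure of $\Theta_h$ to reproduce affine fields: since $\Theta_h\boldsymbol{p}(z)=\boldsymbol{p}(\bar{\bx}_{\omega_z})$ with $\bar{\bx}_{\omega_z}$ the centroid of the patch $\omega_z$, a crude bound yields only $O(h_T)$ through the offset $|\bar{\bx}_{\omega_z}-z|$. Securing the full order $h_T^{1+s}$ requires exploiting the cancellation of these centroid offsets across the patch --- precisely where the finer structural properties of the recovery operator (and any mild mesh-regularity assumptions) enter --- and I would import this superconvergence-type argument from \cite{MR3047040} rather than reprove it.
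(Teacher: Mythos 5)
The paper never proves this lemma itself: it simply cites \cite[Lemma 3.2, Theorem 3.3]{MR3047040} for all three properties, so there is no internal argument to compare against, and your proposal actually supplies more detail than the paper does. Your proofs of items (2) and (3) are correct and self-contained: (2) follows exactly as you say, since the nodal values of $\Theta_h\bv$ depend on $\bv$ only through the element integrals $\int_T\bv$, which $\mathcal{P}_h^0$ preserves; and your scaling/finite-overlap argument for (3) is the standard one and sound (note that $T\subset\omega_z$ already gives $|T|\le|\omega_z|$, shape regularity being needed only for the bounded overlap of the patches). For item (1) you have put your finger on the genuine difficulty rather than glossing over it: since $\Theta_h\boldsymbol{p}(z)=\boldsymbol{p}(\bar{\bx}_{\omega_z})$ for affine $\boldsymbol{p}$, the operator does not reproduce affine functions unless each vertex is (nearly) the centroid of its patch, so stability plus Bramble--Hilbert alone caps out at $O(h_T)$; indeed, for a nonconstant affine $\bu$ on a generic shape-regular mesh one has $\|\Theta_h\bu-\bu\|_{0,\O}\sim h$, not $h^{1+s}$, which shows the stated rate cannot hold without structural mesh hypotheses (symmetry or cancellation of the centroid offsets) that reside in \cite{MR3047040}. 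Deferring that superconvergence ingredient to the reference is therefore not a gap relative to the paper---the paper defers everything---but your observation makes explicit something the paper leaves silent: item (1) is a superconvergence-type statement tied to extra mesh assumptions, not a routine interpolation estimate, and the statement's mixing of the local $h_T$ with global norms is a further sign it should be read as a localized estimate summed over elements.
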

The following result, proved in  \cite[Theorem 3.3]{ MR3047040} states  a superconvergence property for $\Theta_h$.

\begin{lemma}[Superconvergence]
\label{lmm:super}
For $h$ small enough, there holds 
$$
\|\Theta_h\bu_h-\bu\|_{0,\O}\lesssim h^{s}\left(\|\boldsymbol{\rho}-\boldsymbol{\rho}_h\|_{0,\O}+\|\bu-\bu_h\|_{0,\O}\right)+\|\Theta_h\bu-\bu\|_{0,\O},
$$
where the hidden constant is  independent of $h$. 
\end{lemma}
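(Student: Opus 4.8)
The plan is to reduce the asserted bound to the already-established supercloseness result by inserting the postprocessed exact solution $\Theta_h\bu$ and then exploiting the projection-invariance and the $\L^2$-stability of the operator $\Theta_h$ collected in Lemma~\ref{postprocessing}. First I would apply the triangle inequality in the form
$$
\|\Theta_h\bu_h-\bu\|_{0,\O}\leq \|\Theta_h\bu_h-\Theta_h\bu\|_{0,\O}+\|\Theta_h\bu-\bu\|_{0,\O}.
$$
The second summand is precisely the last term on the right-hand side of the claimed estimate, so no further work is needed there; the whole argument concentrates on the first summand.

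For that first summand the crucial observation is that, in the lowest order case $k=0$ to which the statement is restricted, the projection $\mathcal{R}_h$ onto $\mathbf{Q}_h$ coincides with the piecewise-constant $\L^2$-projection $\mathcal{P}_h^0$ appearing in the second property of Lemma~\ref{postprocessing}. Using that invariance I would rewrite $\Theta_h\bu=\Theta_h\mathcal{P}_h^0\bu=\Theta_h\mathcal{R}_h\bu$, which gives
$$
\|\Theta_h\bu_h-\Theta_h\bu\|_{0,\O}=\|\Theta_h(\bu_h-\mathcal{R}_h\bu)\|_{0,\O}.
$$
The $\L^2$-stability bound of the third property of Lemma~\ref{postprocessing} then yields $\|\Theta_h(\bu_h-\mathcal{R}_h\bu)\|_{0,\O}\lesssim\|\bu_h-\mathcal{R}_h\bu\|_{0,\O}$, with hidden constant independent of $h$.

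The concluding step would be to invoke the supercloseness Corollary stated above, which controls $\|\bu_h-\mathcal{R}_h\bu\|_{0,\O}$ by $h^{s}(\|\bsig-\bsig_h\|_{0,\O}+\|\bu-\bu_h\|_{0,\O})$. Concatenating the three estimates reproduces exactly the asserted inequality, the field $\boldsymbol{\rho}$ being identified with the stress $\bsig$. Since the only genuinely delicate ingredient—the supercloseness of $\bu_h$ and $\mathcal{R}_h\bu$ for the eigenfunction—is already available, the remaining argument is essentially an assembly; the point deserving care is the identification $\mathcal{R}_h=\mathcal{P}_h^0$, which is precisely what confines the result to $k=0$, the regime in which the postprocessing operator $\Theta_h$ is well defined.
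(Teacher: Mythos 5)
Your proposal is correct: the paper does not prove this lemma itself but cites \cite[Theorem 3.3]{MR3047040}, and your argument---triangle inequality, the invariance $\Theta_h\mathcal{P}_h^0=\Theta_h$ (which for $k=0$ gives $\Theta_h\mathcal{R}_h\bu=\Theta_h\bu$), the $\L^2$-stability of $\Theta_h$, and then the supercloseness corollary---is precisely the standard proof underlying that citation, assembled from the ingredients the paper provides. Your identification of $\boldsymbol{\rho}$ with the stress $\bsig$ is also the correct reading of the statement's notation.
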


Let us introduice the bubble functions for two dimensional elements. Given $T\in\mathcal{T}_h$ and $e\in\mathcal{E}(T)$, we let $\psi_T$ and $\psi_e$ be the usual triangle-bubble and edge-bubble functions, respectively (see \cite{MR1284252,MR3059294} for further details about these functions), which satisfy the following properties
\begin{enumerate}
\item $\psi_T\in\mathrm{P}_{3}(T)$,  $\text{supp}(\psi_T)\subset T$, $\psi_T=0$ on $\partial T$ and $0\leq\psi_T\leq 1$ in $T$;
\item $\psi_e|_T\in\mathrm{P}_{2}(T)$,  $\text{supp}(\psi_e)\subset \omega_e:=\cup\{T'\in\mathcal{T}_h\,:\, e\in\mathcal{E}(T')\}$, $\psi_e=0$ on $\partial T\setminus e$ and $0\leq\psi_e\leq 1$ in $\omega_e$.
\end{enumerate}

The following results establish standard estimates for the bubble functions which will be essential for testing the efficiency of the residual estimator (see  \cite[Lemma 1.3]{MR1284252}).
\begin{lemma}[Bubble function properties]
\label{lmm:bubble_estimates}
Given $\ell\in\mathbb{N}\cup\{0\}$, and for each $T\in\mathcal{T}_h$ and $e\in\mathcal{E}(T)$, there hold
$$
\|\psi_T q\|_{0,T}^2\leq \|q\|_{0,T}^2\lesssim  \|\psi_T^{1/2} q\|_{0,T}^2\quad\forall q\in\mathrm{P}_{\ell}(T),
$$
$$
\|\psi_e L(p)\|_{0,e}^2\leq \| p\|_{0,e}^2\lesssim  \|\psi_e^{1/2} p\|_{0,e}^2\quad\forall p\in\mathrm{P}_{\ell}(e),
$$
and 
$$
h_e\|p \|_{0,e}^2\lesssim  \|\psi_e^{1/2} L(p)\|_{0,T}^2\lesssim
h_e\|p\|_{0,e}^2\quad\forall p\in\mathrm{P}_{\ell}(e),
$$
where $L: C(e)\rightarrow C(T)$ with $L(p)\in\mathrm{P}_k(T)$ and $L(p)|_e=p$ for all $p\in\mathrm{P}_k(e)$, and  the hidden constants depend on $k$ and the shape regularity of the triangulation.
\end{lemma}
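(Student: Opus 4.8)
The plan is to reduce every inequality to a norm-equivalence on a fixed reference configuration and then transport it to an arbitrary element by an affine change of variables, using the shape-regularity of $\{\mathcal{T}_h\}_{h>0}$ to keep the constants uniform. The two upper bounds that involve only the bubble weights, namely $\|\psi_T q\|_{0,T}^2\leq\|q\|_{0,T}^2$ and $\|\psi_e L(p)\|_{0,e}^2\leq\|p\|_{0,e}^2$, are immediate: since $0\leq\psi_T\leq 1$ and $0\leq\psi_e\leq 1$, one has $\psi_T^2\leq 1$ and $\psi_e^2\leq 1$ pointwise, and recalling $L(p)|_e=p$ the integrand on each left-hand side is dominated by the corresponding right-hand integrand.

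For the lower bounds I would first work on the reference triangle $\widehat{T}$ with bubble $\psi_{\widehat{T}}$ and on the reference edge $\widehat{e}$ with bubble $\psi_{\widehat{e}}$. On the finite-dimensional space $\mathrm{P}_\ell(\widehat{T})$ both $\widehat{q}\mapsto\|\widehat{q}\|_{0,\widehat{T}}$ and $\widehat{q}\mapsto\|\psi_{\widehat{T}}^{1/2}\widehat{q}\|_{0,\widehat{T}}$ define norms; the only point to check is positive-definiteness of the weighted one, which holds because $\psi_{\widehat{T}}>0$ in the interior of $\widehat{T}$, so $\psi_{\widehat{T}}^{1/2}\widehat{q}\equiv 0$ forces $\widehat{q}$ to vanish on an open set and hence identically. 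Equivalence of norms on a finite-dimensional space then yields $\|\widehat{q}\|_{0,\widehat{T}}^2\lesssim\|\psi_{\widehat{T}}^{1/2}\widehat{q}\|_{0,\widehat{T}}^2$, and the same argument on $\widehat{e}$ gives $\|\widehat{p}\|_{0,\widehat{e}}^2\lesssim\|\psi_{\widehat{e}}^{1/2}\widehat{p}\|_{0,\widehat{e}}^2$. Similarly, $\widehat{p}\mapsto\|\psi_{\widehat{e}}^{1/2}L(\widehat{p})\|_{0,\widehat{T}}$ is a norm on $\mathrm{P}_\ell(\widehat{e})$ (again using $\psi_{\widehat{e}}>0$ on the interior together with $L(\widehat{p})|_{\widehat{e}}=\widehat{p}$), so that $\|\widehat{p}\|_{0,\widehat{e}}^2\sim\|\psi_{\widehat{e}}^{1/2}L(\widehat{p})\|_{0,\widehat{T}}^2$.

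Finally I would pull these equivalences back through the affine map $F:\widehat{T}\to T$. Writing $q=\widehat{q}\circ F^{-1}$, the Jacobian scales the area like $h_T^2$ and the length of $e$ like $h_e$, giving $\|q\|_{0,T}^2\sim h_T^2\|\widehat{q}\|_{0,\widehat{T}}^2$ and $\|p\|_{0,e}^2\sim h_e\|\widehat{p}\|_{0,\widehat{e}}^2$. For the first two lower bounds the $h$-powers cancel between the two sides and the claim follows. For the third estimate the area/length mismatch produces
$$
\|\psi_e^{1/2}L(p)\|_{0,T}^2\sim h_T^2\,\|\psi_{\widehat{e}}^{1/2}L(\widehat{p})\|_{0,\widehat{T}}^2\sim h_T^2\,\|\widehat{p}\|_{0,\widehat{e}}^2\sim\frac{h_T^2}{h_e}\,\|p\|_{0,e}^2,
$$
and shape-regularity ($h_T\sim h_e$) turns $h_T^2/h_e$ into a quantity comparable to $h_e$, yielding the two-sided bound $h_e\|p\|_{0,e}^2\lesssim\|\psi_e^{1/2}L(p)\|_{0,T}^2\lesssim h_e\|p\|_{0,e}^2$.

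I expect the main obstacle to be bookkeeping rather than anything conceptual: one must confirm that the weighted quantities are genuine norms, which rests on the strict positivity of the bubbles in the interior, and then track the powers of $h_T$ and $h_e$ correctly through the affine scaling so that precisely one factor $h_e$ survives in the last inequality. Uniformity of all hidden constants over $\{\mathcal{T}_h\}_{h>0}$ is secured by shape-regularity, which controls the equivalence constants between $h_T$, $h_e$ and the Jacobian of $F$.
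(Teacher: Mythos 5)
Your proof is correct and is essentially the argument the paper relies on: the paper does not prove this lemma itself but cites it as a standard result (Lemma 1.3 of the reference it gives), and that standard proof is precisely your combination of norm equivalence on finite-dimensional polynomial spaces over the reference configuration (using strict positivity of the bubbles in the interior) followed by affine scaling with shape-regularity to track the powers of $h_T$ and $h_e$. The only point you leave implicit is that the extension operator $L$ must commute with the affine map $F$, i.e.\ $L(p)\circ F = L(\widehat{p})$, which does hold for the canonical definition of $L$ through the reference element and is what justifies the step $\|\psi_e^{1/2}L(p)\|_{0,T}^2\sim h_T^2\|\psi_{\widehat{e}}^{1/2}L(\widehat{p})\|_{0,\widehat{T}}^2$ in your third estimate.
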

Also, we requiere the following technical result (see \cite[Theorem 3.2.6]{MR0520174}).

\begin{lemma}[Inverse inequality]\label{inversein}
Let $l,m\in\mathbb{N}\cup\{0\}$ such that $l\leq m$. Then, for each $T\in\mathcal{T}_h$ there holds
$$
|q|_{m,T}\lesssim h_T^{l-m}|q|_{l,T}\quad\forall q\in\mathrm{P}_k(T),
$$
where the hidden constant depends on $k,l,m$ and the shape regularity of the triangulations.
\end{lemma}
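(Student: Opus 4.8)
The plan is to reduce the inequality to the reference element by an affine change of variables and then invoke the equivalence of all norms on a finite-dimensional space. First I would fix a reference triangle $\widehat{T}$ and, for each $T\in\mathcal{T}_h$, write the affine map $F_T(\widehat{\bx})=B_T\widehat{\bx}+\boldsymbol{b}_T$ with $F_T(\widehat{T})=T$. The standard estimates for such maps, together with the shape-regularity of the family $\{\mathcal{T}_h\}_{h>0}$, give $\|B_T\|\lesssim h_T$ and $\|B_T^{-1}\|\lesssim h_T^{-1}$, where the hidden constants depend only on the shape-regularity parameter.

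Next, to each $q\in\mathrm{P}_k(T)$ I associate the pulled-back polynomial $\widehat{q}:=q\circ F_T\in\mathrm{P}_k(\widehat{T})$ and use the two classical scaling bounds
$$
|q|_{m,T}\lesssim \|B_T^{-1}\|^m\,|\det B_T|^{1/2}\,|\widehat{q}|_{m,\widehat{T}},
\qquad
|\widehat{q}|_{l,\widehat{T}}\lesssim \|B_T\|^l\,|\det B_T|^{-1/2}\,|q|_{l,T}.
$$

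The crucial intermediate step is the inverse estimate on the reference element: there is a constant $\widehat{C}=\widehat{C}(k,l,m)$ such that
$$
|\widehat{q}|_{m,\widehat{T}}\leq \widehat{C}\,|\widehat{q}|_{l,\widehat{T}}\qquad\forall\,\widehat{q}\in\mathrm{P}_k(\widehat{T}).
$$
Since $l\leq m$, both seminorms vanish on $\mathrm{P}_{l-1}(\widehat{T})$, so they descend to the finite-dimensional quotient space $\mathrm{P}_k(\widehat{T})/\mathrm{P}_{l-1}(\widehat{T})$, on which $|\cdot|_{l,\widehat{T}}$ is a genuine norm; the desired bound then follows from the equivalence of all norms on a finite-dimensional space.

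Finally, chaining the three displayed inequalities, the determinant factors cancel and I obtain
$$
|q|_{m,T}\lesssim \|B_T^{-1}\|^m\,\|B_T\|^l\,|q|_{l,T}\lesssim h_T^{l-m}\,|q|_{l,T},
$$
which is the claim. The only genuinely delicate point is the reference-element estimate: one must observe that the kernel of $|\cdot|_{l,\widehat{T}}$ is exactly $\mathrm{P}_{l-1}(\widehat{T})$ and is contained in the kernel of $|\cdot|_{m,\widehat{T}}$ precisely because $m\geq l$, and it is this passage to the quotient that makes the norm-equivalence argument applicable. Everything else is routine bookkeeping of the powers of $\|B_T\|$ and $\|B_T^{-1}\|$ under shape-regularity.
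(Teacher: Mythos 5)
Your proof is correct and complete: the affine scaling bounds, the passage to the quotient $\mathrm{P}_k(\widehat{T})/\mathrm{P}_{l-1}(\widehat{T})$ (where $|\cdot|_{l,\widehat{T}}$ becomes a norm and $|\cdot|_{m,\widehat{T}}$ remains well defined because $m\geq l$), and the cancellation of the Jacobian determinants are all handled properly. The paper gives no proof of its own for this lemma---it simply cites \cite[Theorem 3.2.6]{MR0520174}---and your argument is precisely the classical scaling-plus-norm-equivalence proof behind that citation, so there is nothing to reconcile between the two.
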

Finally, we will make use of the well known Cl\'ement interpolation operator $I_{h}:\H^{1}(\O)\rightarrow C_I$, where $
C_I:=\{v\in \mathcal{C}(\bar{\Omega}): v|_{T}\in \textrm{P}_{1}(T) \;\ \forall T\in\CT_{h}\}
$.

The following auxiliary results, available in \cite{MR3453481}, are necessary in our forthcoming analysis.

\begin{lemma}
\label{I:clemont}
For all $v\in \H^{1}(\O)$ there holds
$$
\|v-I_{h}v\|_{0,T}\lesssim h_{T}\|v\|_{1,\omega_{T}},\quad
\|v-I_{h}v\|_{0,e}\lesssim h_{e}^{1/2}\|v\|_{1,\omega_{e}},
$$
for all $T\in\CT_{h}$ and for all $e\in \mathcal{E}_h$, where the hidden constants are independent of $h$, the set $\omega_T$ is defined by   $$\omega_{T}:=\{T'\in\CT_{h}: T' \text { and } T \text{ share an edge}\},$$ 
and $\omega_{e}:=\{T'\in\CT_{h}: e\in \mathcal{E}_{T'}\}$.
\end{lemma}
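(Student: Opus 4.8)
The plan is to recover both estimates from the standard construction of the Cl\'ement quasi-interpolant, combined with a local polynomial approximation (Deny--Lions/Poincar\'e) argument and a scaled trace inequality. First I would fix the construction. Writing $\{\phi_z\}$ for the continuous piecewise linear nodal basis of $C_I$ associated with the vertices $z$ of $\CT_h$, and $\pi_z:\L^2(\omega_z)\to\textrm{P}_1(\omega_z)$ for the local $\L^2$-projection onto affine functions on the vertex patch, I set $I_h v:=\sum_z (\pi_z v)(z)\,\phi_z$. The two structural facts to record are: (i) local $\L^2$-stability, $\|I_h v\|_{0,T}\lesssim \|v\|_{0,\omega_T}$, obtained from $\|\phi_z\|_{0,T}\lesssim|T|^{1/2}$, the uniform bound on $\pi_z$ after mapping $\omega_z$ to a reference patch, and shape regularity; and (ii) invariance on constants, $I_h c = c$ for $c\in\mathbb{R}$, which follows from $\pi_z c = c$ together with the partition-of-unity identity $\sum_z\phi_z\equiv 1$.

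With these in hand I would prove the element bound. Using (i) and (ii), for any constant $c$,
$$
\|v-I_h v\|_{0,T}=\|(v-c)-I_h(v-c)\|_{0,T}\le\|v-c\|_{0,T}+\|I_h(v-c)\|_{0,T}\lesssim\|v-c\|_{0,\omega_T}.
$$
Choosing $c$ to be the mean value of $v$ on $\omega_T$ and invoking the Poincar\'e--Wirtinger inequality, rescaled to the patch of diameter comparable to $h_T$, gives $\|v-c\|_{0,\omega_T}\lesssim h_T\,|v|_{1,\omega_T}\le h_T\,\|v\|_{1,\omega_T}$, which is the first asserted inequality.

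For the edge bound I would apply the scaled (multiplicative) trace inequality $\|w\|_{0,e}^2\lesssim h_e^{-1}\|w\|_{0,T}^2+h_e\,\|\nabla w\|_{0,T}^2$ to $w:=v-I_h v$ on an element $T$ having $e$ as an edge. Shape regularity yields $h_e\simeq h_T$, so the element bound just proved controls the first term by $h_e^{-1}h_T^2\|v\|_{1,\omega_T}^2\lesssim h_e\|v\|_{1,\omega_T}^2$. For the second term I would establish the companion gradient estimate $\|\nabla(v-I_h v)\|_{0,T}\lesssim\|v\|_{1,\omega_T}$: writing $\nabla(v-I_hv)=\nabla(v-c)-\nabla I_h(v-c)$, the first piece is $|v|_{1,T}$, while the inverse inequality of Lemma \ref{inversein} applied to the piecewise linear $I_h(v-c)$ together with (i) gives $\|\nabla I_h(v-c)\|_{0,T}\lesssim h_T^{-1}\|v-c\|_{0,\omega_T}\lesssim|v|_{1,\omega_T}$. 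Summing the two contributions and taking square roots produces $\|v-I_h v\|_{0,e}\lesssim h_e^{1/2}\|v\|_{1,\omega_e}$ once the local neighbourhood is identified with $\omega_e$ as in the statement.

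The main obstacle is purely in the bookkeeping of the hidden constants: all of (i), (ii), the Poincar\'e step and the trace inequality must be carried out on a fixed reference patch and transferred back to $\omega_T$ (respectively $\omega_e$) using only the shape regularity of $\{\CT_h\}_{h>0}$, so that no constant depends on $h$ or on the particular patch. The fact that these patches are unions of several elements rather than a single element---so that one works with Poincar\'e on the star of a vertex and must control the uniformly bounded number of overlapping contributions---is the only delicate point; the trace and inverse inequalities themselves are routine.
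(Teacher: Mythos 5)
The paper itself does not prove this lemma at all: it is quoted as a known auxiliary result with a citation to the literature, so any complete argument is by construction a different route. Your route is the standard one for the Cl\'ement operator --- local $\L^2$-projections $\pi_z$ onto $\textrm{P}_1(\omega_z)$, stability plus preservation of constants, Poincar\'e--Wirtinger on patches for the element bound, and a scaled trace inequality combined with an inverse estimate for the edge bound --- and those steps are all sound, with the uniformity-over-patches issue correctly flagged as the place where shape regularity enters.

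There is, however, one genuine discrepancy between what your argument delivers and the statement as literally written, and it is hidden in your final ``identification.'' Your stability bound (i) necessarily involves the union of vertex stars of $T$: the nodal value $(\pi_z v)(z)$ at a vertex $z$ of $T$ depends on $v$ over the whole star $\omega_z$, so what you actually prove is $\|I_h v\|_{0,T}\lesssim \|v\|_{0,\widetilde\omega_T}$ with $\widetilde\omega_T:=\bigcup_{z\in T}\omega_z$, and consequently both final estimates carry $\widetilde\omega_T$ (respectively its analogue for $e$) on the right-hand side. These sets are strictly larger than the $\omega_T$ (edge-neighbours of $T$) and $\omega_e$ (the two elements sharing $e$) defined in the statement, so the closing step ``once the local neighbourhood is identified with $\omega_e$'' is not an identification. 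Indeed, for the operator you constructed the bound with the literal $\omega_e$ is false: take $v\in\H^1(\O)$ vanishing on $\omega_e$ but not on an element that meets $e$ only at a vertex; then the right-hand side vanishes while $I_h v$ does not vanish on the elements adjacent to $e$. This is arguably a defect of the paper's statement rather than of your proof --- the cited reference states the estimates with vertex-star neighbourhoods, and the enlarged patches are harmless in the paper's later use in Lemma \ref{lm_a1}, since shape regularity gives finite overlap of the stars --- but as a proof of the lemma exactly as written, the last step does not close.
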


\subsection{The local and global estimators} We are now in position to introduce our local estimators for the spectral problem \eqref{ec1_h}--\eqref{ec2_h}.

The proposed estimator is  of residual type, and our goal is to prove that is reliable and efficient. 
In what follows, let $(\l_h,\boldsymbol{\sigma}_h, \bu_h)\in\R\times\mathbb{H}_{0,h}\times\mathbf{Q}_h$ be the solution of \eqref{ec1_h}--\eqref{ec2_h}. Now, for each $T\in\mathcal{T}_h$ we define the local error indicator $\eta_{T}$ as follows
\begin{multline}
\label{eq:local_eta_reduced}
\eta_{T}^2:=\|\Theta_h\bu_h-\bu_h\|_{0,T}^2
+h_T^2\left\|\underline{\boldsymbol{\curl}}(\bu_h)-\frac{1}{\mu}\bsig_h^r \right\|_{0,T}^2+h_T^2\left\|\bdiv\left(\frac{1}{\mu}\bsig_h^r \right)\right\|_{0,T}^2\\
+\sum_{e\in\mathcal{E}(T)\cap\mathcal{E}_h(\O)}h_e
\left\|
\jumpp{\frac{1}{\mu}\bsig_h^r}
\right\|_{0,e}^2+\sum_{e\in\mathcal{E}(T)\cap\mathcal{E}_{h}(\partial \O)}h_e
\left\|
\frac{1}{\mu}\bsig_h^r\bn_e
\right\|_{0,e}^2,
\end{multline}
and the respective global estimator is defined by
\begin{equation}
\label{eq:global_est_reduced}
\eta:=\left\{ \sum_{T\in\mathcal{T}_h}\eta_{T}^2\right\}^{1/2}.
\end{equation}
\subsection{Reliability}
In this section we provide an upper bound for the proposed estimator  \eqref{eq:global_est_reduced}. We begin by proving the following technical estimate.
\begin{lemma}
\label{lema_cota_s}
Let $(\l,\boldsymbol{\sigma}, \bu)\in\R\times\mathbb{H}_{0}\times \mathbf{Q}$ be the solution of \eqref{ec1_kappa0}--\eqref{ec2_kappa0}  and let $(\l_{h}$,$\boldsymbol{\sigma}_h$,$\bu_h)\in\R\times\mathbb{H}_{0,h}\times\mathbf{Q}_h$ be its finite element approximation, given as the  solution of  \eqref{ec1_h}--\eqref{ec2_h}.  Then, for all $\btau\in \mathbb{H}_{0}$, we have.
\begin{multline}
\label{eq:error_bound1}
\|\boldsymbol{\sigma}-\boldsymbol{\sigma}_{h}\|_{\curl,\O}+\|\bu-\bu_{h}\|_{0,\O}\lesssim\displaystyle\sup_{\underset{\btau\neq\boldsymbol{0}}{\btau\in\mathbb{H}_{0}}}\frac{-a(\boldsymbol{\sigma}_{h},\btau)-b(\btau,\bu_h)}{\|\btau\|_{\bdiv,\O}}\\
+\underbrace{|\lambda_{h}-\lambda |+\|\bu-\Theta_h\bu_{h}\|_{0,\O}}_{\text{h.o.t}}+\|\Theta_h\bu_{h}-\bu_{h}\|_{0,\O},
\end{multline}
where the hidden constant is independent of $h$.
\end{lemma}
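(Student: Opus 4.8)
The plan is to estimate the error pair $(\bsig-\bsig_h,\bu-\bu_h)$ directly through the global inf-sup condition \eqref{eq:complete_infsup} of the continuous formulation. Choosing $\btau=\bsig-\bsig_h\in\mathbb{H}_0$ and $\bv=\bu-\bu_h\in\mathbf{Q}$ in \eqref{eq:complete_infsup}, it suffices to bound the numerator $a(\bsig-\bsig_h,\bxi)+b(\bxi,\bu-\bu_h)+b(\bsig-\bsig_h,\bw)$, for arbitrary $(\bxi,\bw)\in\mathbb{H}_0\times\mathbf{Q}$, by the right-hand side of \eqref{eq:error_bound1} times $\norm{(\bxi,\bw)}_{\mathbb{H}_0\times\mathbf{Q}}$, and then to pass to the supremum. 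The first pair of terms is the easy one: using the continuous equation \eqref{ec1_kappa0} tested against $\bxi$, that is $a(\bsig,\bxi)+b(\bxi,\bu)=0$, the quantity $a(\bsig-\bsig_h,\bxi)+b(\bxi,\bu-\bu_h)$ collapses to the residual $-a(\bsig_h,\bxi)-b(\bxi,\bu_h)$, which, after dividing by $\norm{\bxi}_{\curl,\O}$, is exactly the first term of \eqref{eq:error_bound1}.

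The delicate term is $b(\bsig-\bsig_h,\bw)$, because $\bw\in\mathbf{Q}$ need not be discrete while the discrete equation \eqref{ec2_h} only tests against $\mathbf{Q}_h$. Here I would exploit that $\curl(\bsig_h)\in\mathbf{Q}_h$ for both N\'edelec families: since $\mathcal{R}_h$ is the $\L^2(\O)$-orthogonal projection onto $\mathbf{Q}_h$, one has $b(\bsig_h,\bw)=\int_{\O}\bw\cdot\curl(\bsig_h)=\int_{\O}(\mathcal{R}_h\bw)\cdot\curl(\bsig_h)=b(\bsig_h,\mathcal{R}_h\bw)$, and then \eqref{ec2_h} together with $\mathcal{R}_h\bu_h=\bu_h$ gives $b(\bsig_h,\bw)=-\lambda_h(\bu_h,\mathcal{R}_h\bw)_{0,\O}=-\lambda_h(\bu_h,\bw)_{0,\O}$. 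Combined with the second continuous equation \eqref{ec2_kappa0}, $b(\bsig,\bw)=-\lambda(\bu,\bw)_{0,\O}$, this yields the identity $b(\bsig-\bsig_h,\bw)=-\lambda(\bu,\bw)_{0,\O}+\lambda_h(\bu_h,\bw)_{0,\O}$.

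To avoid the circular reappearance of $\norm{\bu-\bu_h}_{0,\O}$, I would now insert the postprocessed field $\Theta_h\bu_h$ and split the right-hand side as $-\lambda(\bu-\Theta_h\bu_h,\bw)_{0,\O}+(\lambda_h-\lambda)(\bu_h,\bw)_{0,\O}+\lambda(\bu_h-\Theta_h\bu_h,\bw)_{0,\O}$. Applying Cauchy--Schwarz to each summand, recalling the normalization $\norm{\bu_h}_{0,\O}=1$ and the uniform boundedness of $\lambda,\lambda_h$, bounds $\abs{b(\bsig-\bsig_h,\bw)}$ by a constant times $\bigl(\norm{\bu-\Theta_h\bu_h}_{0,\O}+\abs{\lambda_h-\lambda}+\norm{\Theta_h\bu_h-\bu_h}_{0,\O}\bigr)\norm{\bw}_{0,\O}$, which are precisely the three remaining terms of \eqref{eq:error_bound1}. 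Collecting the contributions, dividing by $\norm{(\bxi,\bw)}_{\mathbb{H}_0\times\mathbf{Q}}$, using $\norm{\bxi}_{\curl,\O}\le\norm{(\bxi,\bw)}_{\mathbb{H}_0\times\mathbf{Q}}$ and $\norm{\bw}_{0,\O}\le\norm{(\bxi,\bw)}_{\mathbb{H}_0\times\mathbf{Q}}$, and then taking the supremum delivers the claim, with $\norm{(\bsig-\bsig_h,\bu-\bu_h)}_{\mathbb{H}_0\times\mathbf{Q}}=\norm{\bsig-\bsig_h}_{\curl,\O}+\norm{\bu-\bu_h}_{0,\O}$.

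The main obstacle is the treatment of $b(\bsig-\bsig_h,\bw)$: the nonconformity $\bw\notin\mathbf{Q}_h$ must be absorbed through the orthogonality $\curl(\bsig_h)\in\mathbf{Q}_h$, and the subsequent algebraic splitting must be engineered around $\Theta_h\bu_h$ so that only genuinely higher-order quantities (plus the already present $\norm{\Theta_h\bu_h-\bu_h}_{0,\O}$) survive, rather than the full error $\norm{\bu-\bu_h}_{0,\O}$, which would otherwise have to be absorbed into the left-hand side.
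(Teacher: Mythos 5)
Your proposal is correct and follows essentially the same route as the paper: apply the global inf-sup condition \eqref{eq:complete_infsup} to the error pair, reduce the $a$- and $b$-terms in $\bxi$ to the residual via \eqref{ec1_kappa0}, reduce $b(\bsig-\bsig_h,\bw)$ to $\lambda_h(\bu_h,\bw)_{0,\O}-\lambda(\bu,\bw)_{0,\O}$ using that $\curl(\bsig_h)\in\mathbf{Q}_h$ together with \eqref{ec2_h} (the paper states this as the pointwise identity $\curl(\bsig_h)=-\lambda_h\bu_h$, which is equivalent to your projector argument), and finally insert $\Theta_h\bu_h$ and use $\|\bu_h\|_{0,\O}=1$. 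The only differences are cosmetic: the paper splits the supremum into two separate suprema before estimating, and applies the triangle inequality to $\|\lambda_h\bu_h-\lambda\bu\|_{0,\O}$ rather than rearranging the bilinear terms first, but the resulting bounds are identical.
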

\begin{proof}
Applying the inf-sup condition \eqref{eq:complete_infsup} on the errors $\boldsymbol{\sigma}-\boldsymbol{\sigma}_h$ and $\bu-\bu_h$ we have
that that 
\begin{align*}
\|(\boldsymbol{\sigma}\hspace*{-0.045cm}-\hspace*{-0.045cm}\boldsymbol{\sigma}_{h},\bu\hspace*{-0.045cm}-\hspace*{-0.045cm}\bu_{h})\|_{\mathbb{H}_{0}\times \mathbf{Q}}\lesssim &\hspace*{-0.1cm}\displaystyle\sup_{\underset{(\btau,\bv)\neq \boldsymbol{0}}{(\btau,\bv)\in \mathbb{H}_{0}\times\mathbf{Q}}}\frac{a(\boldsymbol{\sigma}\hspace*{-0.045cm}-\hspace*{-0.045cm}\boldsymbol{\sigma}_{h},\btau)\hspace*{-0.045cm}+\hspace*{-0.045cm}b(\btau,\bu\hspace*{-0.045cm}-\hspace*{-0.045cm}\bu_h)\hspace*{-0.045cm}+\hspace*{-0.045cm}b(\boldsymbol{\sigma}\hspace*{-0.045cm}-\hspace*{-0.045cm}\boldsymbol{\sigma}_h,\bv)}{\|(\btau,\bv)\|_{\mathbb{H}_{0}\times \mathbf{Q}}}\\
\lesssim &\displaystyle\sup_{\underset{\btau\neq\boldsymbol{0}}{\btau\in\mathbb{H}_{0}}}\frac{-a(\boldsymbol{\sigma}_{h},\btau)-b(\btau,\bu_h)}{\|\btau\|_{\curl,\O}}+\displaystyle\sup_{\underset{\bv\neq\boldsymbol{0}}{\bv\in\mathbf{Q}}}\frac{b(\boldsymbol{\sigma}-\boldsymbol{\sigma}_h,\bv)}{\|\bv\|_{0,\O}},
\end{align*}
where we have used \eqref{ec1_kappa0}. Now, according to the definition of the bilinear operator $b(\cdot,\cdot)$, the equation  \eqref{ec2_kappa0} and that $\curl(\boldsymbol{\sigma}_{h})=-\l_h\bu_h$,  and finally using the Cauchy–Schwarz inequality, we obtain
\begin{multline*}
\displaystyle\sup_{\underset{\bv\neq\boldsymbol{0}}{\bv\in\mathbf{Q}}}\frac{b(\boldsymbol{\sigma}-\boldsymbol{\sigma}_h,\bv)}{\|\bv\|_{0,\O}}\leq \|\lambda_h\bu_h-\lambda\bu\|_{0,\O}
\leq |\l_h-\l|\|\bu_h\|_{0,\O}+|\l|\|\bu-\bu_h\|_{0,\O}\\
\leq |\l_h-\l|\|\bu_h\|_{0,\O}+|\l|\left(\|\bu-\Theta_h\bu_h\|_{0,\O}+\|\Theta_h\bu_h-\bu_h\|_{0,\O}\right).
\end{multline*}
Then, using the above estimate and recalling that $\|\bu_h\|_{0,\O}=1$ we have 
\begin{multline*}
\|\boldsymbol{\sigma}-\boldsymbol{\sigma}_{h}\|_{\curl,\O}+\|\bu-\bu_{h}\|_{0,\O}\lesssim\displaystyle\sup_{\underset{\btau\neq\boldsymbol{0}}{\btau\in\mathbb{H}_{0}}}\frac{-a(\boldsymbol{\sigma}_{h},\btau)-b(\btau,\bu_h)}{\|\btau\|_{\curl,\O}}\\
+\underbrace{|\lambda_{h}-\lambda |+\|\bu-\Theta_h\bu_{h}\|_{0,\O}}_{\text{h.o.t}}+\|\Theta_h\bu_{h}-\bu_{h}\|_{0,\O}.
\end{multline*}
This concludes the proof.
\end{proof}
\begin{remark}\label{eq:hot}
We note that, thanks to Lemmas \ref{lema:apriorie}, \ref{postprocessing} and \ref{lmm:super}, the estimate for the high order term
$$
\text{h.o.t}\leq C h^s \left(\|\boldsymbol{\sigma}-\boldsymbol{\sigma}_{h}\|_{0,\O}+\|\bu-\bu_{h}\|_{0,\O}\right)
+\|\bu-\Theta_h\bu\|_{0,\O}
\lesssim h^{2s},
$$
holds, where the constant $C$ is uniform on $h$.
\end{remark}

Our next goal is to bound the supremum in Lemma  \ref{lema_cota_s}. To do this task,
let $\btau\in \mathbb{H}_{0}$, we proceed as in the proof of Lemma \ref{lm:inf-sup_h}, and let $\boldsymbol{z}\in \H_0^1(\widetilde{\O})^2$ be the unique weak solution of the boundary value
problem \eqref{eq:aux_inf}, where $\widetilde{\O}$ is a bounded convex polygonal domain
containing $\overline{\Omega}$. Since $\curl(\btau-\underline{\boldsymbol{\curl}} (\boldsymbol{z}))=0$ in $\O$, and $\O$ is connected, there exists $\bphi:=(\varphi_1,\varphi_2)\in \H^1(\O)^2$, with $\int_\O\varphi_1=\int_\O\varphi_2=0$, such that  $\btau=\nabla \boldsymbol{\varphi}+\underline{\boldsymbol{\curl}}{(\boldsymbol{z})}$, and we have 
\begin{equation}\label{eq:Hel_desc}
\|\boldsymbol{z}\|_{2,\O}+\|\bphi\|_{1,\O}\lesssim\|\btau\|_{\curl,\O}.
\end{equation}  Now, we  let $\bphi_h:=(I_h(\varphi_1),I_h(\varphi_2))$ and define $\btau_h\in\mathbb{H}_{h}$ as
$$
\btau_{h}:=\nabla\bphi_h+\bPi_h^{\mathbb{NED}^{(\ell)}}\left(\underline{\boldsymbol{\curl}}(\boldsymbol{z})\right)-d_{h}\mathbb{J},
$$
where  $\bPi_h^{\mathbb{NED}^{(\ell)}}$ is the N\'edelec  interpolation operator that satisfies properties  \eqref{ned1_diagram}-\eqref{asymp00RT}. The constant $d_{h}$ is chosen in the following way
$$
d_{h}:=\dfrac{1}{2|\O|}\int_{\O}\btau_{h}:\mathbb{J}=\dfrac{1}{2|\O|}\int_{\O}\left(\nabla\bphi_h+\bPi_h^{\mathbb{NED}^{(\ell)}}\left(\underline{\boldsymbol{\curl}}(\boldsymbol{z})\right)\right):\mathbb{J},
$$
in order to admit that $\btau_{h}\in \mathbb{H}_{h,0}$. Notice that  we have used the fact that  $\btau\in  \mathbb{H}_{0}$ and its Helmoltz decomposition.

As a first step to bound the supremum appearing on the right hand side of \eqref{eq:error_bound1}, we  note that for all $\boldsymbol{\xi}_{h}\in\mathbb{H}_{0,h}$ and   \eqref{ec1_h}, we have
$$
a(\boldsymbol{\sigma}_{h},\boldsymbol{\xi}_{h})+b(\boldsymbol{\xi}_{h},\bu_h)=0.
$$
On the other hand, let $\boldsymbol{\xi}\in\mathbb{H}$ be such that $$\bxi:=\btau-\btau_{h}=\nabla\bphi-\nabla\bphi_h+\underline{\boldsymbol{\curl}}(\boldsymbol{z})-\bPi_h^{\mathbb{NED}^{(\ell)}}\left(\underline{\boldsymbol{\curl}}(\boldsymbol{z})\right)+d_{h}\mathbb{J}.$$
Since $\curl(\nabla\bphi-\nabla\bphi_h)=\curl(d_{h}\mathbb{J})=0$, and invoking the commutative diagram property \eqref{ned1_diagram}, identity above is written as follows $$\curl(\bxi)=\curl(\underline{\boldsymbol{\curl}}(\boldsymbol{z})-\bPi_h^{\mathbb{NED}^{(\ell)}}\left(\underline{\boldsymbol{\curl}}(\boldsymbol{z})\right))=\curl(\underline{\boldsymbol{\curl}}(\boldsymbol{z}))-\mathcal{R}_h(\curl(\underline{\boldsymbol{\curl}}(\boldsymbol{z}))).$$
Now, since $\mathcal{R}_h$ is the $\L^2(\O)$-orthogonal projector,  we have that
$b(\bxi,\bu_h)=0.$
Therefore, from the fact that $\boldsymbol{\sigma}_{h}\in\mathbb{H}_{0,h}$ we  obtain the following identity
\begin{align*}
-\left[a(\boldsymbol{\sigma}_{h},\btau)+b(\btau,\bu_h)\right]&=-\left[a(\boldsymbol{\sigma}_{h},\bxi)+b(\bxi,\bu_h)\right]=-a(\boldsymbol{\sigma}_{h},\bxi).
\end{align*}
Now, invoking the definition of $\bxi$ and that $a(\boldsymbol{\sigma}_{h},d_{h}\mathbb{J})=d_{h}a(\boldsymbol{\sigma}_{h},\mathbb{J})=0$ we obtain  
\begin{equation}\label{eq:sup}
-\left[a(\boldsymbol{\sigma}_{h},\btau)+b(\btau,\bu_h)\right]=\underbrace{-a(\boldsymbol{\sigma}_{h},\nabla(\bphi-\bphi_h))}_{\mathfrak{T}_1}
+\underbrace{-a(\boldsymbol{\sigma}_{h},\underline{\boldsymbol{\curl}}(\boldsymbol{z})-\bPi_h^{\mathbb{NED}^{(\ell)}}\left(\underline{\boldsymbol{\curl}}(\boldsymbol{z})\right))}_{\mathfrak{T}_{2}},\end{equation}
where the terms $\mathfrak{T}_{1}$ and $\mathfrak{T}_2$ must be bounded. We begin with $\mathfrak{T}_1$.
\begin{lemma}
\label{lm_a1}
There exists certain constant independent of $h$, such that
$$
\left| \mathfrak{T}_1\right|\lesssim \left\{\sum_{T\in\CT_{h}}\eta_{T}^{2} \right\}^{1/2}\|\btau\|_{\curl,\O}.
$$
\end{lemma}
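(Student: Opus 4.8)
The plan is to start from the definition $\mathfrak{T}_1 = -a(\boldsymbol{\sigma}_h, \nabla(\bphi - \bphi_h))$ and exploit the structure of the bilinear form $a(\cdot,\cdot)$ together with an integration by parts. Recall that $a(\boldsymbol{\sigma}_h, \nabla(\bphi-\bphi_h)) = \frac{1}{\mu}\int_\O \boldsymbol{\sigma}_h^r : (\nabla(\bphi-\bphi_h))^r$. Since the term $(\btau:\mathbb{J})\mathbb{J}$ that distinguishes $\btau$ from $\btau^r$ pairs trivially against gradients (because $\nabla(\bphi-\bphi_h):\mathbb{J} = \mathrm{rot}(\bphi-\bphi_h)$ integrates against the skew part in a controlled way), I would first reduce the integrand so that only $\boldsymbol{\sigma}_h^r$ against the full gradient $\nabla(\bphi-\bphi_h)$ survives. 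The key step is then to integrate by parts element-by-element over the triangulation $\CT_h$: the volume term produces $\int_T \bdiv(\frac{1}{\mu}\boldsymbol{\sigma}_h^r)\cdot(\bphi-\bphi_h)$ and the boundary terms assemble, using the convention on jumps $\jumpp{\cdot}$ and the unit normals $\bn_e$, into interior-edge contributions $\int_e \jumpp{\frac{1}{\mu}\boldsymbol{\sigma}_h^r}\cdot(\bphi-\bphi_h)$ and boundary-edge contributions $\int_e \frac{1}{\mu}\boldsymbol{\sigma}_h^r\bn_e\cdot(\bphi-\bphi_h)$. This is exactly the origin of the three residual terms (element divergence, interior jump, boundary flux) appearing in the local indicator $\eta_T$ in \eqref{eq:local_eta_reduced}.

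Next I would insert the Cl\'ement interpolant: since $\bphi_h = (I_h(\varphi_1), I_h(\varphi_2))$, the factor $\bphi-\bphi_h$ is precisely $\bphi - I_h\bphi$, so Lemma \ref{I:clemont} applies directly. Applying Cauchy--Schwarz on each element and each edge, then the Cl\'ement estimates $\|\bphi-I_h\bphi\|_{0,T}\lesssim h_T\|\bphi\|_{1,\omega_T}$ and $\|\bphi-I_h\bphi\|_{0,e}\lesssim h_e^{1/2}\|\bphi\|_{1,\omega_e}$, produces exactly the weighted norms $h_T\|\bdiv(\frac{1}{\mu}\boldsymbol{\sigma}_h^r)\|_{0,T}$, $h_e^{1/2}\|\jumpp{\frac{1}{\mu}\boldsymbol{\sigma}_h^r}\|_{0,e}$ and $h_e^{1/2}\|\frac{1}{\mu}\boldsymbol{\sigma}_h^r\bn_e\|_{0,e}$ that match the square roots of the corresponding terms in $\eta_T^2$. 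I would then use a discrete Cauchy--Schwarz inequality over the elements and edges, together with the finite-overlap property of the Cl\'ement patches $\omega_T$, $\omega_e$ (bounded overlap by shape-regularity), to sum these local contributions into the global estimator $\eta = \{\sum_T \eta_T^2\}^{1/2}$ times the factor $\|\bphi\|_{1,\O}$.

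Finally, I would close the estimate by controlling $\|\bphi\|_{1,\O}$ through the stability of the Helmholtz decomposition \eqref{eq:Hel_desc}, namely $\|\bphi\|_{1,\O}\lesssim\|\btau\|_{\curl,\O}$, which yields the claimed bound $|\mathfrak{T}_1|\lesssim\{\sum_{T\in\CT_h}\eta_T^2\}^{1/2}\|\btau\|_{\curl,\O}$. The main obstacle I anticipate is the bookkeeping in the integration by parts: one must be careful that the skew-symmetric correction implicit in writing $\boldsymbol{\sigma}_h^r$ rather than $\boldsymbol{\sigma}_h$ does not spoil the identity, and that the edge contributions are correctly split and signed so that interior edges collapse into jumps while boundary edges retain a single-sided flux with the fixed normal $\bn_e$. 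Once that algebra is arranged consistently, the remaining steps are routine applications of Cauchy--Schwarz, the Cl\'ement estimates, and \eqref{eq:Hel_desc}.
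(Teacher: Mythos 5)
Your plan reproduces the paper's proof essentially step for step: the reduction $\bsig_h^r:(\nabla(\bphi-\bphi_h))^r=\bsig_h^r:\nabla(\bphi-\bphi_h)$, the elementwise integration by parts producing exactly the divergence, interior-jump and boundary-flux residuals of $\eta_T$, Cauchy--Schwarz combined with the Cl\'ement estimates of Lemma \ref{I:clemont}, and closure via the Helmholtz stability bound \eqref{eq:Hel_desc}. One small remark: the reduction step needs no integration argument at all --- it follows from the pointwise algebraic identity $\bsig_h^r:\mathbb{J}=0$, so the correction term $\tfrac{1}{2}\bigl(\nabla(\bphi-\bphi_h):\mathbb{J}\bigr)\mathbb{J}$ pairs to zero exactly, which is simpler than the justification you sketch.
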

\begin{proof}
First, we note that 
\begin{align*}
\mathfrak{T}_1=-\int_{\Omega}\frac{1}{\mu}\boldsymbol{\sigma}_{h}^r:(\nabla(\bphi-\bphi_h))^r=-\int_{\Omega}\frac{1}{\mu}\boldsymbol{\sigma}_{h}^r:\nabla(\bphi-\bphi_h).
\end{align*}
Now, integrating by parts on each $T\in\CT_{h}$, we obtain that
\begin{multline*}
\mathfrak{T}_1=\int_{\Omega}-\frac{1}{\mu}\boldsymbol{\sigma}_{h}^r:\nabla(\bphi-\bphi_h)= \sum_{T\in \CT_{h}}\int_T-\frac{1}{\mu}\boldsymbol{\sigma}_{h}^r:\nabla(\bphi-\bphi_h)\\
=\sum_{T\in \CT_{h}}\int_T\bdiv\left(\frac{1}{\mu}\boldsymbol{\sigma}_{h}^r\right)\cdot(\bphi-\bphi_h)
+\sum_{e\in\mathcal{E}_h(\O)}
\int_{e}
\jumpp{\frac{1}{\mu}\bsig_h^r}\cdot(\bphi-\bphi_h)\\
+\sum_{e\in\mathcal{E}_{h}(\partial \O)}\int_{e}
\frac{1}{\mu}\bsig_h^r\bn_e
\cdot(\bphi-\bphi_h).
\end{multline*}
Applying Cauchy-Schwarz inequality, recalling that $\bphi_h:=(I_h(\varphi_1),I_h(\varphi_2))$, and invoking the
approximation properties presented in Lemma \ref{I:clemont} and estimate \eqref{eq:Hel_desc}, we have
\begin{multline*}
|\mathfrak{T}_1|
\leq \sum_{T\in \CT_{h}}h_{T}\left\|\bdiv\left(\frac{1}{\mu}\boldsymbol{\sigma}_{h}^r\right)\right\|_{0,T}\|\bphi\|_{1,\omega_{T}}
+\sum_{e\in \mathcal{E}(T)\cap\mathcal{E}_{h}(\O)}h_{e}\left\|\jumpp{\frac{1}{\mu}\bsig_h^r}\right\|_{0,e}\|\bphi\|_{1,\omega_{e}}\\
+\sum_{e\in \mathcal{E}(T)\cap\mathcal{E}_{h}(\partial \O)}h_{e}\left\| \frac{1}{\mu}\bsig_h^r\bn_e\right\|_{0,e}\|\bphi\|_{1,\omega_{e}}\lesssim \left\{\sum_{T\in\CT_{h}}\eta_{T}^{2} \right\}^{1/2}\|\btau\|_{\curl,\O},
\end{multline*}
where the hidden constant is independent of $h$ and the discrete solution. This concludes the proof.
\end{proof}

The bound for  $\mathfrak{T}_2$ is contained in the following lemma.
\begin{lemma}
\label{lm_a2}
There exists certain constant, independent of  $h$, such that
$$
\left|\mathfrak{T}_2\right|\lesssim\left\{\sum_{T\in\CT_{h}}\eta_{T}^{2} \right\}^{1/2}\|\btau\|_{\curl,\O}.
$$
\end{lemma}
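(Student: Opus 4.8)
The plan is to bound
$$
\mathfrak{T}_2=-a\bigl(\boldsymbol{\sigma}_{h},\underline{\boldsymbol{\curl}}(\boldsymbol{z})-\bPi_h^{\mathbb{NED}^{(\ell)}}(\underline{\boldsymbol{\curl}}(\boldsymbol{z}))\bigr)
=-\frac{1}{\mu}\int_\O\boldsymbol{\sigma}_h^r:\bigl(\underline{\boldsymbol{\curl}}(\boldsymbol{z})-\bPi_h^{\mathbb{NED}^{(\ell)}}(\underline{\boldsymbol{\curl}}(\boldsymbol{z}))\bigr)^r,
$$
where the tensor inner product with $\boldsymbol{\sigma}_h^r$ automatically picks out the deviatoric part of the interpolation error, so I may drop the outer $(\cdot)^r$. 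The guiding idea is that, unlike $\mathfrak{T}_1$, there is no polynomial to subtract here; instead I want to produce the volumetric residual term $\|\underline{\boldsymbol{\curl}}(\bu_h)-\tfrac{1}{\mu}\boldsymbol{\sigma}_h^r\|_{0,T}$ appearing in $\eta_T$. The device for this is the commuting/orthogonality structure of the Nédelec interpolant: since $\boldsymbol{\sigma}_h^r$ is (elementwise) polynomial and the interpolation error $\underline{\boldsymbol{\curl}}(\boldsymbol{z})-\bPi_h^{\mathbb{NED}^{(\ell)}}(\underline{\boldsymbol{\curl}}(\boldsymbol{z}))$ is $\L^2$-orthogonal to the appropriate polynomial space, I expect $-\tfrac{1}{\mu}\int_\O\boldsymbol{\sigma}_h^r:(\cdot)$ to be replaceable, up to the orthogonality, by an expression involving $\underline{\boldsymbol{\curl}}(\bu_h)$.

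Concretely, first I would insert $\underline{\boldsymbol{\curl}}(\bu_h)$: write
$$
\mathfrak{T}_2=-\frac{1}{\mu}\int_\O\Bigl(\boldsymbol{\sigma}_h^r-\mu\,\underline{\boldsymbol{\curl}}(\bu_h)\Bigr):\bigl(\underline{\boldsymbol{\curl}}(\boldsymbol{z})-\bPi_h^{\mathbb{NED}^{(\ell)}}(\underline{\boldsymbol{\curl}}(\boldsymbol{z}))\bigr)
-\int_\O\underline{\boldsymbol{\curl}}(\bu_h):\bigl(\underline{\boldsymbol{\curl}}(\boldsymbol{z})-\bPi_h^{\mathbb{NED}^{(\ell)}}(\underline{\boldsymbol{\curl}}(\boldsymbol{z}))\bigr).
$$
The second integral should vanish (or be controllable) using the orthogonality built into $\bPi_h^{\mathbb{NED}^{(\ell)}}$ together with the commuting diagram \eqref{ned1_diagram}, because $\underline{\boldsymbol{\curl}}(\bu_h)$ lies in a Nédelec-type space and $\mathcal{R}_h$ is the $\L^2$-orthogonal projector; this is the same mechanism that made $b(\boldsymbol{\xi},\bu_h)=0$ earlier in the argument. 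For the first integral I apply Cauchy–Schwarz elementwise, recognize $\|\tfrac{1}{\mu}\boldsymbol{\sigma}_h^r-\underline{\boldsymbol{\curl}}(\bu_h)\|_{0,T}$ as exactly the (scaled) volumetric indicator in \eqref{eq:local_eta_reduced}, and bound $\|\underline{\boldsymbol{\curl}}(\boldsymbol{z})-\bPi_h^{\mathbb{NED}^{(\ell)}}(\underline{\boldsymbol{\curl}}(\boldsymbol{z}))\|_{0,T}\lesssim h_T\|\underline{\boldsymbol{\curl}}(\boldsymbol{z})\|_{1,\O}$ via the interpolation estimate \eqref{asympRT} (with $t=1$), then close using $\|\underline{\boldsymbol{\curl}}(\boldsymbol{z})\|_{1,\O}\leq\|\boldsymbol{z}\|_{2,\O}\lesssim\|\curl(\btau)\|_{0,\O}\leq\|\btau\|_{\curl,\O}$ from \eqref{inf-sup-discreta-001} and \eqref{eq:Hel_desc}. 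The factor $h_T$ matches the $h_T^2$ weight in $\eta_T^2$ after squaring, so a discrete Cauchy–Schwarz over the triangulation yields $|\mathfrak{T}_2|\lesssim\{\sum_T\eta_T^2\}^{1/2}\|\btau\|_{\curl,\O}$.

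The delicate step, and the one I would pin down carefully, is the vanishing (or higher-order smallness) of the cross term $\int_\O\underline{\boldsymbol{\curl}}(\bu_h):(\mathrm{Id}-\bPi_h^{\mathbb{NED}^{(\ell)}})\underline{\boldsymbol{\curl}}(\boldsymbol{z})$. One must verify that $\underline{\boldsymbol{\curl}}(\bu_h)$ genuinely belongs to the range where the Nédelec interpolation error is $\L^2$-orthogonal; because $\bu_h\in\mathbf{Q}_h=\textrm{P}_k(\CT_h)^2$ is only piecewise polynomial and generally discontinuous, $\underline{\boldsymbol{\curl}}(\bu_h)$ is elementwise polynomial but not globally conforming, so the orthogonality must be argued elementwise rather than globally, invoking the definition of $\bPi_h^{\mathbb{NED}^{(\ell)}}$ through its degrees of freedom. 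If exact orthogonality fails, the alternative is to absorb this term into $\mathfrak{T}_1$-type edge contributions by a further integration by parts, or to keep it and observe it is of higher order; I would first attempt the clean elementwise orthogonality route and only fall back to the integration-by-parts argument if the degrees of freedom do not align as hoped.
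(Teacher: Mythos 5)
Your decomposition of $\mathfrak{T}_2$ --- inserting $\underline{\boldsymbol{\curl}}(\bu_h)$, bounding the residual factor by elementwise Cauchy--Schwarz, the interpolation estimate \eqref{asympRT} with $t=1$, and then $\|\underline{\boldsymbol{\curl}}(\boldsymbol{z})\|_{1,\O}\leq\|\boldsymbol{z}\|_{2,\O}\lesssim\|\curl(\btau)\|_{0,\O}$ --- is exactly the paper's argument, and that part of your proposal is sound. The genuine gap is the cross term $\int_\O\underline{\boldsymbol{\curl}}(\bu_h):\bigl(\underline{\boldsymbol{\curl}}(\boldsymbol{z})-\bPi_h^{\mathbb{NED}^{(\ell)}}(\underline{\boldsymbol{\curl}}(\boldsymbol{z}))\bigr)$, which you leave conditional (``should vanish or be controllable'') and whose preferred justification does not hold. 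The analogy with $b(\bxi,\bu_h)=0$ is a mis-analogy: that identity used the commuting diagram \eqref{ned1_diagram} to convert the \emph{curl} of the interpolation error into an $\mathcal{R}_h$-projection error, which is $\L^2$-orthogonal to $\mathbf{Q}_h$; the cross term here involves the interpolation error itself, not its curl, and the commuting diagram gives no information about it. Moreover, the DOF-based elementwise orthogonality you propose to ``first attempt'' is supplied only by the \emph{interior} moments of the N\'edelec element, and in the case actually covered by the lemma there are none: Section~\ref{sec:apost} is explicitly restricted to $k=0$, and at lowest order both $\mathbb{NED}^{(1)}_0$ and $\mathbb{NED}^{(2)}_1$ carry edge degrees of freedom only, so their interpolation errors are not $\L^2(T)$-orthogonal to constant tensors.

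What closes the argument --- and what the paper uses --- is a one-line observation you missed: for $k=0$ one has $\bu_h\in\textrm{P}_0(T)^2$ on each $T\in\CT_h$, hence $\underline{\boldsymbol{\curl}}(\bu_h)=\boldsymbol{0}$ elementwise, and the cross term is identically zero with no property of $\bPi_h^{\mathbb{NED}^{(\ell)}}$ invoked at all. (For $k\geq1$ with the first-type family your route could in fact be repaired, since the interior DOFs of $\mathbb{NED}^{(1)}_k$ are moments against $\mathrm{P}_{k-1}(T)^2$ and the rows of $\underline{\boldsymbol{\curl}}(\bu_h)$ lie in $\mathrm{P}_{k-1}(T)^2$; but that is outside the scope of the lemma as the paper states it.) Once this observation is supplied, the remainder of your argument coincides with the paper's proof.
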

\begin{proof}
Using again that $\bu_h\in \textrm{P}_0(T)^2$, for all $T\in\CT_h$, we obtain 
\begin{align*}
\int_{\Omega}\underline{\boldsymbol{\curl}}(\bu_h):\left(\underline{\boldsymbol{\curl}}(\boldsymbol{z})-\bPi_h^{\mathbb{NED}^{(\ell)}}\left(\underline{\boldsymbol{\curl}}(\boldsymbol{z})\right)\right)=0.
\end{align*}
Then, we obtain that
\begin{multline*}
\mathfrak{T}_2
=-\sum_{T\in \CT_{h}}\left[\int_{T}\left(\underline{\boldsymbol{\curl}}(\bu_h)-\frac{1}{\mu}\boldsymbol{\sigma}_{h}^r\right):\left(\underline{\boldsymbol{\curl}}(\boldsymbol{z})-\bPi_h^{\mathbb{NED}^{(\ell)}}\left(\underline{\boldsymbol{\curl}}(\boldsymbol{z})\right)\right)\right]\\
\leq\sum_{T\in \CT_{h}}\left\|\underline{\boldsymbol{\curl}}(\bu_h)-\frac{1}{\mu}\boldsymbol{\sigma}_{h}^r\right\|_{0,T}\|\underline{\boldsymbol{\curl}}(\boldsymbol{z})-\bPi_h^{\mathbb{NED}^{(\ell)}}\left(\underline{\boldsymbol{\curl}}(\boldsymbol{z})\right)\|_{0,T}\\
\lesssim\left\{\sum_{T\in \CT_{h}}h_T^2\left\|\underline{\boldsymbol{\curl}}(\bu_h)-\frac{1}{\mu}\boldsymbol{\sigma}_{h}^r\right\|_{0,T}^2\right\}^{1/2}\|\boldsymbol{z}\|_{2,\O}\lesssim\left\{\sum_{T\in\CT_{h}}\eta_{T}^{2} \right\}^{1/2}\|\btau\|_{\curl,\O},
\end{multline*}
where we have used  Cauchy-Schwarz inequality and the approximation properties \eqref{asympdivRT} and  \eqref{eq:Hel_desc}. This concludes the proof.
\end{proof}

As a consequence of Lemma \ref{lema:apriorie}, Lemma \ref{lema_cota_s}, Remark \ref{eq:hot}, estimate \eqref{eq:sup}, Lemmas \ref{lm_a1} and \ref{lm_a2}, and the definition of the local estimator $\eta_T$, we have the following result 
\begin{lemma}
Let $(\l,\boldsymbol{\sigma}, \bu)\in\R\times\mathbb{H}_{0}\times \mathbf{Q}$ be the solution of \eqref{ec1_kappa0}--\eqref{ec2_kappa0}  and let $(\l_{h}$,$\boldsymbol{\sigma}_h$,$\bu_h)\in\R\times\mathbb{H}_{0,h}\times\mathbf{Q}_h$ be its finite element approximation, given as the  solution of  \eqref{ec1_h}--\eqref{ec2_h}. Then, there exists $h_0$, such that, for all $h < h_0$, there holds.
\begin{align*}
\|\boldsymbol{\sigma}-\boldsymbol{\sigma}_{h}\|_{\curl,\O}+\|\bu-\bu_{h}\|_{0,\O}&\lesssim\left\{\sum_{T\in\CT_{h}}\eta_{T}^{2} \right\}^{1/2}+\|\bu-\Theta_h\bu\|_{0,\O},\\
|\l-\l_h |&\lesssim\sum_{T\in\CT_{h}}\eta_{T}^{2}+\|\bu-\Theta_h\bu\|_{0,\O}^2,
\end{align*}
where the hidden constants are independent of $h$.
\end{lemma}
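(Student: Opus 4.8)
The plan is to assemble the final reliability result by directly combining the pieces already established, so the proof is essentially bookkeeping rather than new estimation. First I would start from Lemma \ref{lema_cota_s}, which gives
$$
\|\boldsymbol{\sigma}-\boldsymbol{\sigma}_{h}\|_{\curl,\O}+\|\bu-\bu_{h}\|_{0,\O}\lesssim\displaystyle\sup_{\underset{\btau\neq\boldsymbol{0}}{\btau\in\mathbb{H}_{0}}}\frac{-a(\boldsymbol{\sigma}_{h},\btau)-b(\btau,\bu_h)}{\|\btau\|_{\curl,\O}}+\text{h.o.t}+\|\Theta_h\bu_{h}-\bu_{h}\|_{0,\O}.
$$
The numerator of the supremum is exactly the quantity decomposed in \eqref{eq:sup} as $\mathfrak{T}_1+\mathfrak{T}_2$, and Lemmas \ref{lm_a1} and \ref{lm_a2} bound each of $\mathfrak{T}_1$ and $\mathfrak{T}_2$ by $\{\sum_T\eta_T^2\}^{1/2}\|\btau\|_{\curl,\O}$. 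Dividing by $\|\btau\|_{\curl,\O}$ and taking the supremum therefore controls the supremum term by $\{\sum_T\eta_T^2\}^{1/2}=\eta$.

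Next I would absorb the remaining two terms. The term $\|\Theta_h\bu_h-\bu_h\|_{0,\O}$ is, by \eqref{eq:local_eta_reduced}, precisely the first contribution to each local indicator $\eta_T$, hence $\|\Theta_h\bu_h-\bu_h\|_{0,\O}\le\eta$ and is already accounted for. For the high order term I would invoke Remark \ref{eq:hot}, which gives $\text{h.o.t}\le C h^s(\|\boldsymbol{\sigma}-\boldsymbol{\sigma}_h\|_{0,\O}+\|\bu-\bu_h\|_{0,\O})+\|\bu-\Theta_h\bu\|_{0,\O}$. The first part of this bound contains the error itself, so I would move it to the left-hand side: choosing $h_0$ small enough that $Ch^s<\tfrac12$ for all $h<h_0$ lets me absorb $Ch^s(\|\boldsymbol{\sigma}-\boldsymbol{\sigma}_h\|_{0,\O}+\|\bu-\bu_h\|_{0,\O})$ into the left-hand side, leaving only the genuinely higher-order remainder $\|\bu-\Theta_h\bu\|_{0,\O}$. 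This yields the first asserted estimate
$$
\|\boldsymbol{\sigma}-\boldsymbol{\sigma}_{h}\|_{\curl,\O}+\|\bu-\bu_{h}\|_{0,\O}\lesssim\eta+\|\bu-\Theta_h\bu\|_{0,\O}.
$$

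For the eigenvalue estimate I would use the identity already recorded in Remark \ref{lema:apriorie} (equivalently the quadratic convergence theorem), namely $|\lambda-\lambda_h|\lesssim\|\boldsymbol{\sigma}-\boldsymbol{\sigma}_h\|_{0,\O}^2+\|\bu-\bu_h\|_{0,\O}^2$, and simply square the first estimate. Since $\|\boldsymbol{\sigma}-\boldsymbol{\sigma}_h\|_{0,\O}\le\|\boldsymbol{\sigma}-\boldsymbol{\sigma}_h\|_{\curl,\O}$, squaring gives $|\lambda-\lambda_h|\lesssim\eta^2+\|\bu-\Theta_h\bu\|_{0,\O}^2=\sum_{T}\eta_T^2+\|\bu-\Theta_h\bu\|_{0,\O}^2$, which is the second claim. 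The only subtle point, and the step I expect to require the most care, is the absorption argument: one must verify that the constant $C$ from Remark \ref{eq:hot} is genuinely uniform in $h$ so that a fixed threshold $h_0$ suffices, and that the term $\|\bu-\Theta_h\bu\|_{0,\O}$ is legitimately treated as higher order (by Lemma \ref{postprocessing}(1) it is $O(h^{1+s})$) rather than folded into $\eta$. Everything else is a direct concatenation of the cited lemmas.
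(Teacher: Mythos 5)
Your proof is correct and takes essentially the same route as the paper: the paper gives no separate written proof for this lemma, stating it simply as a consequence of Lemma \ref{lema_cota_s}, the decomposition \eqref{eq:sup}, Lemmas \ref{lm_a1} and \ref{lm_a2}, Remark \ref{eq:hot}, Remark \ref{lema:apriorie}, and the definition of $\eta_T$ --- precisely the ingredients you combine. Your write-up supplies the bookkeeping the paper leaves implicit, and correctly identifies the absorption of the $Ch^s$ error terms (uniform $C$, as guaranteed in Remark \ref{eq:hot}) as the origin of the threshold $h_0$ in the statement.
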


\subsection{Efficiency}
The aim of this section is to obtain a lower bound for the local indicator \eqref{eq:local_eta_reduced}. To do this task, we will apply
the localization technique based in bubble functions, together with inverse inequalities. In order to present the material, the efficiency 
will be proved in several steps, where each one of these correspond to one of the terms of \eqref{eq:local_eta_reduced}.

Now our task is to bound each of the contributions of $\eta_T$ in \eqref{eq:local_eta_reduced}. We begin with   the term 
$$h_T^2\left\|\underline{\boldsymbol{\curl}}(\bu_h)-\frac{1}{\mu}\boldsymbol{\sigma}_{h}^r \right\|^2_{0,T}.$$
Given  an element $T\in\CT_h$, and using that $\underline{\boldsymbol{\curl}}(\bu)=\bsig^r/\mu$, let us define $\Upsilon_T:=\underline{\boldsymbol{\curl}}(\bu_h)-\boldsymbol{\sigma}_{h}^r/\mu$. Then,  invoking the properties of the  bubble function $\psi_T$  defined in Lemma \ref{lmm:bubble_estimates} we have
\begin{align*}
\|\Upsilon_T\|_{0,T}^2&\lesssim\|\psi_T^{1/2}\Upsilon_T\|_{0,T}^2=\int_T\psi_T\Upsilon_T:\left(\underline{\boldsymbol{\curl}}(\bu_h-\bu)+\frac{1}{\mu}(\bsig^r-\boldsymbol{\sigma}_{h}^r)\right)\\
&\lesssim \|\curl(\psi_T\Upsilon_T)\|_{0,T}\|\bu-\bu_h\|_{0,T}+\|\psi_T\Upsilon_T\|_{0,T}\|\bsig-\boldsymbol{\sigma}_{h}\|_{0,T}\\
&\lesssim h_T^{-1}\|\bu-\bu_h\|_{0,T}+\|\bsig-\boldsymbol{\sigma}_{h}\|_{0,T}\|\Upsilon_T\|_{0,T}.
\end{align*}
Then we have that
\begin{equation}\label{eq:term1}
h_T^2\left\|\underline{\boldsymbol{\curl}}(\bu_h)-\frac{1}{\mu}\boldsymbol{\sigma}_{h}^r \right\|^2_{0,T}\lesssim\|\bu-\bu_h\|_{0,T}+h_T^2\|\bsig-\boldsymbol{\sigma}_{h}\|_{0,T}.\end{equation}

Now we prove  the following result.
\begin{lemma}\label{lmm:invcurl}
Let $\btau_h\in\mathbb{L}^2(\O)$ be a piecewise polynomial of degree $k\geq 0$ on each $T\in\mathcal{T}_h$ such that approximates $\btau\in\mathbb{L}^2(\O)$, where  $\bdiv(\btau)=\boldsymbol{0}$ on each $T\in\mathcal{T}_h$. Then, there holds 
$$
\|\bdiv(\btau_h)\|_{0,T}\lesssim  h_T^{-1}\|\btau-\btau_h\|_{0,T}\quad \forall T\in\mathcal{T}_h,
$$
where the hidden constant is independent of $h$.
\end{lemma}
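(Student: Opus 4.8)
The plan is to run a standard bubble-function efficiency argument, localized on a single element $T\in\mathcal{T}_h$. I would write $\bw_T:=\bdiv(\btau_h)|_T$, which is a vector-valued piecewise polynomial on $T$, so that the first estimate of Lemma \ref{lmm:bubble_estimates} applies and gives $\|\bw_T\|_{0,T}^2\lesssim\|\psi_T^{1/2}\bw_T\|_{0,T}^2=\int_T\psi_T\,\bw_T\cdot\bw_T$. The key structural observation is that, since $\bdiv(\btau)=\boldsymbol{0}$ on $T$ by hypothesis, one factor $\bw_T=\bdiv(\btau_h)$ can be rewritten as $-\bdiv(\btau-\btau_h)$; this is precisely what introduces the error $\btau-\btau_h$ into the estimate.

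Next I would integrate by parts. Taking the test field $\bv:=\psi_T\bw_T$, which vanishes on $\partial T$ because $\psi_T=0$ there (property 1 of Lemma \ref{lmm:bubble_estimates}), the boundary contribution disappears and the tensor--vector identity $\int_T\bdiv(\btau-\btau_h)\cdot\bv=-\int_T(\btau-\btau_h):\nabla\bv$ yields $\int_T\psi_T\,\bw_T\cdot\bw_T=-\int_T\bdiv(\btau-\btau_h)\cdot(\psi_T\bw_T)=\int_T(\btau-\btau_h):\nabla(\psi_T\bw_T)$. The Cauchy--Schwarz inequality then bounds the right-hand side by $\|\btau-\btau_h\|_{0,T}\,\|\nabla(\psi_T\bw_T)\|_{0,T}$.

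The final ingredient is the inverse inequality of Lemma \ref{inversein} applied to the polynomial $\psi_T\bw_T$, which gives $\|\nabla(\psi_T\bw_T)\|_{0,T}\lesssim h_T^{-1}\|\psi_T\bw_T\|_{0,T}\lesssim h_T^{-1}\|\bw_T\|_{0,T}$, the last step using $0\le\psi_T\le1$. Combining the three bounds produces $\|\bw_T\|_{0,T}^2\lesssim h_T^{-1}\|\btau-\btau_h\|_{0,T}\,\|\bw_T\|_{0,T}$, and cancelling one factor of $\|\bw_T\|_{0,T}$ gives the claimed estimate $\|\bdiv(\btau_h)\|_{0,T}\lesssim h_T^{-1}\|\btau-\btau_h\|_{0,T}$.

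I expect the only delicate points to be bookkeeping ones rather than genuine estimates: verifying that $\psi_T\bw_T$ is an admissible test field (a polynomial vanishing on $\partial T$), so that both Lemma \ref{lmm:bubble_estimates} and Lemma \ref{inversein} legitimately apply, and keeping the tensor--vector integration by parts consistent with the sign and contraction conventions fixed in the preliminaries. The scaling factor $h_T^{-1}$ is produced entirely by the inverse inequality, and no sharp or problem-specific estimate is needed.
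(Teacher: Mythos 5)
Your proof is correct and follows essentially the same route as the paper's: the bubble-function lower bound, insertion of the error via $\bdiv(\btau)=\boldsymbol{0}$, integration by parts against $\psi_T\bdiv(\btau_h)$ (with the boundary term killed by $\psi_T=0$ on $\partial T$), Cauchy--Schwarz, the inverse inequality, and cancellation of one factor of $\|\bdiv(\btau_h)\|_{0,T}$. The only differences are notational (your $\bw_T$ shorthand and a sign convention in how the error is introduced), so nothing further is needed.
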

\begin{proof}
From the bubble functions properties of Lemma \ref{lmm:bubble_estimates}, integrating by parts, using the fact that
$\psi_T= 0$ on $\partial T$, and applying Cauchy-Schwarz inequality, we obtain
\begin{align*}
\|\bdiv(\btau_h)\|_{0,T}^2&\lesssim \|\psi_T^{1/2}\bdiv(\btau_h)\|_{0,T}^2=\int_T\psi_T\bdiv(\btau_h)\cdot\bdiv(\btau_h-\btau)\\
&=-\int_T\nabla\left(\psi_T\bdiv(\btau_h)\right):(\btau_h-\btau)\lesssim \|\nabla\left(\psi_T\bdiv(\btau_h)\right)\|_{0,T}\|\btau_h-\btau\|_{0,T}\\
&\lesssim h_T^{-1}\|\btau_h-\btau\|_{0,T}\|\psi_T\bdiv(\btau_h)\|_{0,T}\lesssim h_T^{-1}\|\btau_h-\btau\|_{0,T}\|\bdiv(\btau_h)\|_{0,T}.
\end{align*}
This conclude the proof.
\end{proof}
\begin{lemma}\label{lmm:invcurl_fron}
Let $\btau_h\in\mathbb{L}^2(\O)$ be a piecewise polynomial of degree $k\geq 0$ on each $T\in\mathcal{T}_h$ such that approximates $\btau\in\mathbb{L}^2(\O)$, where  $\bdiv(\btau)=\boldsymbol{0}$ on each $T\in\mathcal{T}_h$. Then, there holds
$$
\left\|\jumpp{\btau_h}\right\|_{0,e}\lesssim h_e^{-1/2}\|\btau-\btau_h\|_{0,\omega_e}\quad \forall e\in\mathcal{E}_h,
$$
where the hidden constant is independent of $h$.
\end{lemma}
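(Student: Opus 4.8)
The plan is to follow the classical bubble-function localization argument for residual efficiency, now specialized to the normal jump. Fix an interior edge $e\in\mathcal{E}_h(\O)$ shared by two elements $T$ and $T'$, and set $\omega_e=T\cup T'$; write $\boldsymbol{p}:=\jumpp{\btau_h}$, a vector-valued polynomial of degree at most $k$ on $e$. The crucial preliminary observation is that, since $\btau$ is divergence-free, it possesses a continuous normal trace, so $\jumpp{\btau}=\boldsymbol{0}$ and therefore $\jumpp{\btau_h}=-\jumpp{\btau-\btau_h}$. Using the edge-bubble $\psi_e$ and the extension operator $L$ of Lemma \ref{lmm:bubble_estimates}, I would test with $\bw:=\psi_e L(\boldsymbol{p})$, which is supported in $\omega_e$ and vanishes on $\partial\omega_e\setminus e$. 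Property (2) of Lemma \ref{lmm:bubble_estimates} gives $\|\boldsymbol{p}\|_{0,e}^2\lesssim\|\psi_e^{1/2}\boldsymbol{p}\|_{0,e}^2=\int_e\psi_e\,\boldsymbol{p}\cdot\boldsymbol{p}$, and since $\bw|_e=\psi_e\boldsymbol{p}$ while $\jumpp{\btau-\btau_h}=-\boldsymbol{p}$ on $e$, this reduces the problem to estimating $\int_e\jumpp{\btau-\btau_h}\cdot\bw$.

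The second step is to convert this edge integral into element integrals via integration by parts on $T$ and $T'$. Because $\bw$ vanishes on the outer part of $\partial\omega_e$, summing the tensor divergence identity over $T''\in\{T,T'\}$ collapses the boundary contributions onto $e$ and, using $\bdiv(\btau)=\boldsymbol{0}$ on each element, yields
\begin{equation*}
\int_e\jumpp{\btau-\btau_h}\cdot\bw=\sum_{T''\in\{T,T'\}}\left(-\int_{T''}\bdiv(\btau_h)\cdot\bw+\int_{T''}(\btau-\btau_h):\nabla\bw\right).
\end{equation*}

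The final step is to bound the two element terms. For the first I would invoke Lemma \ref{lmm:invcurl}, which gives $\|\bdiv(\btau_h)\|_{0,T''}\lesssim h_{T''}^{-1}\|\btau-\btau_h\|_{0,T''}$; for the second I would use the inverse inequality of Lemma \ref{inversein} to obtain $\|\nabla\bw\|_{0,T''}\lesssim h_{T''}^{-1}\|\bw\|_{0,T''}$. In both terms the bubble scaling (property (3) of Lemma \ref{lmm:bubble_estimates}), together with $\psi_e\le 1$, controls $\|\bw\|_{0,T''}\lesssim h_e^{1/2}\|\boldsymbol{p}\|_{0,e}$. Combining these with Cauchy--Schwarz and the shape-regularity relation $h_{T''}\sim h_e$ produces $\bigl|\int_e\jumpp{\btau-\btau_h}\cdot\bw\bigr|\lesssim h_e^{-1/2}\|\btau-\btau_h\|_{0,\omega_e}\,\|\boldsymbol{p}\|_{0,e}$; chaining this with the first-step inequality and cancelling one factor of $\|\boldsymbol{p}\|_{0,e}$ gives the claim, with boundary edges handled identically after replacing the jump by the one-sided normal trace. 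I expect the only delicate point to be the bookkeeping of the powers of $h$: one must check that the two factors $h_e^{-1}$ coming from $\bdiv(\btau_h)$ and from $\nabla\bw$ combine with the $h_e^{1/2}$ from the bubble scaling to leave \emph{exactly} the weight $h_e^{-1/2}$, and that every hidden constant remains independent of $h$ through shape regularity.
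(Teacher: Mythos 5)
Your proposal is correct and follows essentially the same route as the paper's proof: the edge-bubble test function $\psi_e L(\jumpp{\btau_h})$, integration by parts over $\omega_e$ exploiting $\jumpp{\btau}=\boldsymbol{0}$ and $\bdiv(\btau)=\boldsymbol{0}$, Lemma \ref{lmm:invcurl} for the volume divergence term, and the inverse and bubble-scaling estimates to arrive at the weight $h_e^{-1/2}$. The only cosmetic difference is that you integrate by parts the difference $\btau-\btau_h$ in one stroke, whereas the paper integrates by parts $\btau_h$, writes the corresponding identity $0=\int_{\omega_e}\bdiv(\btau)\cdot\psi_e L(\jumpp{\btau_h})+\int_{\omega_e}\btau:\nabla(\psi_e L(\jumpp{\btau_h}))$ for $\btau$, and subtracts; these are algebraically identical.
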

\begin{proof}
Given an edge $e\in \mathcal{E}_h$, we denote by $\bw_{h}:=\jumpp{\btau_{h}}$  the corresponding jump of $\btau_{h}$. Then, employing Lemma \ref{lmm:bubble_estimates} and integrating by parts on each triangle, we obtain
of $\omega_{e}$, we obtain
\begin{multline*}
\|\bw_{h}\|_{0,e}^2\lesssim \|\psi_e^{1/2}\bw_{h}\|_{0,e}^2=\|\psi_e^{1/2}L(\bw_{h})\|_{0,e}^2=\int_e\psi_e L(\bw_{h})\cdot\jumpp{\btau_h}\\
=\int_{\omega_{e}}\bdiv(\btau_{h})\cdot\psi_e L(\bw_{h})+\int_{\omega_{e}}\btau_{h}:\nabla\left(\psi_eL(\bw_h)\right).
\end{multline*}
Now, since $\jumpp{\btau}=\boldsymbol{0}$, we have
$$
0=\int_{\omega_{e}}\bdiv(\btau)\cdot\psi_e L(\bw_{h})+\int_{\omega_{e}}\btau:\nabla\left(\psi_eL(\bw_h)\right).
$$
Thus, we have the following estimate
\begin{multline*}
\|\bw_{h}\|_{0,e}^2\lesssim \int_{\omega_{e}}\bdiv(\btau_{h}-\btau)\cdot\psi_e L(\bw_{h})+\int_{\omega_{e}}(\btau_{h}-\btau):\nabla\left(\psi_eL(\bw_h)\right)\\
\lesssim \|\bdiv(\btau_{h})\|_{0,\omega_{e}}\|\psi_e L(\bw_{h})\|_{0,\omega_{e}}+\|\btau_{h}-\btau\|_{0,\omega_{e}}\|\nabla\left(\psi_eL(\bw_h)\right)\|_{0,\omega_{e}}.
\end{multline*}
Now, applying Lemma \ref{lmm:invcurl} to each element of $\omega_{e}$, using that $h_{T_{e}}^{-1}\leq h_{e}^{-1}$, together with Lemmas \ref{inversein} and \ref{lmm:bubble_estimates}, we obtain
\begin{align*}
\|\bw_{h}\|_{0,e}^2\lesssim h_{e}^{-1/2}\|\btau_{h}-\btau\|_{0,\omega_{e}}\|\bw_h\|_{0,e}.
\end{align*}
This conclude the proof.
\end{proof}

As a consequence of the above lemma, we have the following results
\begin{equation}
h_T^2\left\|\bdiv\left(\dfrac{1}{\mu}\bsig_h^r\right)\right\|_{0,T}^2\lesssim \|\bsig-\bsig_h\|_{0,T}^2,\,\,\,\,\,\text{and}\,\,\,
\quad h_e\left\|\jumpp{\dfrac{1}{\mu}\bsig_h^r}\right\|_{0,e}^2\lesssim\|\bsig-\bsig_h\|_{0,\omega_e}^2,\label{eq:term22}
\end{equation}
for all  $e\in \mathcal{E}_{h}(\O)$, and the hidden constants are independent of $h$.
Finally, for the  term $\|\Theta_h\bu_h-\bu_h\|_{0,T}^2$, we add and subtract $\Theta_h\bu$ and $\bu$, apply triangle inequality, and Lemma \ref{postprocessing}, leading to 	
\begin{equation}
\label{eq:super22}
\|\Theta_h\bu_h-\bu_h\|_{0,T}^2\lesssim\|\bu-\bu_h\|_{0,T}^2+\|\Theta_h\bu_h-\Theta_h\bu\|_{0,T}^2+\|\Theta_h\bu-\bu\|_{0,T}^2.
\end{equation}
Note that the last term of \eqref{eq:super22} is asymptotically negligible thanks to Lemma \ref{postprocessing}.

Gathering the previous results, namely \eqref{eq:term1}--\eqref{eq:super22}, we are in a position to establish the efficiency $\eta$, which is stated in the following result.
\begin{theorem}[Efficiency]
The following estimate holds
$$
\eta^2:=\sum_{T\in\CT_{h}}\eta_{T}^{2} \lesssim\|\bu-\bu_h\|_{0,\Omega}^2 + \|\boldsymbol{\sigma}-\boldsymbol{\sigma}_h\|_{0,\Omega}^2+\text{h.o.t},
$$
where the hidden constant is independent of $h$ and the discrete solution.
\end{theorem}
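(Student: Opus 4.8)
The plan is to assemble the local efficiency bounds already obtained for the individual contributions of $\eta_T^2$ and then sum over the mesh, taking care with the overlap of the edge patches and with the separation of the genuinely high-order part. Recall that $\eta_T^2$ consists of five pieces: the postprocessing term $\|\Theta_h\bu_h-\bu_h\|_{0,T}^2$, the constitutive residual $h_T^2\|\underline{\boldsymbol{\curl}}(\bu_h)-\frac{1}{\mu}\bsig_h^r\|_{0,T}^2$, the equilibrium residual $h_T^2\|\bdiv(\frac{1}{\mu}\bsig_h^r)\|_{0,T}^2$, and the interior and boundary edge terms. The local bounds \eqref{eq:term1}, \eqref{eq:term22} and \eqref{eq:super22} already control the first four of these, the boundary edge term being handled exactly as in Lemma \ref{lmm:invcurl_fron} applied to $e\in\mathcal{E}_h(\partial\O)$ with $\jumpp{\cdot}$ read as the one-sided trace, so what remains is essentially bookkeeping.

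First I would sum \eqref{eq:term1} and \eqref{eq:term22} over all $T\in\CT_h$. The element-wise contributions $\|\bu-\bu_h\|_{0,T}^2$ and $\|\bsig-\bsig_h\|_{0,T}^2$ add up directly to the global squared norms. For the edge contributions, which \eqref{eq:term22} bounds by the norm over the patch $\omega_e$, I would invoke the shape-regularity of $\{\CT_h\}_{h>0}$: since each element belongs to a uniformly bounded number of patches, the finite-overlap property yields $\sum_{T\in\CT_h}\sum_{e\in\mathcal{E}(T)}\|\bsig-\bsig_h\|_{0,\omega_e}^2\lesssim\|\bsig-\bsig_h\|_{0,\O}^2$ with a constant depending only on the shape-regularity. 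Absorbing the extra mesh-size factors using that $h_T$ is uniformly bounded, these four pieces together are controlled by $\|\bu-\bu_h\|_{0,\O}^2+\|\bsig-\bsig_h\|_{0,\O}^2$.

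Next I would treat the postprocessing term by summing \eqref{eq:super22}. Its middle term sums, by the $\L^2$-stability of the postprocessing operator (Lemma \ref{postprocessing}, item 3), as $\sum_{T}\|\Theta_h(\bu-\bu_h)\|_{0,T}^2=\|\Theta_h(\bu-\bu_h)\|_{0,\O}^2\lesssim\|\bu-\bu_h\|_{0,\O}^2$, while the first term sums directly to $\|\bu-\bu_h\|_{0,\O}^2$. The only piece not already of the desired form is $\sum_T\|\Theta_h\bu-\bu\|_{0,T}^2$; by the approximation property of Lemma \ref{postprocessing} (item 1) this is of order $h^{2(1+s)}\|\bu\|_{1+s,\O}^2$, hence asymptotically negligible and collected into the high-order term. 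Gathering the three groups of estimates then gives the claimed bound.

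The substantive work lies in the local estimates feeding into this theorem; the present argument is mainly organizational. If there is a delicate point, it is twofold: verifying that the boundary edge term admits the same bubble-function treatment as the interior jumps, so that it too is controlled by $\|\bsig-\bsig_h\|_{0,\omega_e}$, and ensuring that the finite-overlap constant in the patch summation is genuinely independent of $h$, which is precisely where shape-regularity of the family $\{\CT_h\}_{h>0}$ is essential. With those two points in hand, the summation and the relegation of $\|\Theta_h\bu-\bu\|_{0,\O}^2$ to the high-order term complete the proof.
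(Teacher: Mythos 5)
Your proposal is correct and follows essentially the same route as the paper, whose own proof is a one-line assembly of \eqref{eq:term1}--\eqref{eq:term22} and Lemma \ref{postprocessing}. You merely make explicit details the paper leaves implicit, namely the finite-overlap summation of the patch terms $\|\bsig-\bsig_h\|_{0,\omega_e}$ and the bubble-function treatment of the boundary edge term, which indeed works as you claim because $\frac{1}{\mu}\bsig^r\bn_e=\underline{\curl}(\bu)\bn_e=\boldsymbol{0}$ on $\partial\O$ by the homogeneous Dirichlet condition.
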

\begin{proof}
The proof is  a consequence of \eqref{eq:term1}--\eqref{eq:term22} and Lemma \ref{postprocessing}.
\end{proof}
\begin{remark}
Through our paper, we have considered a formulation that eliminates the pressure, which can be recovered by a postprocess of the stress tensor.
However, it is possible to consider a formulation in terms of the velocity, pressure and velocity as the one studied in \cite{MR2835711} for the source problem. This leads to a more expensive finite element scheme, but flexible in the choice of finite elements. All the computations that we performed along our paper, can be replicated to this formulation that incorporates the pressure.
\end{remark}

\section{Numerical experiments}
\label{sec:numerics}
In this section we report some numerical tests in order to assess the performance of the proposed mixed element methods. We divide this section into two parts: in the first part, we are interested in the computation of the spectrum and the order of convergence for the eigenvalues. This is with the goal to verify the accuracy of the methods  and compare the methods. The second part is related to assess  the performance of the proposed a posteriori error estimator.


We have implemented the discrete eigenvalue problem in a FEniCS code \cite{logg2012automated,AlnaesBlechta2015a}. The rates of convergence have been computed with a least-square fitting. 

With the computed results at hand, we compare the schemes that only differ on the $\mathbb{H}(\curl,\O)$ finite element space. 
In what follows, $N$ denotes the mesh resolution, with $h\sim N^{-1}$, and $\text{dof}$ denotes the degrees of freedom, which will depends on the numerical scheme used.

In each test we plot selected eigenfunctions. The velocity field is recovered directly from solving the eigenproblem, whereas the pressure and vorticity are recovered in postprocessing by
$$
p_h=-\frac{1}{2}(\bsig_h:\mathbb{J}),\qquad \underline{\curl}(\bu_h)=\frac{1}{\mu}(\bsig_h+ p_h\mathbb{J}).
$$
Finally, we denote by $\mathrm{P}_k^{2}\text{-}\mathbb{NED}^{(\ell)}_{\ell+k-1}$, with $\ell\in\{1,2\}$ and $k=0,1,2$, the numerical scheme using piecewise elements of order $k$ to approximate $\bu$ and the Nédelec family $\mathbb{NED}^{(\ell)}_{\ell+k-1}$ of order $\ell+k-1$ to approximate $\bsig$.
\subsection{Test 1: Square}
In this test we consider as computational domain the square $\O:=(-1,1)^2$, where the number of elements scales as $2N^2$. Examples of meshes used in the example are depicted in Figure \ref{meshes_square}.
\begin{figure}
\centering
\begin{minipage}{0.35\linewidth}
\centering\includegraphics[scale=0.15,trim= 0 5.5cm 0 5.5cm, clip]{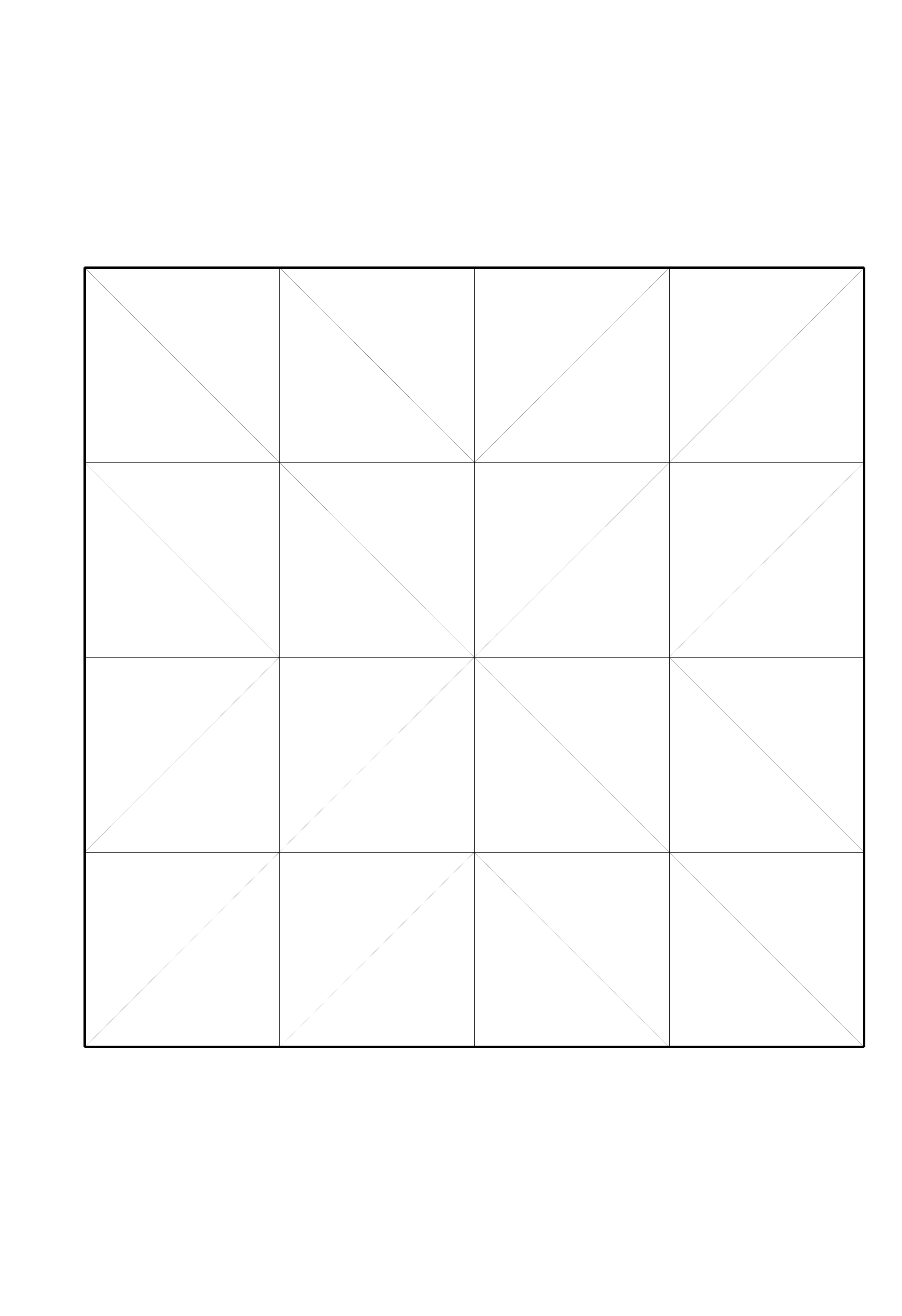}
\end{minipage}
\begin{minipage}{0.35\linewidth}
\centering\includegraphics[scale=0.15,trim= 0 5.5cm 0 5.5cm, clip]{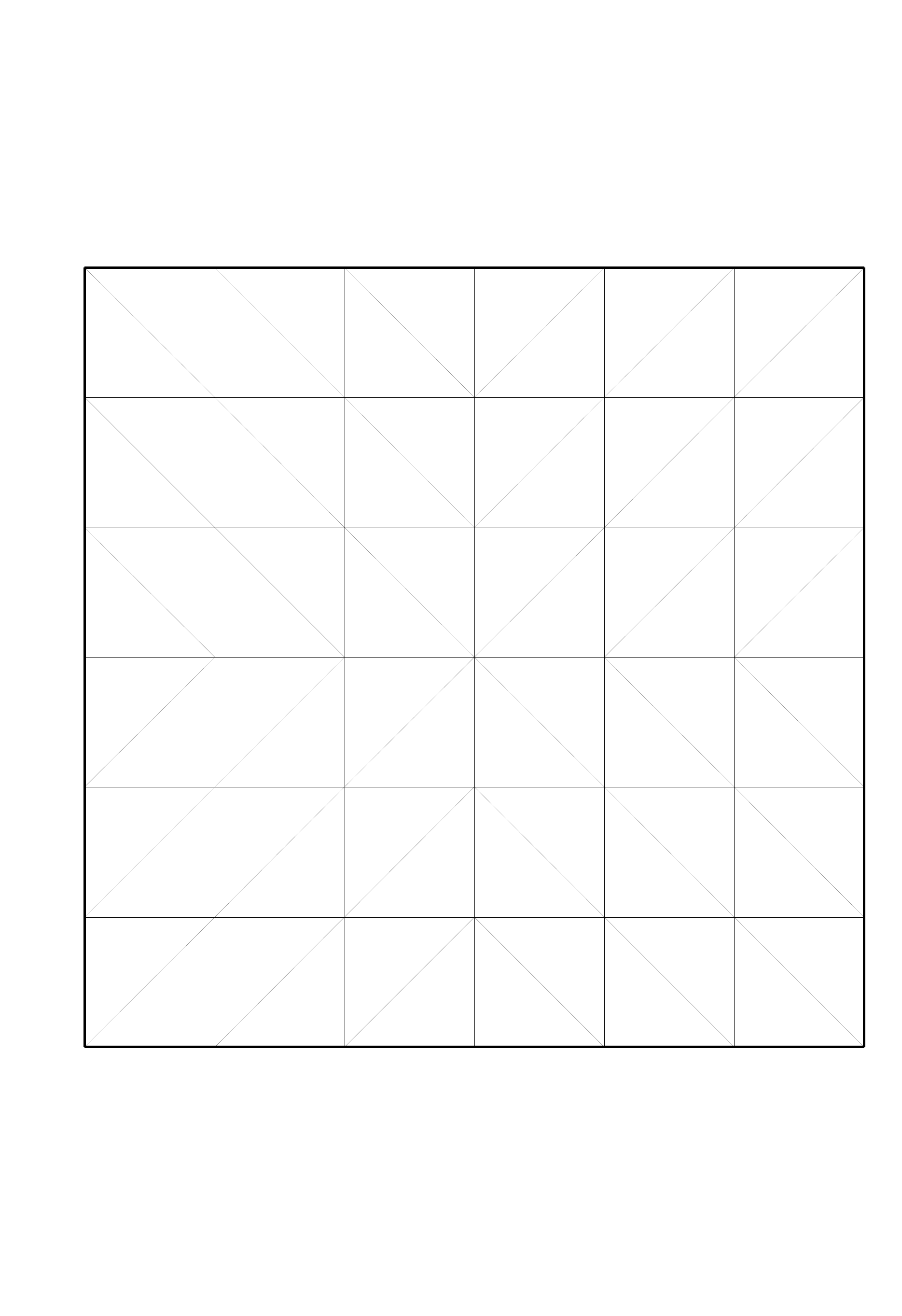}
\end{minipage}
\caption{Test 1. Examples of the meshes used in the unit square.}
\label{meshes_square}
\end{figure}
The convexity of this domain allows to obtain sufficiently smooth eigenfunctions. This implies that the convergence rates will be optimal, i.e., a behavior $\mathcal{O}(h^{2(k+1)})$, for $k=0,1,2$, is expected.
\begin{table}
{\footnotesize
\begin{center}
\caption{Test 1. Lowest computed eigenvalues for polynomial degrees $k=0, 1, 2$ using the $\mathrm{P}_k^{2}\text{-}\mathbb{NED}^{(1)}_k$   scheme. }
\begin{tabular}{c |c c c c |c| c|c}
\toprule
$k $        & $N=20$             &  $N=30$         &   $N=40$         & $N=50$ & Order & $\lambda_{extr}$&\cite{MR2473688} \\ 
\midrule
& 13.07172& 13.07948  &  13.08235   & 13.08371 & 1.88& 13.08636 &13.086    \\
& 22.92407& 22.98365  &  23.00442   & 23.01402 & 2.03 &23.03084 &23.031   \\
\multirow{2}{0.15cm}{0}
& 22.92407& 22.98365  &  23.00442   & 23.01402&  2.03&23.03084&23.031    \\
& 31.92158& 31.99380   & 32.01930  & 32.03116&  2.00& 32.05232&32.053   \\
& 38.18216& 38.37946   & 38.44657  & 38.47729&  2.09& 38.52901 &38.532   \\

\hline

&13.08610&13.08615&13.08616 &13.08617&  3.56&13.08617&13.086      \\
&23.03127&23.03112&23.03110 &23.03110&  4.60&23.03109&23.031     \\
\multirow{2}{0.15cm}{1} 
&23.03127&23.03112&23.03110 &23.03110&  4.60&23.03109&23.031    \\
&32.05268&32.05242&32.05240 &32.05239&  5.59&32.05239&32.053     \\
&38.53319&38.53172&38.53147 &38.53141&  4.04&38.53136&38.532     \\

\hline

& 13.08617 &13.08617&13.08617&13.08617  &  5.79 &13.08617& 13.086   \\
& 23.03109 &23.03109&23.03109&23.03109  &  5.71 &23.03109& 23.031      \\
\multirow{2}{0.15cm}{2}   
& 23.03109 &23.03109&23.03109&23.03109 &   5.71 &23.03109 & 23.031     \\
& 32.05238 &32.05239&32.05239&32.05239 &   5.42 &32.05239& 32.053    \\
& 38.53137 &38.53136&38.53136&38.53136 &   6.02 &38.53136& 38.532    \\

\bottomrule             
\end{tabular}
\label{tabla:square}
\end{center}}

\end{table}
\begin{table}
{\footnotesize
\begin{center}
\caption{Test 1. Lowest computed eigenvalues for polynomial degrees $k$$\,=\,$$0$,$\,1$,$\,2$  using the $\mathrm{P}_k^{2}\text{-}\mathbb{NED}^{(2)}_{k}$   scheme. }
\begin{tabular}{c |c c c c |c| c| c}
\toprule
$k $        & $N=20$             &  $N=30$         &   $N=40$         & $N=50$ & Order & $\lambda_{extr}$&\cite{MR2473688} \\ 
\midrule
& 13.18088& 13.12837  &   13.10993  & 13.10138 & 2.02& 13.08631 &13.086    \\
& 23.32433& 23.16178  &   23.10467  & 23.07821 & 2.02 &23.03156 &23.031   \\
\multirow{2}{0.15cm}{0}
& 23.32433& 23.16178  &   23.10467  & 23.07821&  2.02&23.03156&23.031    \\
& 32.61702& 32.30485   &  32.19470  & 32.14355& 2.00& 32.05163&32.053   \\
& 39.35261& 38.89728   &  38.73736  & 38.66325&  2.02&38.53259 &38.532   \\

\hline

&13.08642&13.08622&13.08618 &13.08617&  4.00&13.08617& 13.086      \\
&23.03240&23.03135&23.03118 &23.03113&  4.00&23.03109& 23.031     \\
\multirow{2}{0.15cm}{1} 
&23.03240&23.03135&23.03118 &23.03113&  4.00&23.03109& 23.031    \\
&32.05619&32.05315&32.05263 &32.05249&  3.98&32.05239& 32.053     \\
&38.53707&38.53250&38.53172 &38.53151&  4.00&38.53136& 38.532     \\

\hline

& 13.08617 &13.08617&13.08617&13.08617  &  6.13&13.08617& 13.086   \\
& 23.03110 &23.03109&23.03109&23.03109  &  6.06 &23.03109& 23.031      \\
\multirow{2}{0.15cm}{2}   
& 23.03110 &23.03109&23.03109&23.03109 &  6.06  &23.03109 & 23.031     \\
& 32.05240 &32.05239&32.05239&32.05239 &   6.04 &32.05239& 32.053    \\
& 38.53138 &38.53136&38.53136&38.53136 &   6.00 &38.53136& 38.532    \\
\bottomrule             
\end{tabular}\label{tabla:square-NED2}
\end{center}}
\end{table}

In Table \ref{tabla:square} we report the first five eigenvalues computed with the $\mathrm{P}_k^{2}\text{-}\mathbb{NED}^{(1)}_k$   scheme, considering several refinement levels and $k=0,1,2$. The column 
$\lambda_{extr}$ shows extrapolated values, obtained with a least square fitting. The values are compared those of \cite{MR2473688}, where a similar experiment was performed.

For $k=0,1$ the order of approximation  is clearly $\mathcal{O}(N^{-(k+1)})$. Meanwhile for $k=2$ the computed convergence for the fourth eigenvalue is lower than optimal. This is expected since for this numerical scheme, the computed eigenvalues on each refinement are very close to the extrapolated value, which affect the convergence rate. However, we observe that they match with those from the literature.

On the other hand, Table \ref{tabla:square-NED2} shows the computed eigenvalues when using the $\mathrm{P}_k^{2}\text{-}\mathbb{NED}^{(2)}_{k+1}$   scheme,  where we observe that an optimal rate of convergence is reached for all choices of $k$. In this case, the deterioration of the convergence order for $k=2$ is not observed since the eigenvalues calculated with all possible decimal places always remain at a sufficient distance from the extrapolated value. This suggests a superior stability of the $\mathrm{P}_k^{2}\text{-}\mathbb{NED}^{(2)}_{k+1}$ scheme at higher orders. 

In Figure \ref{figura:error-cuadrado}, a comparison of the error behavior between the two schemes is observed. We report curves for $k=1,2$ since for  $k=0$ the results are similar. Here, we consider the relative errors $e_{\lambda_i}$, for $i=1,...,5$, where
$$
e_{\lambda_i}:=\frac{\vert \lambda_{h_i}-\lambda_{extr_i}\vert}{\vert \lambda_{extr_i}\vert}.
$$
Also, we denote by $e_{\lambda_i}(\mathbb{NED}_k^{(1)})$ and $e_{\lambda_i}(\mathbb{NED}_{k+1}^{(2)})$ the relative errors obtained using $\mathrm{P}_k^{2}\text{-}\mathbb{NED}_k^{(1)}$   and $\mathrm{P}_k^{2}\text{-}\mathbb{NED}_{k+1}^{(2)}$   schemes, respectively. It is clear that the slopes of the methods behaves like $\mathcal{O}(h^{2(k+1)})$.

For completeness, in Figure \ref{figura:modos-cuadrado} we depict the velocity field and the postprocessed pressure on the square domain for the lowest computed eigenvalue. In Figure \ref{figura:modos-cuadrado-vorticidad} we present the postprocessed vorticity components for the first computed eigenfunction.
\begin{figure}
\centering\begin{minipage}{0.48\linewidth}
\centering\includegraphics[scale=0.45]{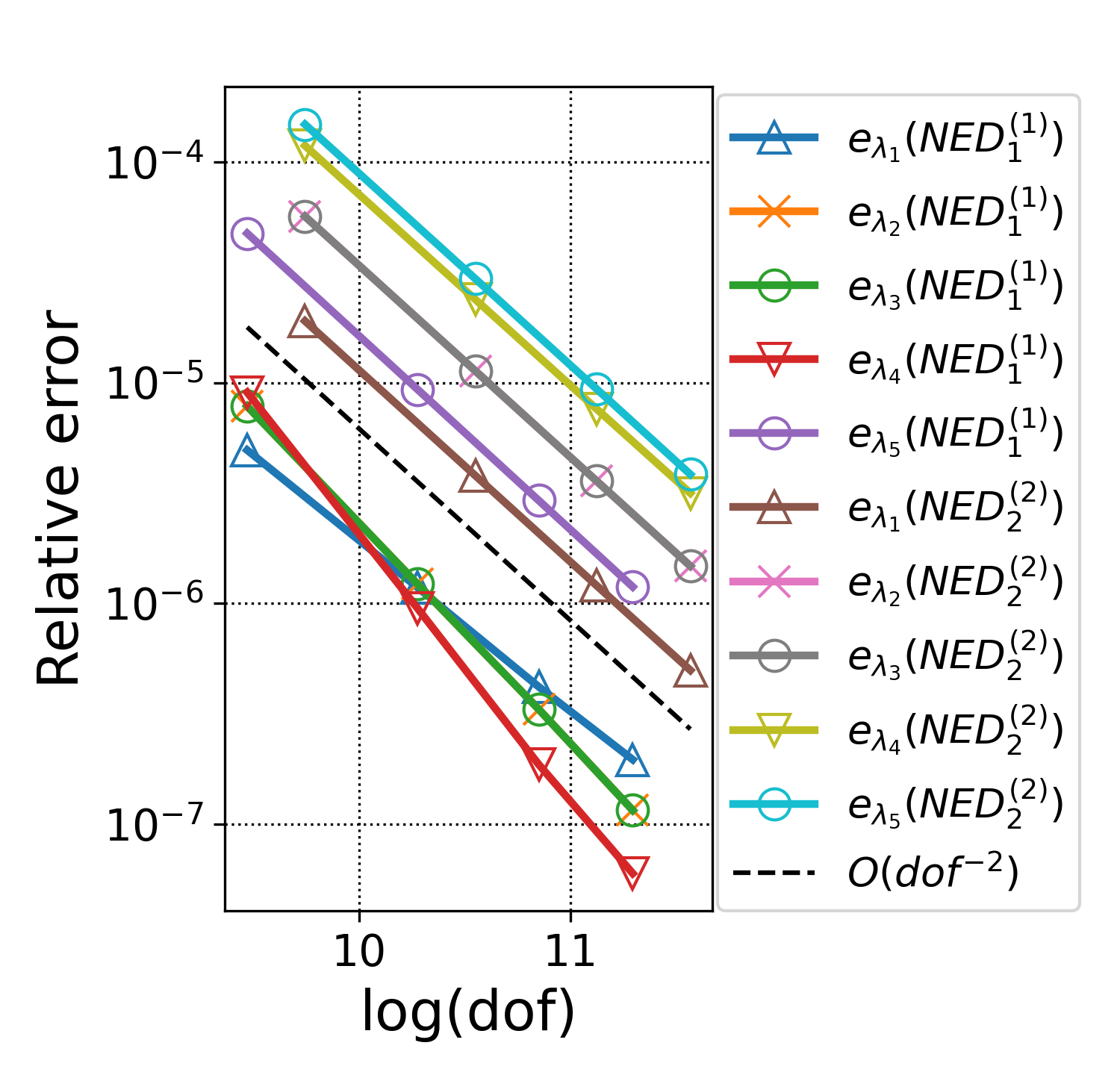}
\end{minipage}
\begin{minipage}{0.48\linewidth}
\centering\includegraphics[scale=0.45]{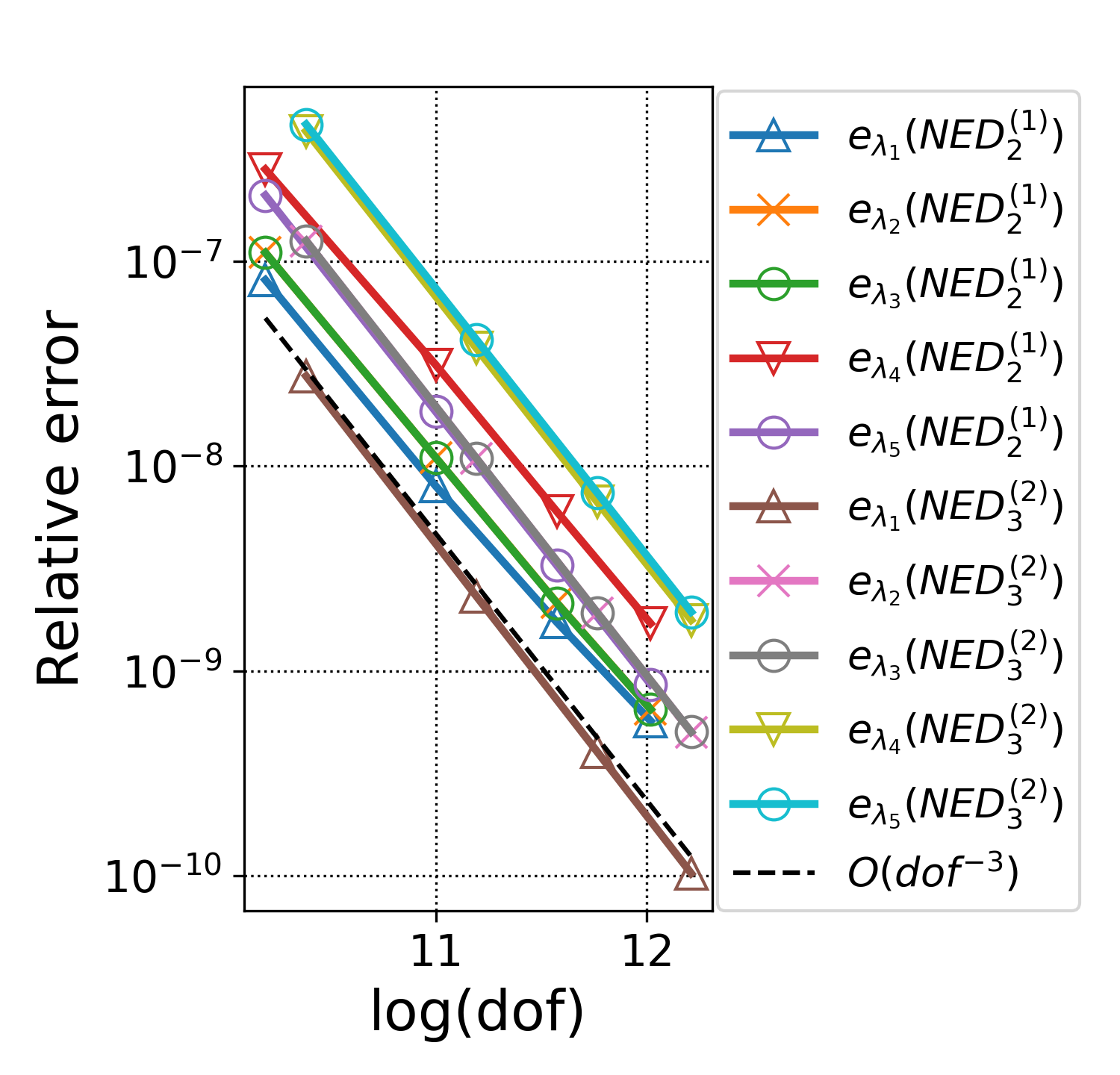}
\end{minipage}
\caption{Test 1. Comparison of the eigenvalues error curves in the square domain using $\mathrm{P}_k^{2}\text{-}\mathbb{NED}^{(1)}_{k}$ and $\mathrm{P}_k^{2}\text{-}\mathbb{NED}^{(2)}_{k+1}$, for $k=1,2$.}
\label{figura:error-cuadrado}
\end{figure}
\begin{figure}
\centering\begin{minipage}{0.45\linewidth}
\centering\includegraphics[scale=0.065, trim= 32cm 3cm 32cm 3cm, clip]{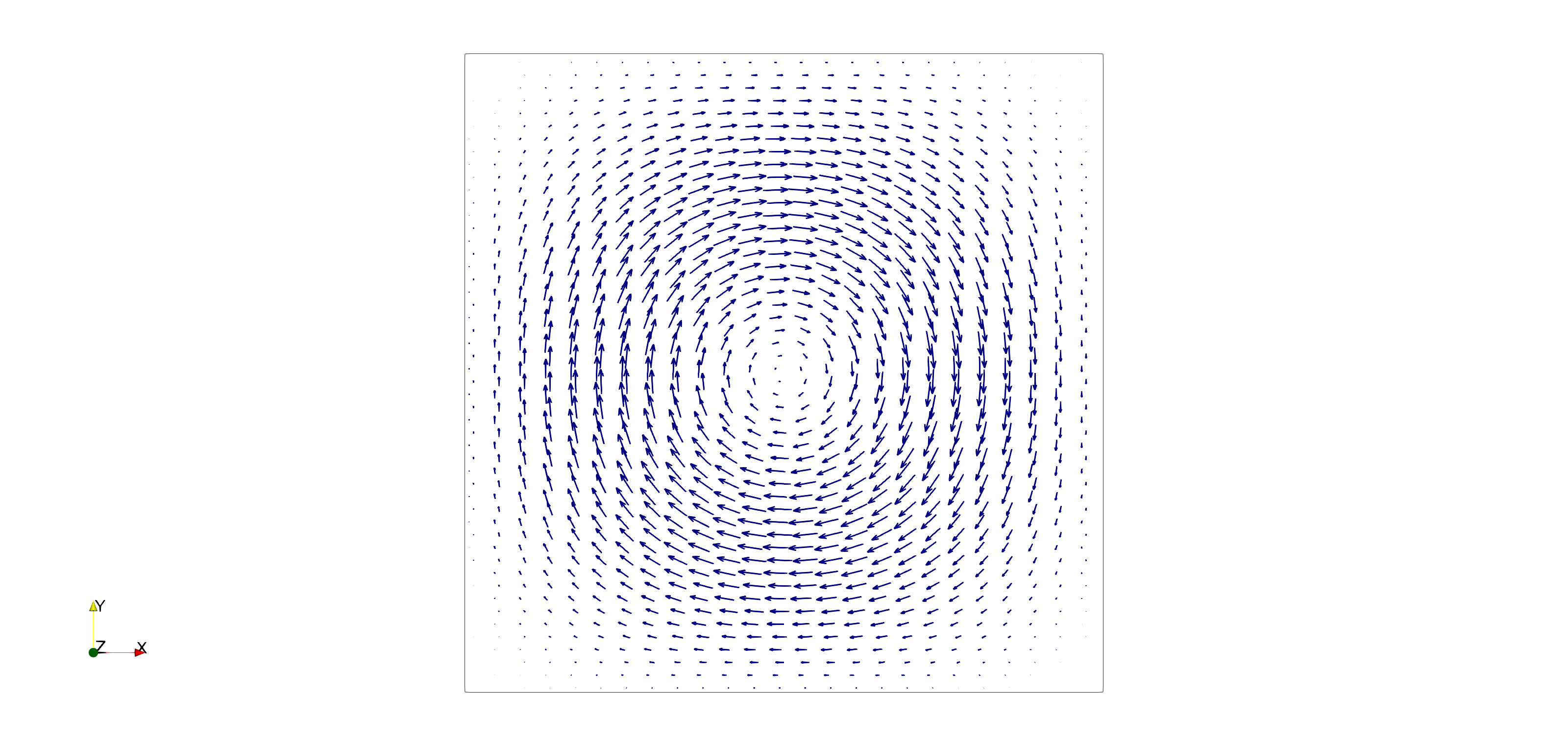}
\end{minipage}
\begin{minipage}{0.45\linewidth}
\centering\includegraphics[scale=0.065, trim= 32cm 3cm 32cm 3cm, clip]{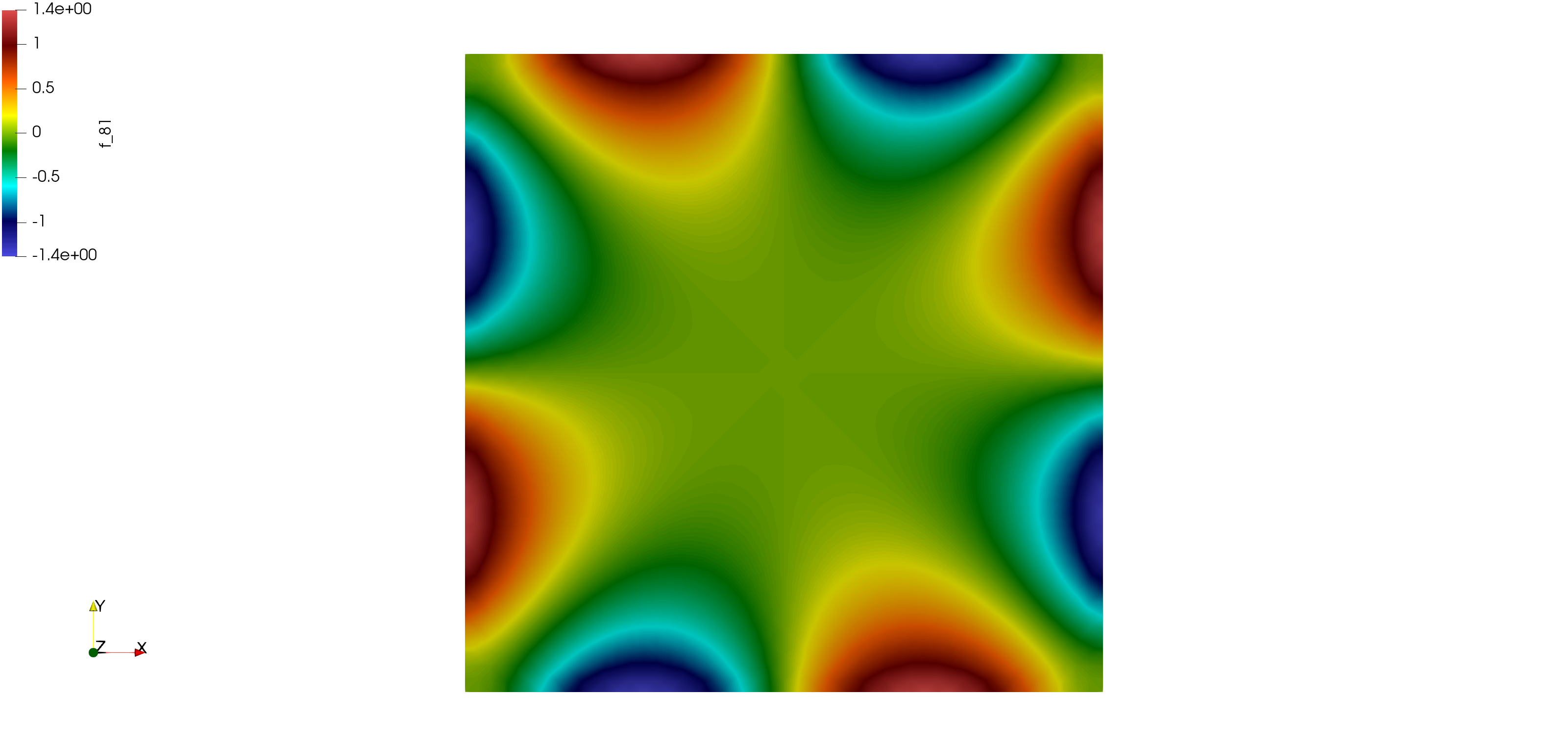}
\end{minipage}
\caption{Test 1. Approximate velocity field $\bu_h$ (left) and postprocessed pressure $p_h$ (right), corresponding to the first eigenvalue in the square domain.}
\label{figura:modos-cuadrado}
\end{figure}
\begin{figure}
\centering\begin{minipage}{0.32\linewidth}
\centering\includegraphics[scale=0.065, trim= 32cm 3cm 32cm 3cm, clip]{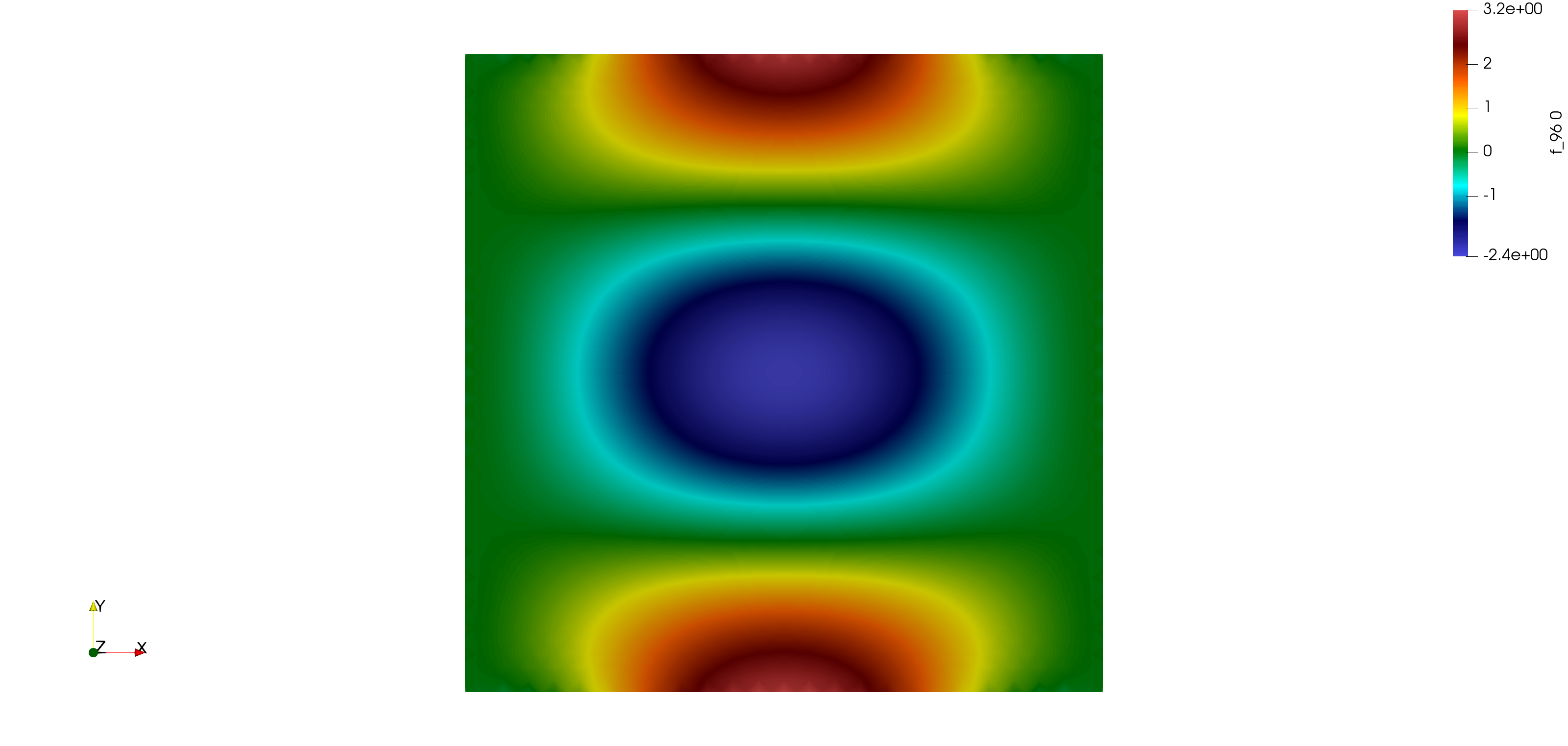}
\end{minipage}
\begin{minipage}{0.32\linewidth}
\centering\includegraphics[scale=0.065, trim= 32cm 3cm 32cm 3cm, clip]{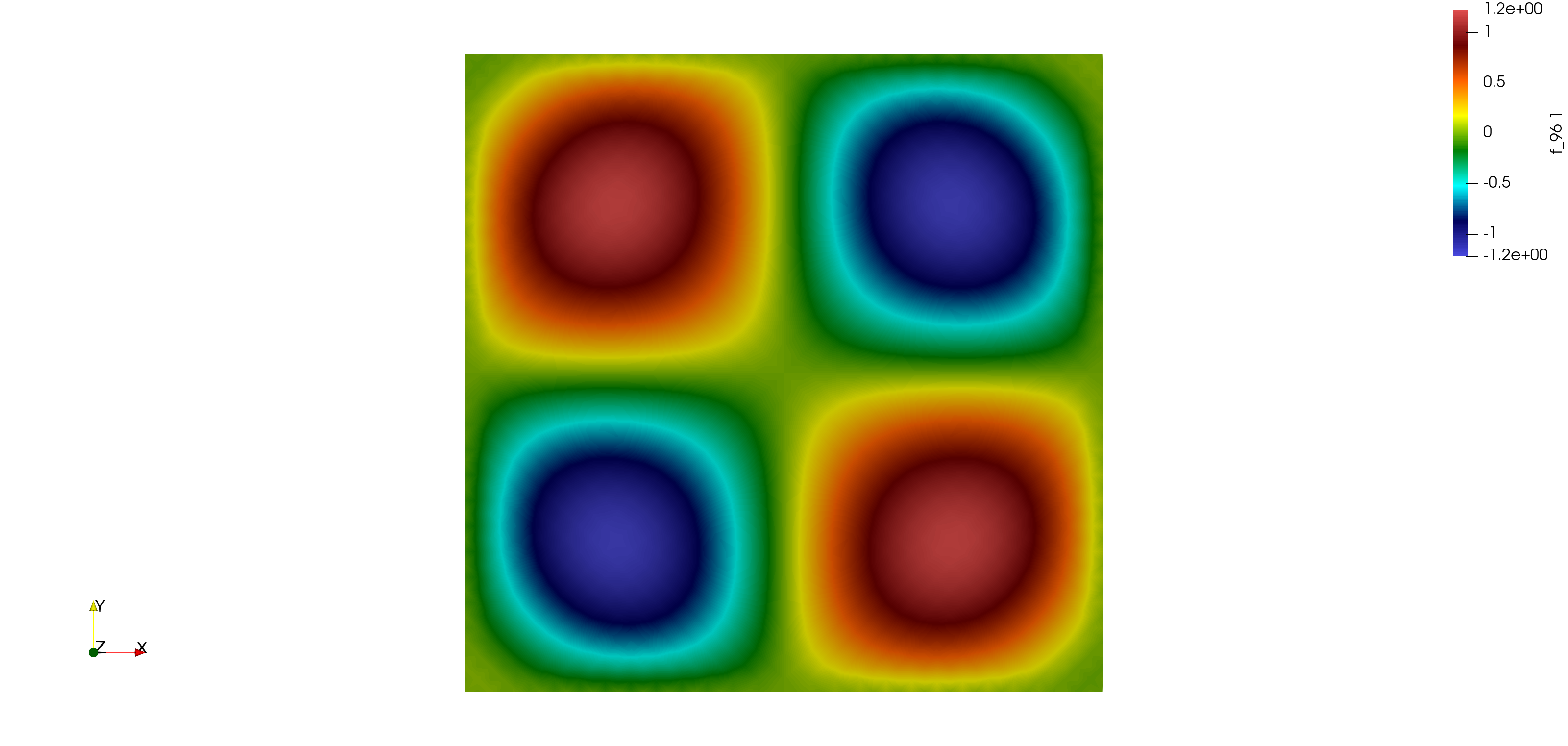}
\end{minipage}
\begin{minipage}{0.32\linewidth}
\centering\includegraphics[scale=0.065, trim= 32cm 3cm 32cm 3cm, clip]{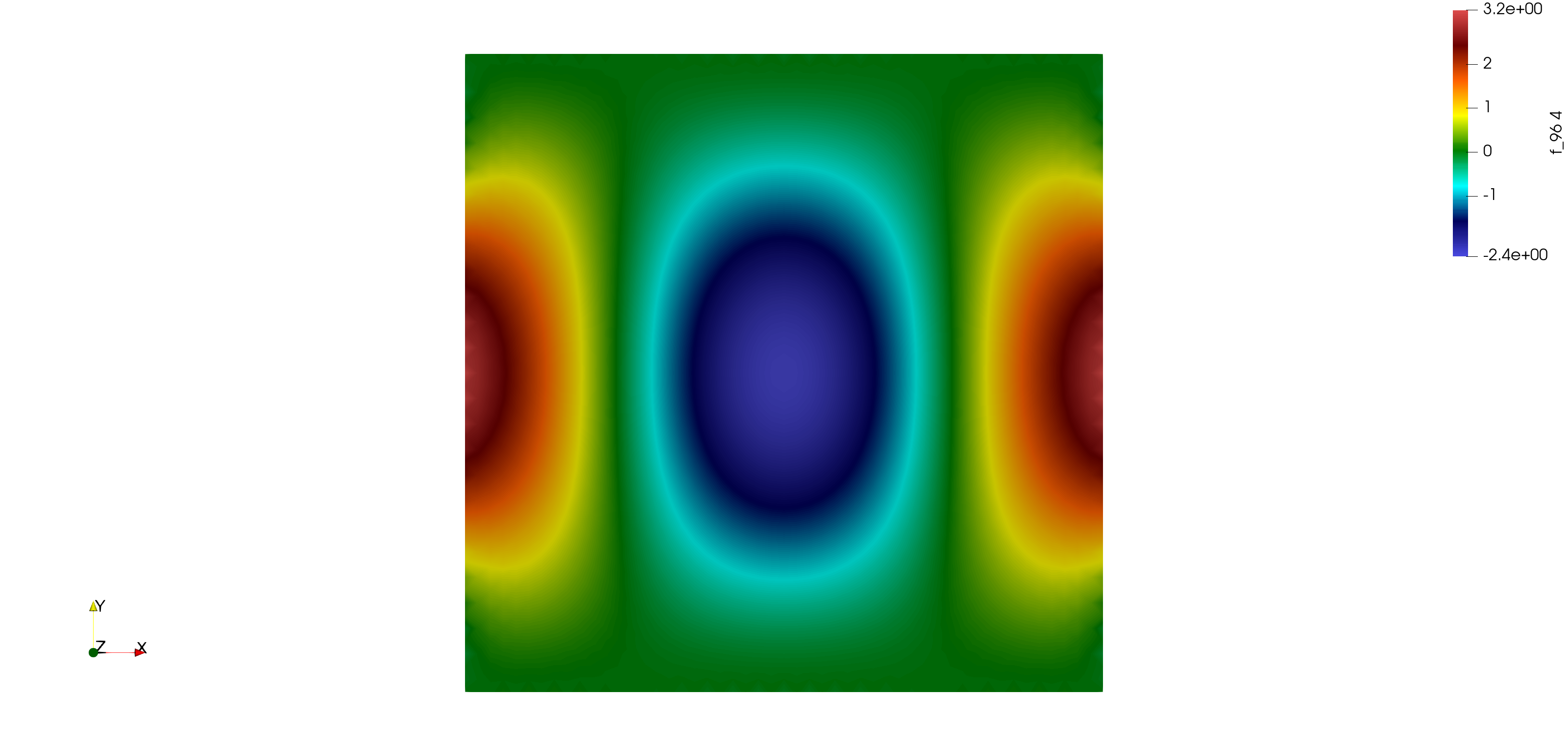}
\end{minipage}
\caption{Test 1. Postprocessed vorticity components  $\underline{\curl}(\bu_h)_{11}$ (left), $\underline{\curl}(\bu_h)_{12}$ (center) and $\underline{\curl}(\bu_h)_{22}$ (right) corresponding to the first eigenvalue in the square domain.}
\label{figura:modos-cuadrado-vorticidad}
\end{figure}
\subsection{Test 2: Non-polygonal domain}
In this experiment we take a curved domain and approximate it by polygonal meshes. This leads to a variational crime, which will affect the order of convergence. The domain for this experiment is the unit circle $\O:=\{(x,y)\in\mathbb{R}^2\,:\, x^2+y^2\leq 1\}$, and in Figure \ref{meshes_circle} we show examples of the meshes we consider to approximate this domain. We recall that $N$ represents the mesh resolution such that the number of elements is asymptotically $6N^2$.

First we present in Table \ref{tabla:circle} the results from approximating the eigenproblem using the $\mathrm{P}_k^{2}\text{-}\mathbb{NED}^{(1)}_{k}$ scheme. It is observed that, for the case $k=0$ we have the desired convergence. However, for $k>0$ we observe that the convergence remains at $\mathcal{O}(h^2)\simeq \mathcal{O}(\text{dof}^{-1})$, showing explicitly the effect of variational crime. This is also reflected in Table \ref{tabla:circle-NED2}, where despite applying the $\mathrm{P}_k^{2}\text{-}\mathbb{NED}^{(2)}_{k}$ scheme, which contains more dofs, the convergence does not improve for $k>0$. However, the results obtained are are in good agreement with those predicted by theory. The results from using the $\mathrm{P}_k^{2}\text{-}\mathbb{NED}^{(2)}_{k+1}$  scheme are described in Table \ref{tabla:circle-NED2}, where similar rates of convergence are observed.  We further explore the results by presenting Figure \ref{figura:modos-circulo} and \ref{figura:modos-circulo-vorticidad}. In Figure \ref{figura:modos-circulo} we can observe the velocity and postprocessed pressure for the fourth normal mode approximation, while the vorticity components calculated by postprocessing are observed in Figure \ref{figura:modos-circulo-vorticidad}.
\begin{figure}
\begin{center}
\begin{minipage}{0.3\linewidth}
\centering\includegraphics[scale=0.11]{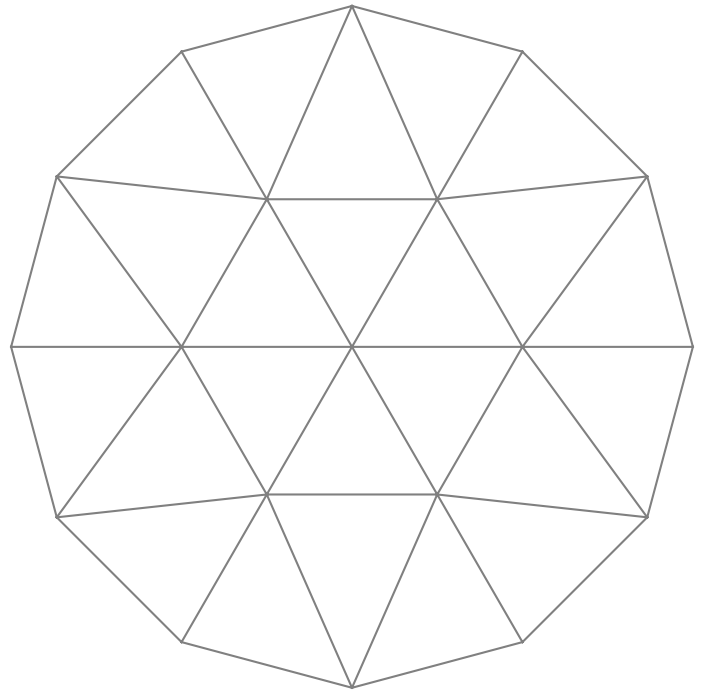}
\end{minipage}
\begin{minipage}{0.3\linewidth}
\centering\includegraphics[scale=0.11]{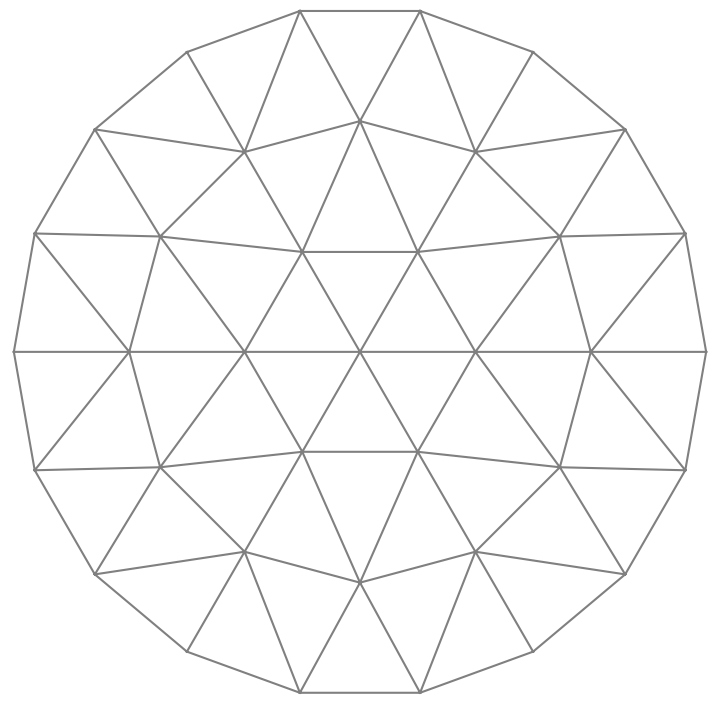}
\end{minipage}
\begin{minipage}{0.3\linewidth}
\centering\includegraphics[scale=0.11]{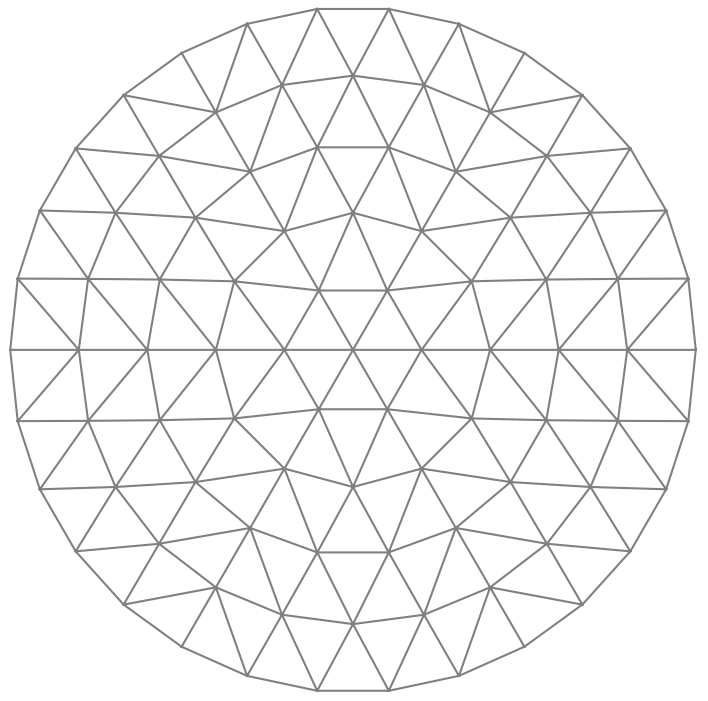}
\end{minipage}
\caption{Test 2. Example of meshes used in the circular domain.}
\label{meshes_circle}
\end{center}
\end{figure} 
\begin{table}
{\footnotesize
\begin{center}
\caption{Test 2. Lowest computed eigenvalues for polynomial degrees $k=0, 1, 2$ using the  $\mathrm{P}_k^{2}\text{-}\mathbb{NED}^{(1)}_k$   scheme.}
\begin{tabular}{c |c c c c |c| c| c }
\toprule
$k $        & $N=20$             &  $N=30$         &   $N=40$         & $N=50$ & Order & $\lambda_{extr}$ &\cite{ MR4077220} \\ 
\midrule
&  14.70187 & 14.69082 & 14.68695&  14.68515   &2.01&  14.68198  &14.68345  \\
&  26.39800 & 26.38501 & 26.38046&  26.37835  &2.01&  26.37463  &26.37840   \\
\multirow{2}{0.15cm}{0}
&  26.39800 & 26.38501 & 26.38046&  26.37835   &2.01&  26.37463  &26.37862   \\
&  40.73553 & 40.71969 & 40.71396&  40.71128  &1.93 & 40.70625   &40.71434    \\
&  40.73553 & 40.71969 & 40.71396&  40.71128  &1.93 & 40.70625   &40.71606   \\

\hline

&  14.68868&  14.68495&  14.68364 & 14.68304&   2.01& 14.68197 &14.68345    \\
&  26.38672&  26.37998&  26.37763 & 26.37654&   2.02& 26.37464 &26.37840     \\
\multirow{2}{0.15cm}{1} 
&  26.38672&  26.37998&  26.37763 & 26.37654&   2.02& 26.37464 &26.37862	\\
&  40.72530&  40.71477&  40.71113 & 40.70944&   2.03& 40.70651 &40.71434	 \\
&  40.72530&  40.71477&  40.71113 & 40.70944&   2.03& 40.70651 &40.71606	 \\

\hline

&  14.68871&  14.68496&  14.68365 & 14.68304 &  2.01 & 14.68361  & 14.68345  \\
&  26.38673&  26.37999&  26.37763 & 26.37654 &  2.01 & 26.37680  &26.37840 	\\
\multirow{2}{0.15cm}{2}   
&  26.38673&  26.37999&  26.37763 & 26.37654 &  2.01 & 26.37680 &26.37862	\\
&  40.72516&  40.71476&  40.71112 & 40.70944 &  2.01 & 40.70647 &40.71434	 \\
&  40.72516&  40.71476&  40.71112 & 40.70944 &  2.01 & 40.70647 &40.71606	 \\

\bottomrule             
\end{tabular}\label{tabla:circle}
\end{center}}

\end{table}
\begin{table}
{\footnotesize
\begin{center}
\caption{Test 2. Lowest computed eigenvalues for polynomial degrees $k$$\,=\,$$0$,$\,1$,$\,2$ using the $\mathrm{P}_k^{2}\text{-}\mathbb{NED}^{(2)}_{k+1}$   scheme. }
\begin{tabular}{c|c c c c |c| c| c }
\toprule
$k$        & $N=20$             &  $N=30$         &   $N=40$         & $N=50$ & Order & $\lambda_{extr}$ &\cite{ MR4077220} \\ 
\midrule
&  14.82469 & 14.71768 & 14.69784&  14.69090   &2.01&  14.68199  &14.68345  \\
&  26.77392 & 26.47427 & 26.41889&  26.39951  &2.02&  26.37450  &26.37840   \\
\multirow{2}{0.15cm}{0}
&  26.77392 & 26.47427 & 26.41889&  26.39951   &2.02&  26.37450  &26.37862   \\
&  41.56881 & 40.92423 & 40.80343&  40.76105  &2.01 & 40.70545   &40.71434    \\
&  41.56881 & 40.92423 & 40.80343&  40.76105  &2.01 & 40.70545   &40.71606   \\

\hline

&  14.68873&  14.68496&  14.68365 & 14.68304&   2.02& 14.68198 &14.68345    \\
&  26.38682&  26.38000&  26.37764 & 26.37655&   2.03& 26.37464 &26.37840     \\
\multirow{2}{0.15cm}{1} 
&  26.38682&  26.38000&  26.37764 & 26.37655&   2.03& 26.37464 &26.37862	\\
&  40.72553&  40.71483&  40.71115 & 40.70945&   2.05& 40.70654 &40.71434	 \\
&  40.72553&  40.71483&  40.71115 & 40.70945&   2.05& 40.70654 &40.71606	 \\

\hline

&  14.68874&  14.68497&  14.68365 & 14.68304 &  2.02 & 14.68198  & 14.68345  \\
&  26.38678&  26.38000&  26.37764 & 26.37655 &  2.02 & 26.37464  &26.37840 	\\
\multirow{2}{0.15cm}{2}   
&  26.38678&  26.38000&  26.37764 & 26.37655 &  2.02 & 26.37464 &26.37862	\\
&  40.72524&  40.71478&  40.71113 & 40.70945 &  2.01 & 40.70646 &40.71434	 \\
&  40.72524&  40.71478&  40.71113 & 40.70945 &  2.01 & 40.70646 &40.71606	 \\

\bottomrule             
\end{tabular}\label{tabla:circle-NED2}
\end{center}}

\end{table}
\begin{figure}
\centering\begin{minipage}{0.45\linewidth}
\centering\includegraphics[scale=0.065, trim= 32cm 3cm 32cm 3cm, clip]{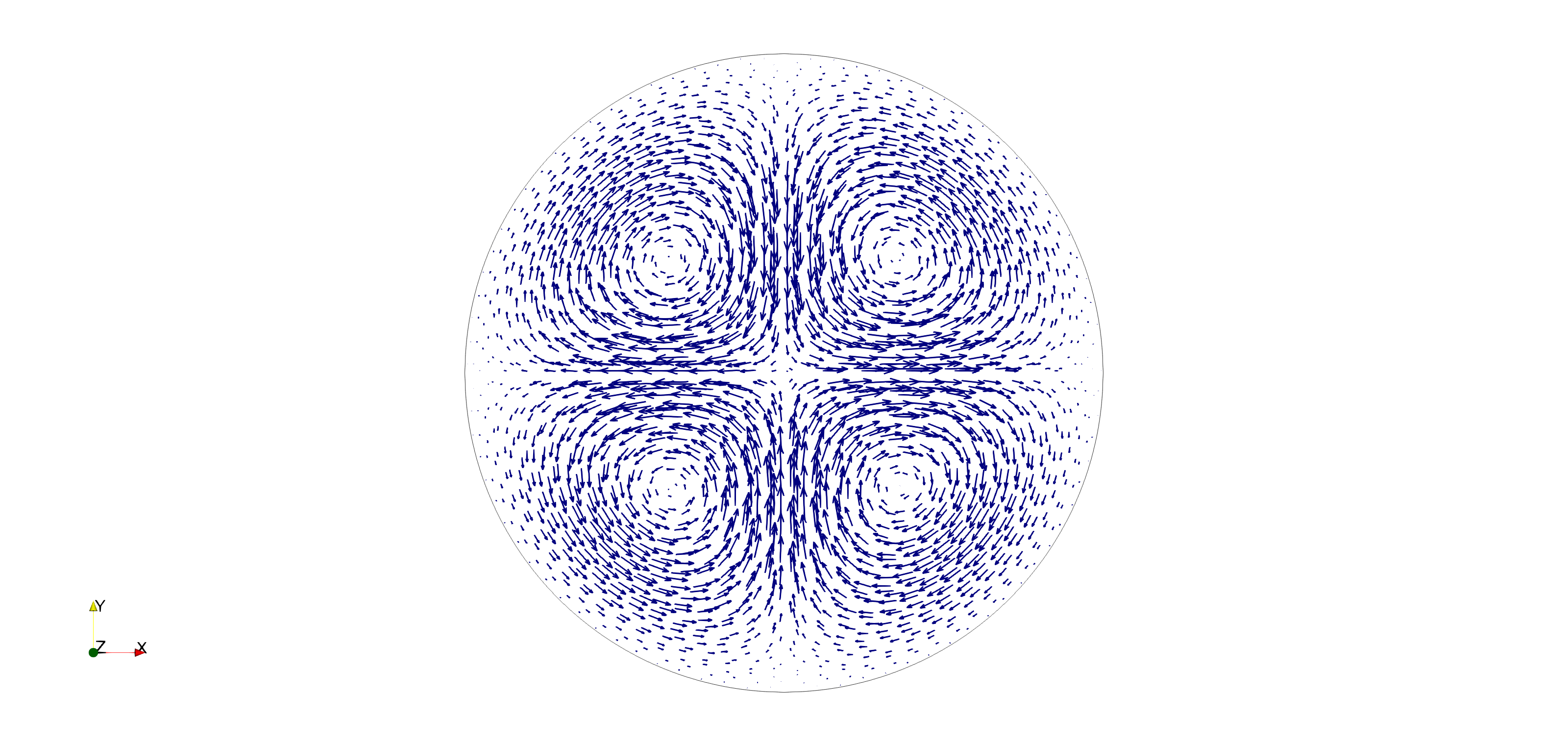}
\end{minipage}
\begin{minipage}{0.45\linewidth}
\centering\includegraphics[scale=0.065, trim= 32cm 3cm 32cm 3cm, clip]{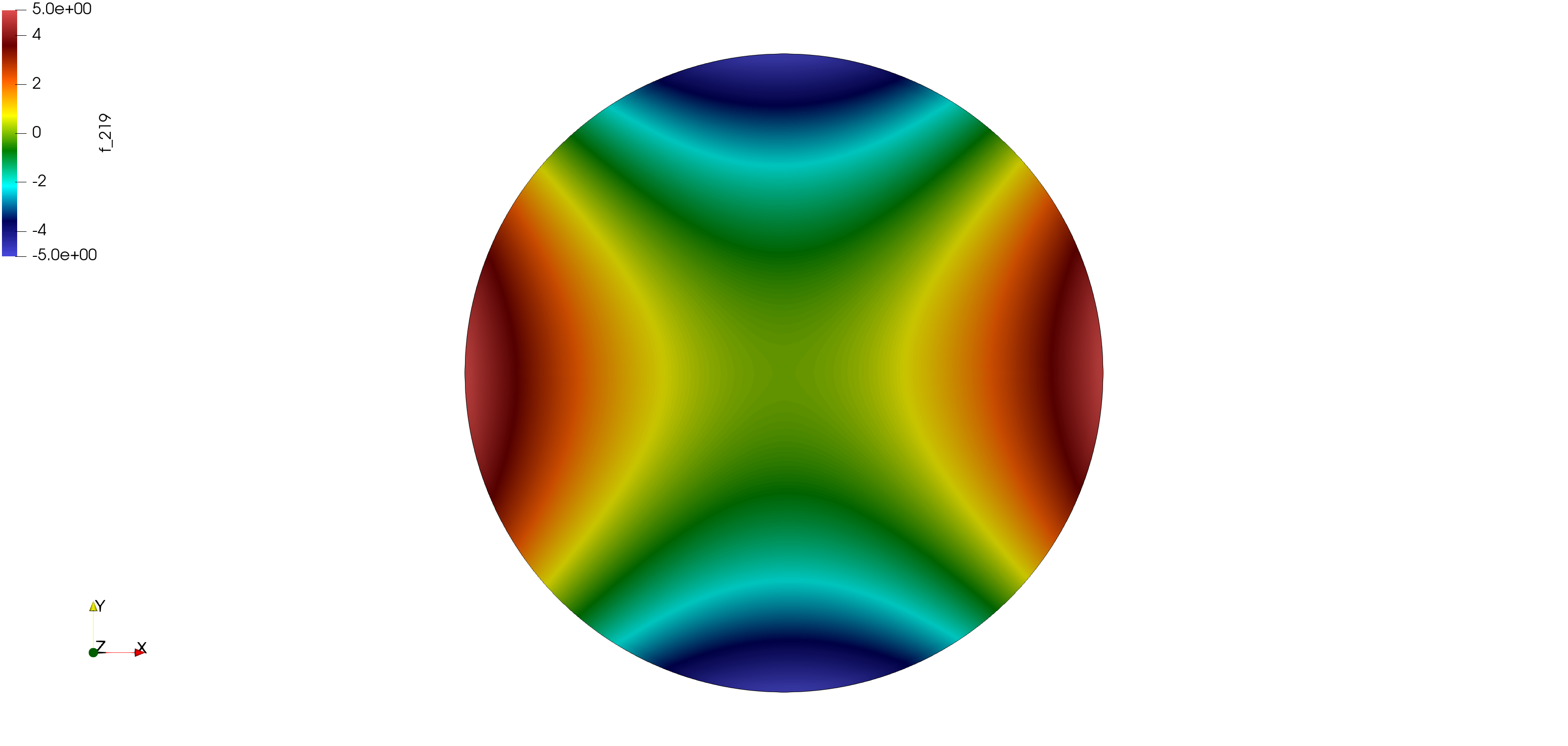}
\end{minipage}
\caption{Test 2. Approximate velocity field $\bu_h$ (left) and postprocessed pressure $p_h$ (right), corresponding to the fourth eigenvalue in the unit circular domain.}
\label{figura:modos-circulo}
\end{figure}
\begin{figure}
\centering\begin{minipage}{0.32\linewidth}
\centering\includegraphics[scale=0.065, trim= 32cm 3cm 32cm 3cm, clip]{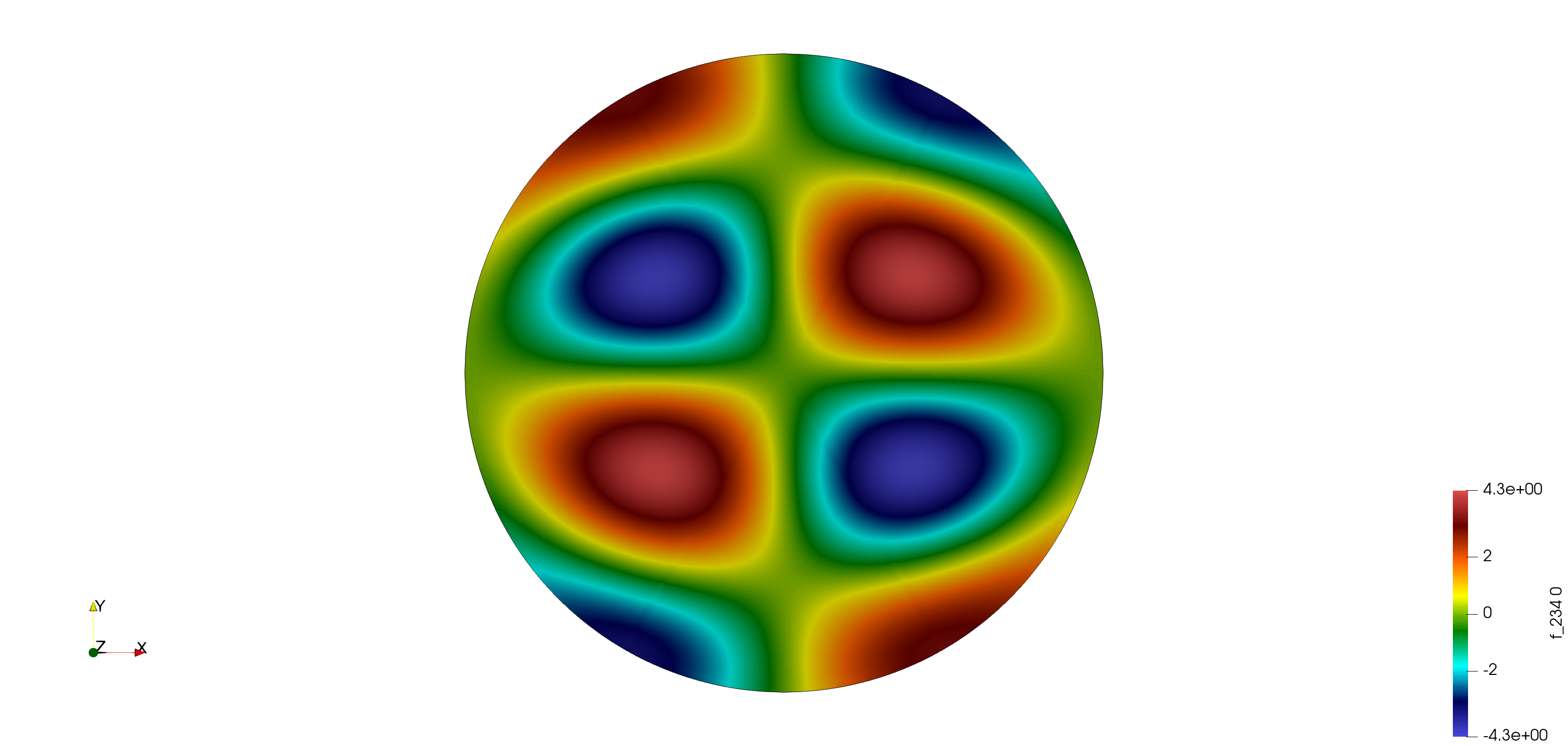}
\end{minipage}
\begin{minipage}{0.32\linewidth}
\centering\includegraphics[scale=0.065, trim= 32cm 3cm 32cm 3cm, clip]{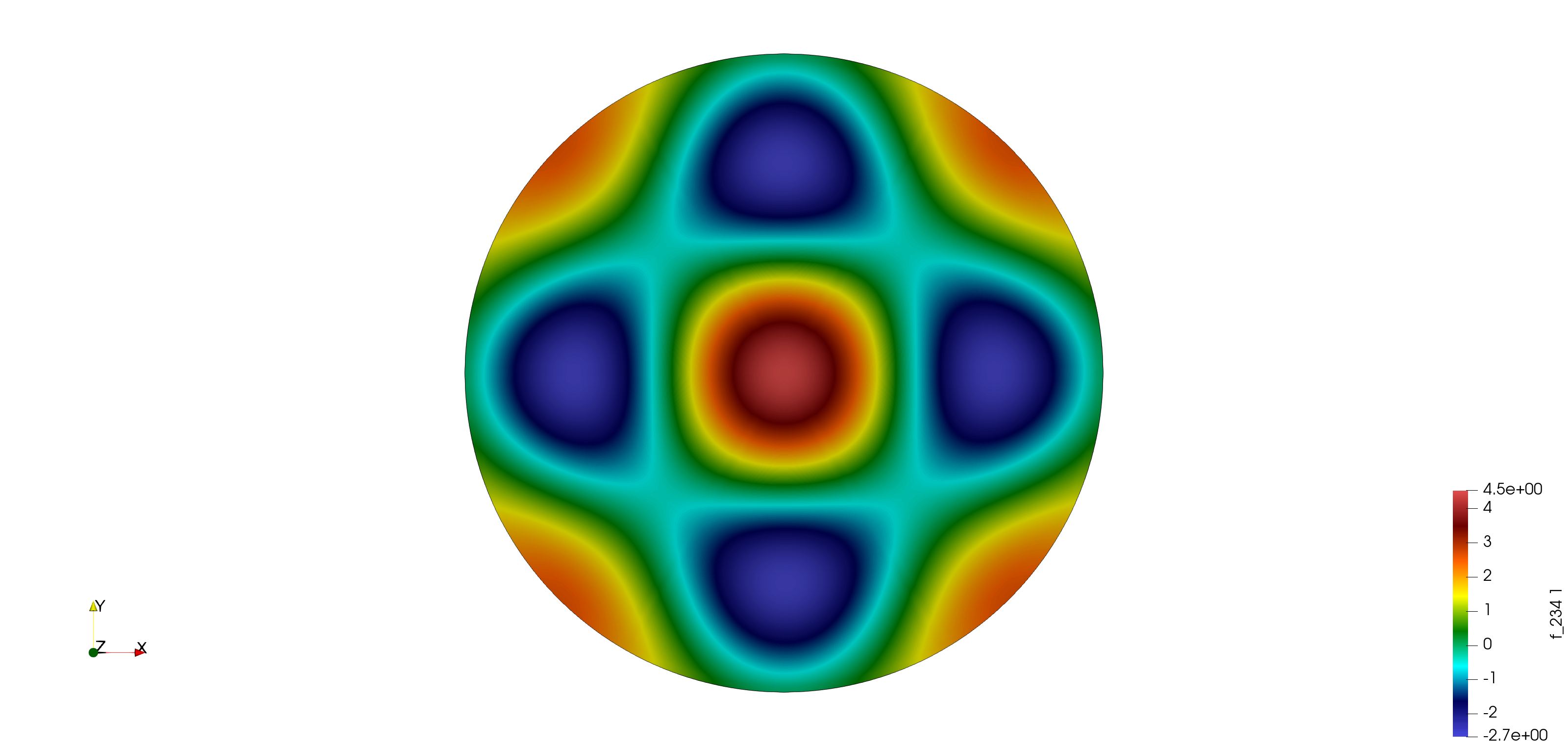}
\end{minipage}
\begin{minipage}{0.32\linewidth}
\centering\includegraphics[scale=0.065, trim= 32cm 3cm 32cm 3cm, clip]{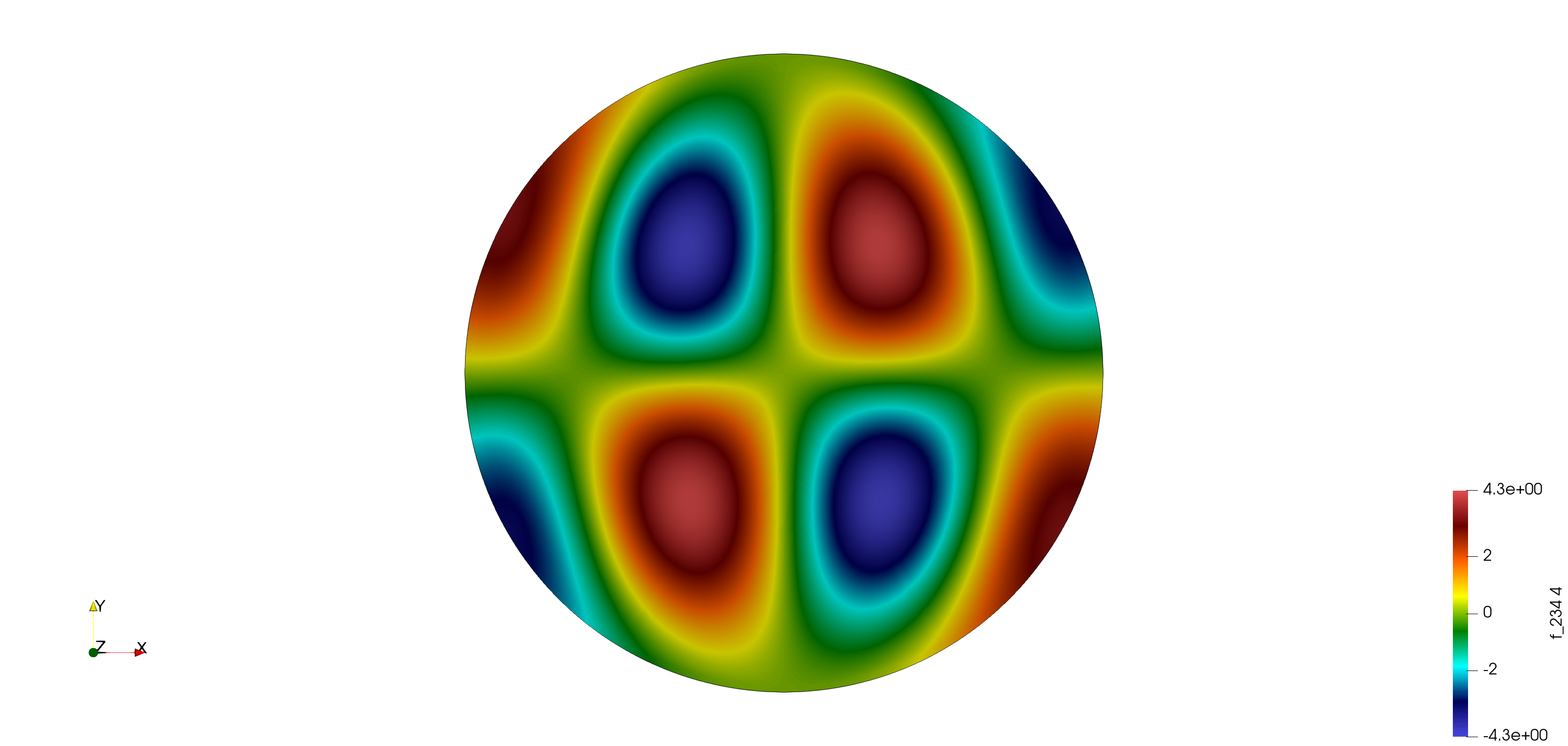}
\end{minipage}
\caption{Test 1. Postprocessed vorticity components  $\underline{\curl}(\bu_h)_{11}$ (left), $\underline{\curl}(\bu_h)_{12}$ (center) and $\underline{\curl}(\bu_h)_{22}$ (right) corresponding to the fourth eigenvalue in the circular domain.}
\label{figura:modos-circulo-vorticidad}
\end{figure}
\subsection{Test 3: Mixed boundary conditions}

The aim of the following test is to explore the performance of  the proposed  method in a more general eigenvalue problem. To do this task, we consider the the boundary $\partial\O$ of our domain is separated into two section by $\partial\O:=\Gamma_D\cup\Gamma_N$, where $\Gamma_D$ and $\Gamma_N$ represents the part of the boundary where we impose Dirichlet and Neumann boundary conditions, respectively. We assume that both $\Gamma_D$ and $\Gamma_N$ have positive measure.  With these definitions at hand,  the problem to consider is the following: Find $\lambda\in \mathbb{R}$, the stress $\bsig$, the velocity $\bu$ and the pressure $p$ such that 
$$
\left\{
\begin{array}{rcll}
\bsig-\mu\underline{\curl}(\bu)-p\mathbb{J}& = & \boldsymbol{0}&  \text{ in } \quad \Omega, \\
\curl(\bsig)& = & -\lambda\bu & \text{ in } \quad \Omega, \\
p&=&\displaystyle-\frac{1}{2}(\bsig:\mathbb{J})&\text{ in } \quad \Omega, \\
\bu & = & \boldsymbol{0} & \,\,\text{on} \quad \Gamma_D,\\
\bsig\boldsymbol{s}&=&\boldsymbol{0}&\,\,\text{on}\quad\Gamma_N,
\end{array}
\right.
$$
where $\boldsymbol{s}$ corresponds to the tangential component of the unitary vector on $\Gamma_N$. In what follows we will consider $\O:=(0,1)^2$ as computational domain. For this square, we assume that the bottom is fixed and the rest 
of its sides are free of stress. 	
\begin{table}
{\footnotesize
\begin{center}
\caption{Test 3. Lowest computed eigenvalues using the $\mathrm{P}_0^{2}\text{-}\mathbb{NED}^{(1)}_0,$ and $\mathrm{P}_0^{2}\text{-}\mathbb{NED}^{(2)}_{1},$  schemes in the square domain with mixed boundary conditions. }
\begin{tabular}{c |c c c c |c| c| c}
	\toprule
	scheme        & $N=20$             &  $N=30$         &   $N=40$         & $N=50$ & Order & $\lambda_{extr}$&\cite{MR3335223}\\ 
	\midrule
	& 2.46736  & 2.46738  &2.46739   &2.46739  &2.14 &2.46740  &2.4674\\
	& 6.24652  & 6.26420  &6.27066   &6.27372  &1.91 &6.27952  &6.2799\\
	\multirow{1}{0.11\linewidth}{$\mathrm{P}_0^{2}\text{-}\mathbb{NED}^{(1)}_0$}
	& 15.16639 & 15.18837 &15.19693  &15.20112 &1.74 &15.21010 &15.2090\\
	& 22.20329 & 22.20521 &22.20584  &22.20612 &2.18 &22.20657 &22.2065\\
	& 26.84469 & 26.91450 &26.92896  &26.93579 &1.92 &26.94869 &26.9479\\
	
	\hline
	
	&2.46824   &2.46777   &2.46761   &2.46753  &2.03 &2.46740  &2.4674\\
	&6.28434   &6.28163   &6.28065   &6.28019  &1.95 &6.27935  &6.2799\\
	\multirow{1}{0.11\linewidth}{$\mathrm{P}_0^{2}\text{-}\mathbb{NED}^{(2)}_1$} 
	&15.23974  &15.22297  &15.21701  &15.21423 &1.98 &15.20917 &15.2090\\
	&22.27513  &22.23705  &22.22373  &22.21756 &2.03 &22.20678 &22.2065\\
	&27.04797  &26.99278  &26.97339  &26.96439 &2.01 &26.94835 &26.9479\\
	\bottomrule             
\end{tabular}\label{tabla:square-suelto}
\end{center}}

\end{table}
We observe from Table \ref{tabla:square-suelto} that the computed eigenvalues are accurately 
recovered with our both numerical schemes, with a clearly quadratic order of convergence. Moreover, our extrapolated values are close to those presented on \cite{MR3335223} for an alternative formulation of the Stokes spectral problem. On the other hand, we present in Figures \ref{figura:modos-cuadrado-suelto} and \ref{figura:modos-cuadrado-suelto-vorticidad} plots of the velocity field, pressure and vorticity components associated to the third eigenfunction of the problem with mixed boundary.
\begin{figure}
\centering\begin{minipage}{0.45\linewidth}
\centering\includegraphics[scale=0.059, trim= 32cm 0cm 30cm 3cm, clip]{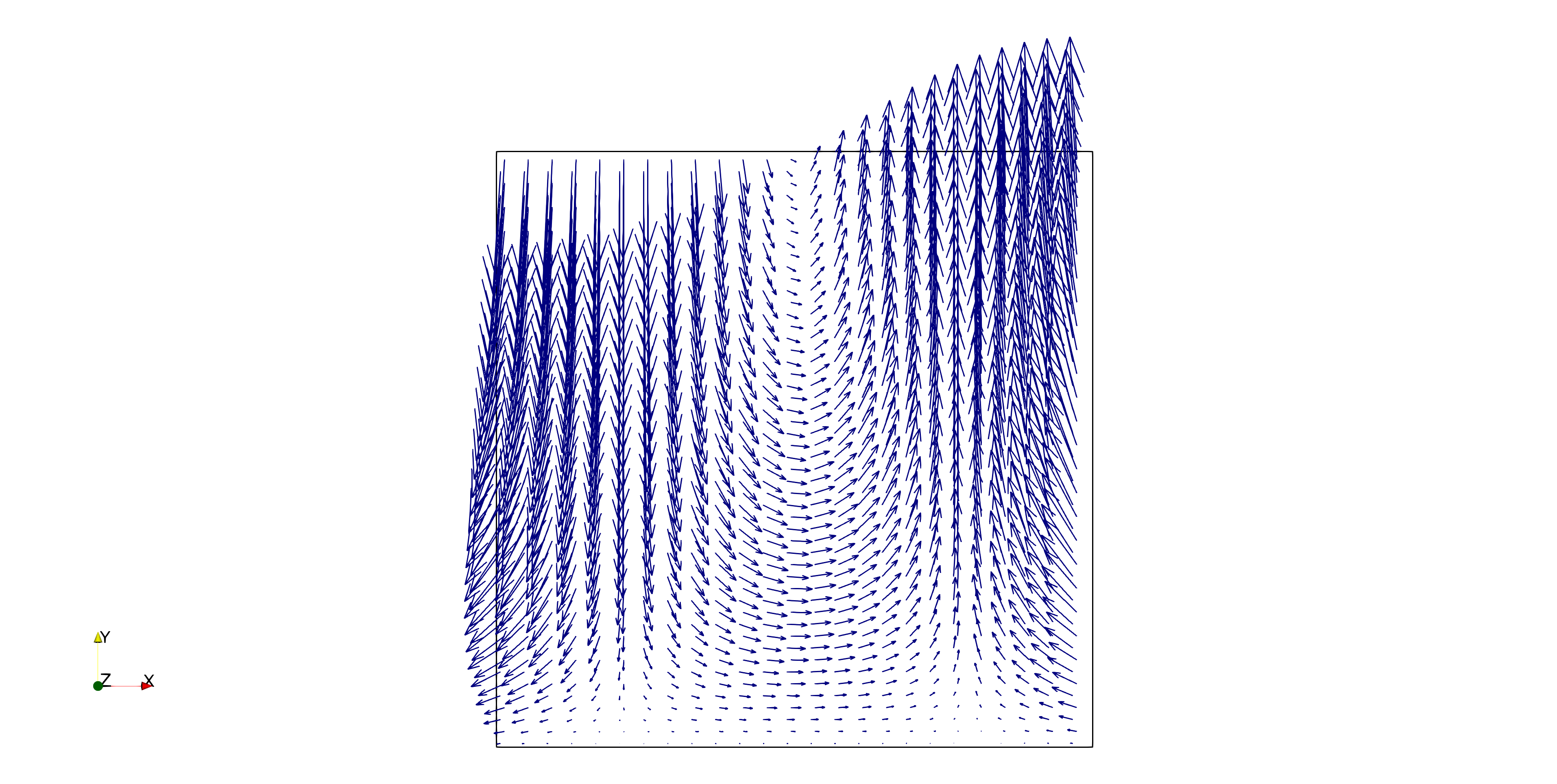}
\end{minipage}
\begin{minipage}{0.45\linewidth}
\centering\includegraphics[scale=0.065, trim= 32cm 3cm 32cm 0cm, clip]{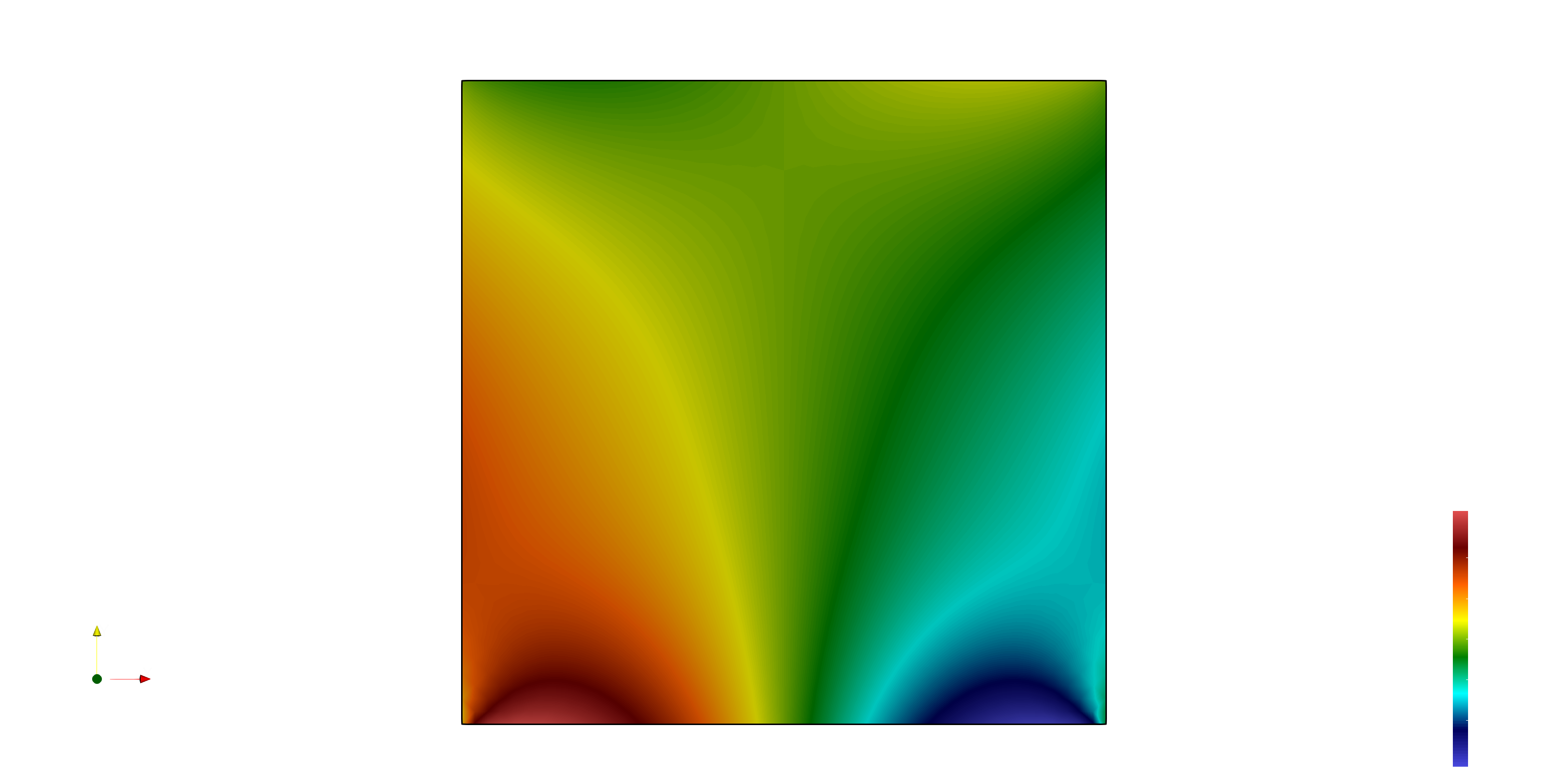}
\end{minipage}
\caption{Test 3. Approximate velocity field $\bu_h$ (left) and postprocessed pressure $p_h$ (right), corresponding to the third eigenvalue in the square domain with mixed boundary conditions.}
\label{figura:modos-cuadrado-suelto}
\end{figure}
\begin{figure}
\centering\begin{minipage}{0.32\linewidth}
\centering\includegraphics[scale=0.065, trim= 32cm 3cm 32cm 3cm, clip]{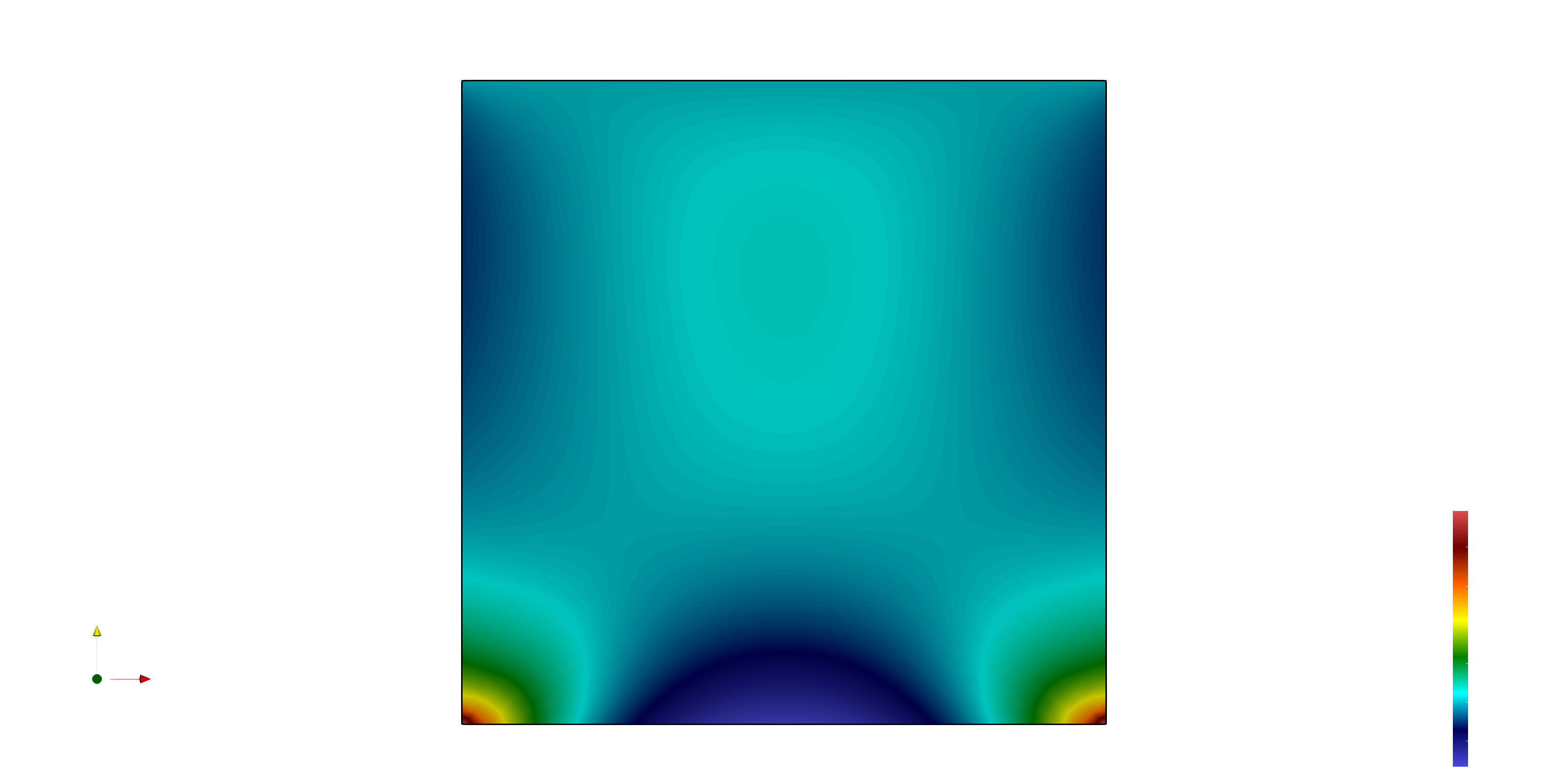}
\end{minipage}
\begin{minipage}{0.32\linewidth}
\centering\includegraphics[scale=0.065, trim= 32cm 3cm 32cm 3cm, clip]{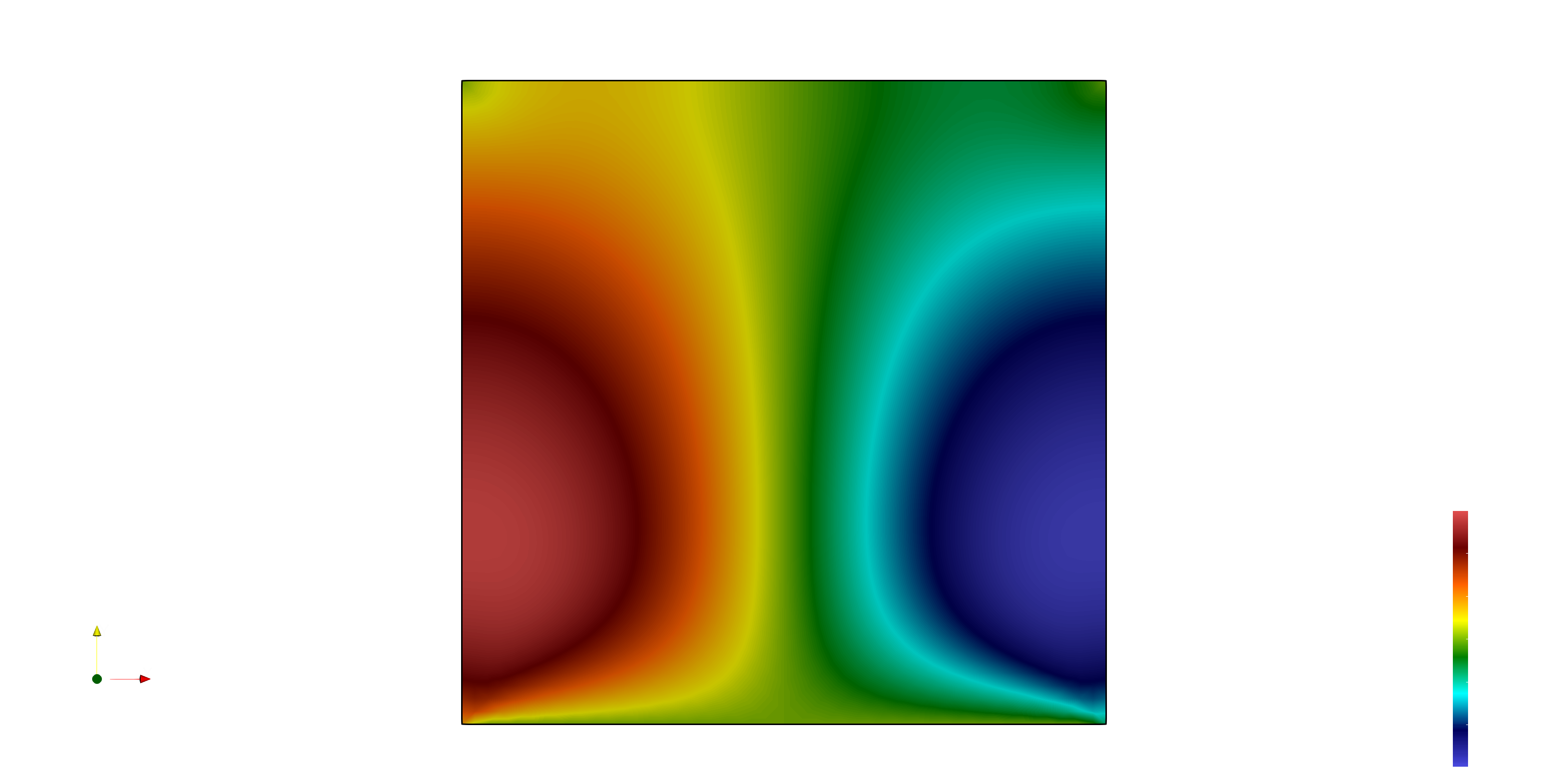}
\end{minipage}
\begin{minipage}{0.32\linewidth}
\centering\includegraphics[scale=0.065, trim= 32cm 3cm 32cm 3cm, clip]{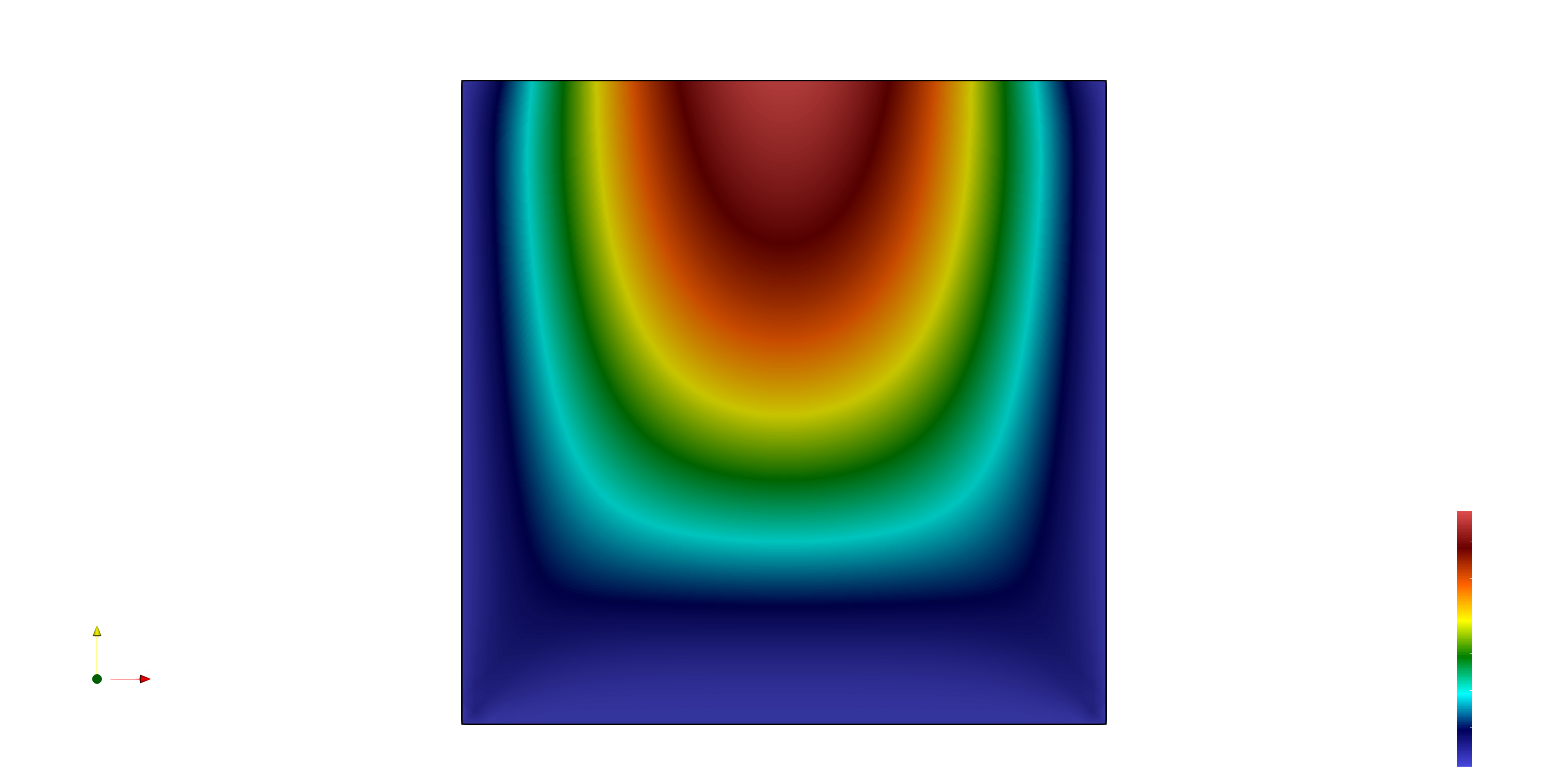}
\end{minipage}
\caption{Test 3. Postprocessed vorticity components  $\underline{\curl}(\bu_h)_{11}$ (left), $\underline{\curl}(\bu_h)_{12}$ (center) and $\underline{\curl}(\bu_h)_{22}$ (right) corresponding to the third eigenvalue in the square domain with mixed boundary conditions.}
\label{figura:modos-cuadrado-suelto-vorticidad}
\end{figure}
\subsection{Test 4. A posteriori test on a non-convex domain}
We end our numerical test section with results for the proposed a posteriori estimator. To do this task, we focus 
on simple eigenvalues of the spectrum of $\bT$. The computational domain for this test is 
$\Omega:=(-1,1)\times(-1,1) \backslash \big((-1,0)\times (-1,0)\big)$ and the only boundary condition is  $\bu=\boldsymbol{0}$. Since the reentrant angle of this domain 
leads to a lack of regularity for some eigenfunctions associated to $\bT$,  our goal is to recover the optimal order of convergence with the proposed estimator. The initial mesh for this test is depicted in Figure \ref{fig:lshapeinitialmesh}. 

It is well known that the regularity of the eigenfunctions in this geometry satisfy $2r\geq1.08$, so that under uniform refinements, suboptimal error rates are expected since $s\approx2\min\{r,k+1\}$  (see, for instance \cite{MR2473688,LRVSISC}).  The extrapolated value for this experiment have been obtained through sufficiently fine meshing and least squares fitting. We choose $\lambda_1=32.13183$ as an exact solution, which is in good agreement with the references above. 

The adaptively refinement procedure is based on the blue-green marking strategy, consisting of refining the triangle $T$ that satisfy
$$
\eta_{T}\geq 0.5\max_{T'\in\CT_h}\eta_{T'}.
$$

In Table \ref{tabla-l-shape-ned1} we observe the behavior of the estimator  $\eta$ defined in \eqref{eq:global_est_reduced} when the families $\mathbb{NED}_0^{(1)}$ and $\mathbb{NED}_1^{(2)}$ are used, with 15 iterations of the adaptive refinement. Note that $\vert \lambda_1-\lambda_{1h}\vert \approx C \mathrm{dof}^{-1.04}\approx Ch^{2.08}$. We also note that the additional degrees of freedom of the $\mathrm{P}_0^2-\mathbb{NED}_1^{(2)}$ scheme allow the method to be more efficient in the sense that, the elements marked for refinement are fewer than those when using $\mathrm{P}_0^2-\mathbb{NED}_0^{(1)}$. This is also observed in the intermediate meshes used in the adaptive algorithm shown in Figure \ref{fig:lshape-mesh-refinamientos}. The column corresponding to the effectivity $\vert\lambda_1-\lambda_{1h}\vert/\eta^2$  shows that our estimator remains properly bounded above and below, away from zero. 

A graphical description of these results can be seen in Figure \ref{fig:lshape-error}, where we can observe the errors and the values of the estimator for each method.  It is observed that the errors behave similar to $\eta^2$, i.e., they decay as $\mathcal{O}(h^2)$, so the efficiency and reliability are verified. Moreover, the plot includes a
line with slope $-1.0$, which corresponds to the optimal order of convergence for the proposed schemes. The slopes of the lines obtained by a least squares fitting of the values computed with the adaptive scheme are $-1.04$.
\begin{table}
{\footnotesize
\begin{center}
\caption{Test 4. Computed eigenfunction $\lambda_{1h}$, error and effectivity indexes using the $\mathrm{P}_0^{2}\text{-}\mathbb{NED}^{(1)}_0$ and $\mathrm{P}_0^{2}\text{-}\mathbb{NED}^{(2)}_1$ schemes with adaptively refinements. }
\begin{tabular}{c|c  c |c c c}
	\toprule
	scheme&$\text{dof}$&$\lambda_{1h}$&$\vert\lambda_1-\lambda_{1h}\vert$&$\eta^2$&$\vert\lambda_1-\lambda_{1h}\vert/\eta^2$\\\hline
	\multirow{15}{0.11\linewidth}{$\mathrm{P}_0^{2}\text{-}\mathbb{NED}^{(1)}_0$}
	&1181      &30.19673  &1.93509e+00 &2.50851e+01  &7.71413e-02\\
	&1399      &31.01101  &1.12082e+00 &2.06562e+01  &5.42608e-02\\
	&1975      &31.33653  &7.95304e-01 &1.48616e+01  &5.35140e-02\\
	&2919      &31.60707  &5.24757e-01 &1.01735e+01  &5.15805e-02\\
	&4471      &31.75965  &3.72184e-01 &7.04846e+00  &5.28036e-02\\
	&6561      &31.88079  &2.51042e-01 &4.92335e+00  &5.09900e-02\\
	&10023     &31.98109  &1.50742e-01 &3.31763e+00  &4.54366e-02\\
	&14067     &32.00944  &1.22386e-01 &2.38015e+00  &5.14196e-02\\
	&21599     &32.05984  &7.19859e-02 &1.59321e+00  &4.51830e-02\\
	&31619     &32.08211  &4.97245e-02 &1.09533e+00  &4.53967e-02\\
	&45401     &32.09712  &3.47087e-02 &7.61922e-01  &4.55541e-02\\
	&66797     &32.10917  &2.26548e-02 &5.21796e-01  &4.34171e-02\\
	&97183     &32.11631  &1.55191e-02 &3.59593e-01  &4.31573e-02\\
	&143721    &32.12159  &1.02378e-02 &2.44116e-01  &4.19381e-02\\
	&204461    &32.12516  &6.66500e-03 &1.70746e-01  &3.90346e-02\\
	\midrule
	&Order   &$\mathcal{O}(\text{dof}^{-1.04})$ &  &  &\\
	&$\lambda_1$   &32.13183   &  &&\\
	\midrule  
	\multirow{15}{0.11\linewidth}{$\mathrm{P}_0^{2}\text{-}\mathbb{NED}^{(2)}_1$}
	&1907      &33.05942  &9.27595e-01 &1.44740e+01  &6.40871e-02\\
	&2103      &33.23013  &1.09830e+00 &8.41858e+00  &1.30462e-01\\
	&2347      &33.29788  &1.16605e+00 &5.44523e+00  &2.14142e-01\\
	&2575      &33.34447  &1.21264e+00 &4.17494e+00  &2.90457e-01\\
	&2893      &33.31662  &1.18480e+00 &3.29619e+00  &3.59443e-01\\
	&3609      &32.96793  &8.36099e-01 &2.16033e+00  &3.87023e-01\\
	&4677      &32.66458  &5.32752e-01 &1.36450e+00  &3.90438e-01\\
	&5559      &32.58139  &4.49556e-01 &1.06559e+00  &4.21885e-01\\
	&8325      &32.41385  &2.82018e-01 &6.42697e-01  &4.38804e-01\\
	&11651     &32.34513  &2.13299e-01 &4.04288e-01  &5.27590e-01\\
	&16179     &32.27964  &1.47810e-01 &2.59997e-01  &5.68505e-01\\
	&21025     &32.24409  &1.12260e-01 &1.86314e-01  &6.02532e-01\\
	&31091     &32.20603  &7.42023e-02 &1.14587e-01  &6.47564e-01\\
	&40873     &32.18761  &5.57772e-02 &8.11620e-02  &6.87233e-01\\
	&57171     &32.17095  &3.91252e-02 &5.45027e-02  &7.17859e-01\\
	\midrule
	&Order   &$\mathcal{O}(\text{dof}^{-1.04})$ &  &  &\\
	&$\lambda_1$   &32.13183   &  &&\\  
	\bottomrule         
\end{tabular}\label{tabla-l-shape-ned1}
\end{center}}

\end{table}
\begin{figure}
	\centering
	\includegraphics[scale=0.062,trim= 16cm 0 16cm 0, clip]{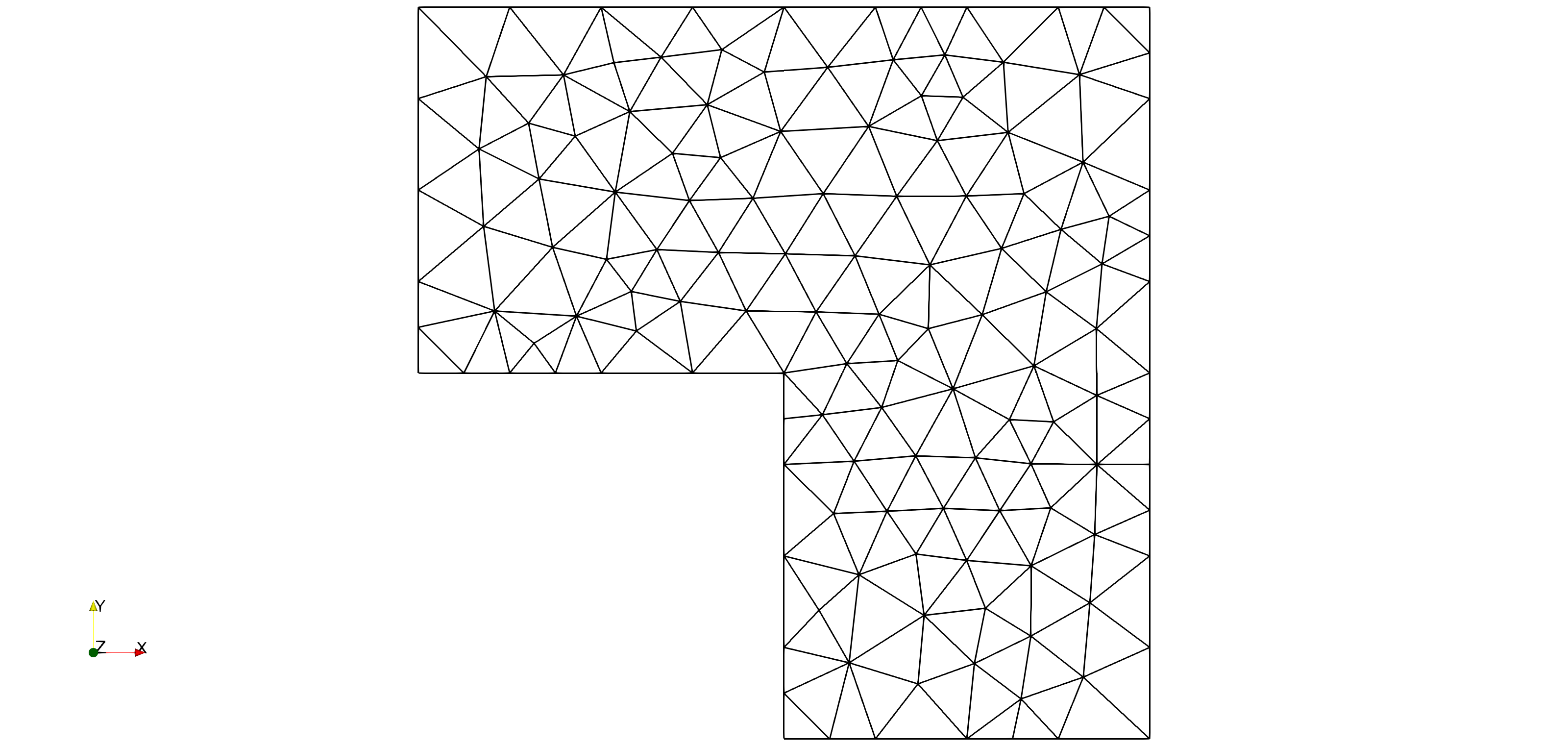}
	\caption{Test 4. Initial mesh on the L-shaped domain.}
	\label{fig:lshapeinitialmesh}
\end{figure}
\begin{figure}
	\centering
	\begin{minipage}{\linewidth}\centering
		\includegraphics[scale=0.39,trim=0cm 0.2cm 0cm 0.3cm,clip]{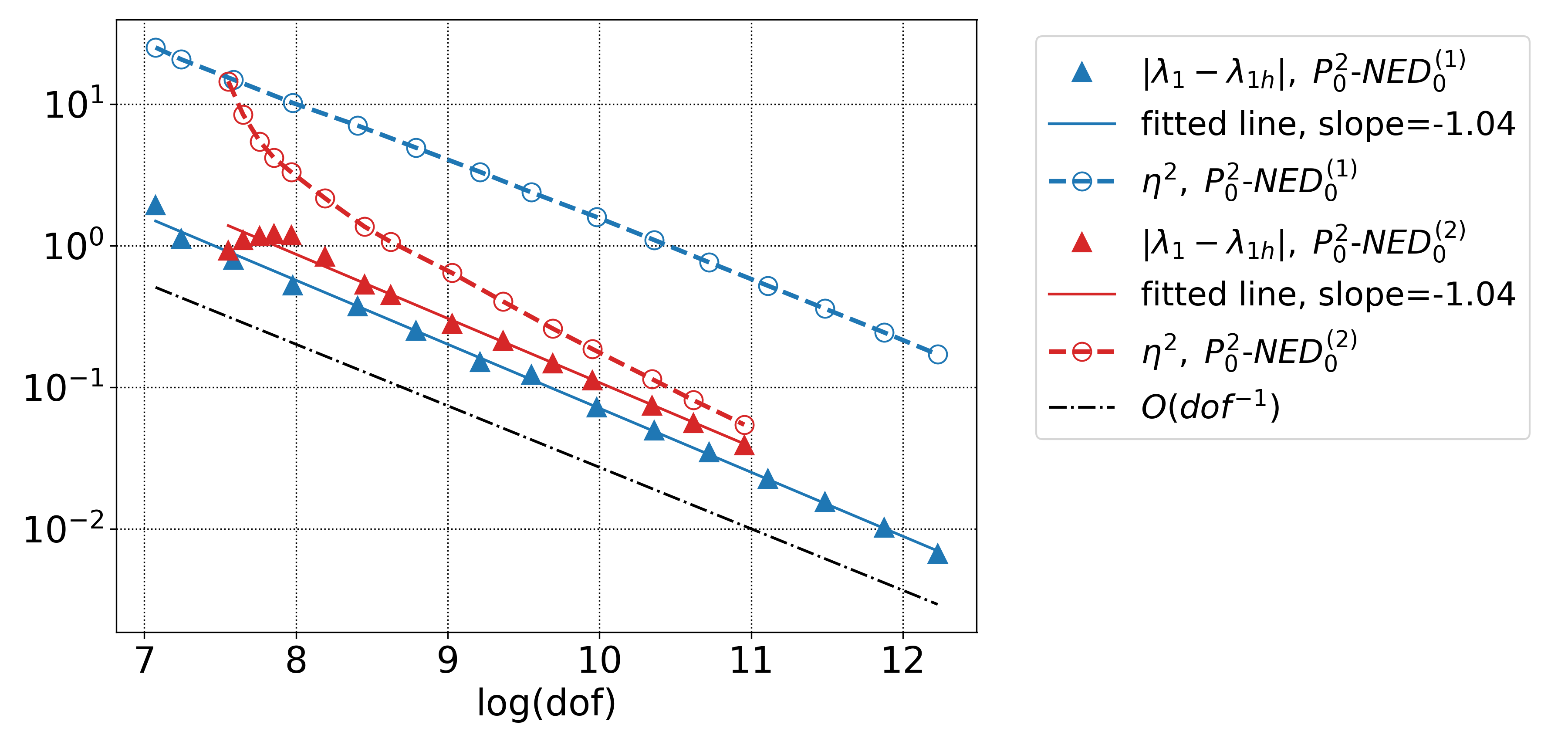}
	\end{minipage}
	\caption{Test 4. Comparison between error, estimators and fit lines in the adaptive refinenment using the lowest order $\mathbb{NED}_0^{(1)}$ and $\mathbb{NED}_1^{(2)}$ families. }
	\label{fig:lshape-error}
\end{figure}
\begin{figure}
	\centering
	\begin{minipage}{0.32\linewidth}
		\includegraphics[scale=0.0608,trim= 28cm 0cm 27cm 0cm, clip]{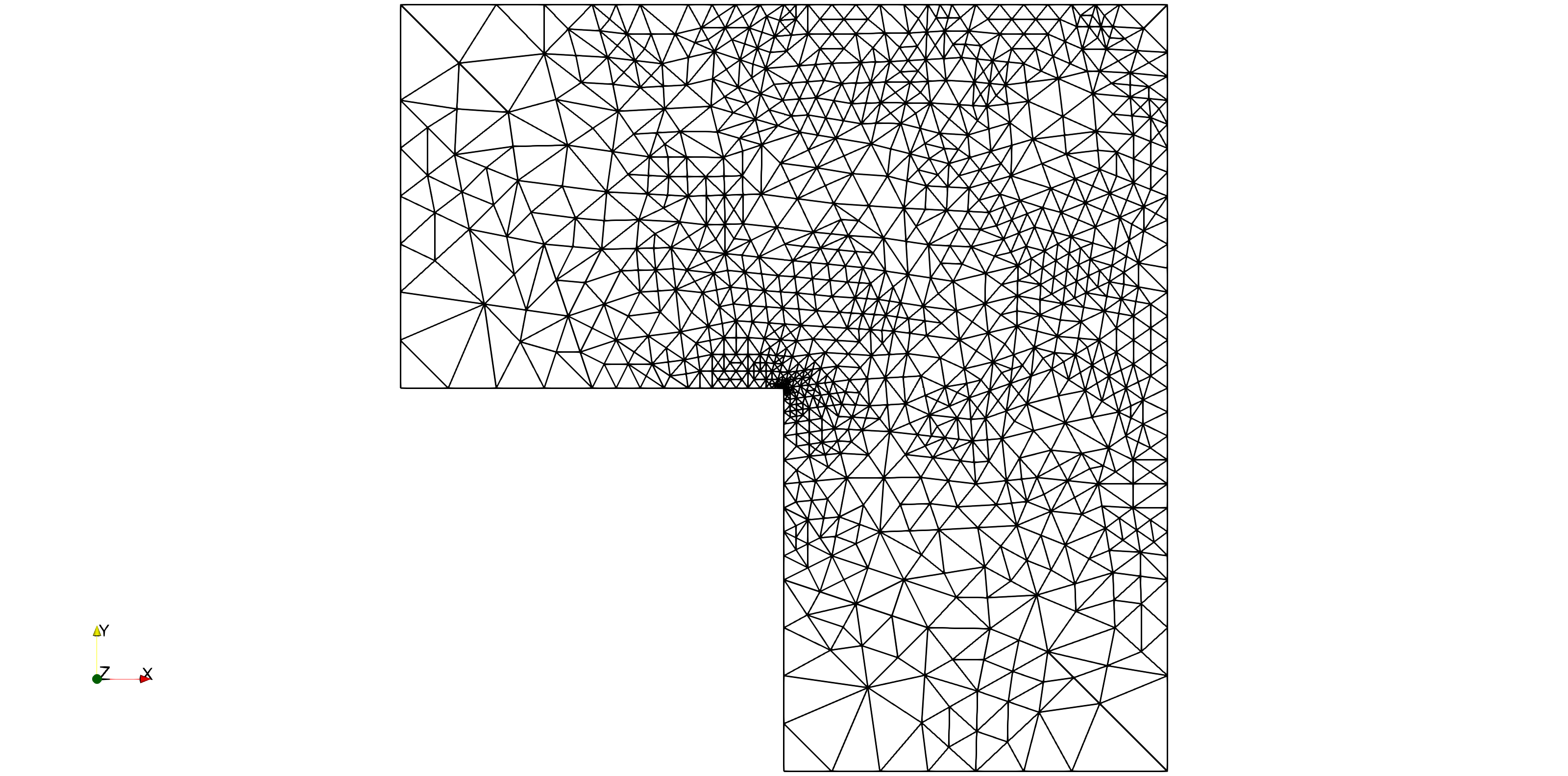}
	\end{minipage}
	\begin{minipage}{0.32\linewidth}
		\includegraphics[scale=0.0608,trim= 28cm 0cm 27cm 0cm,, clip]{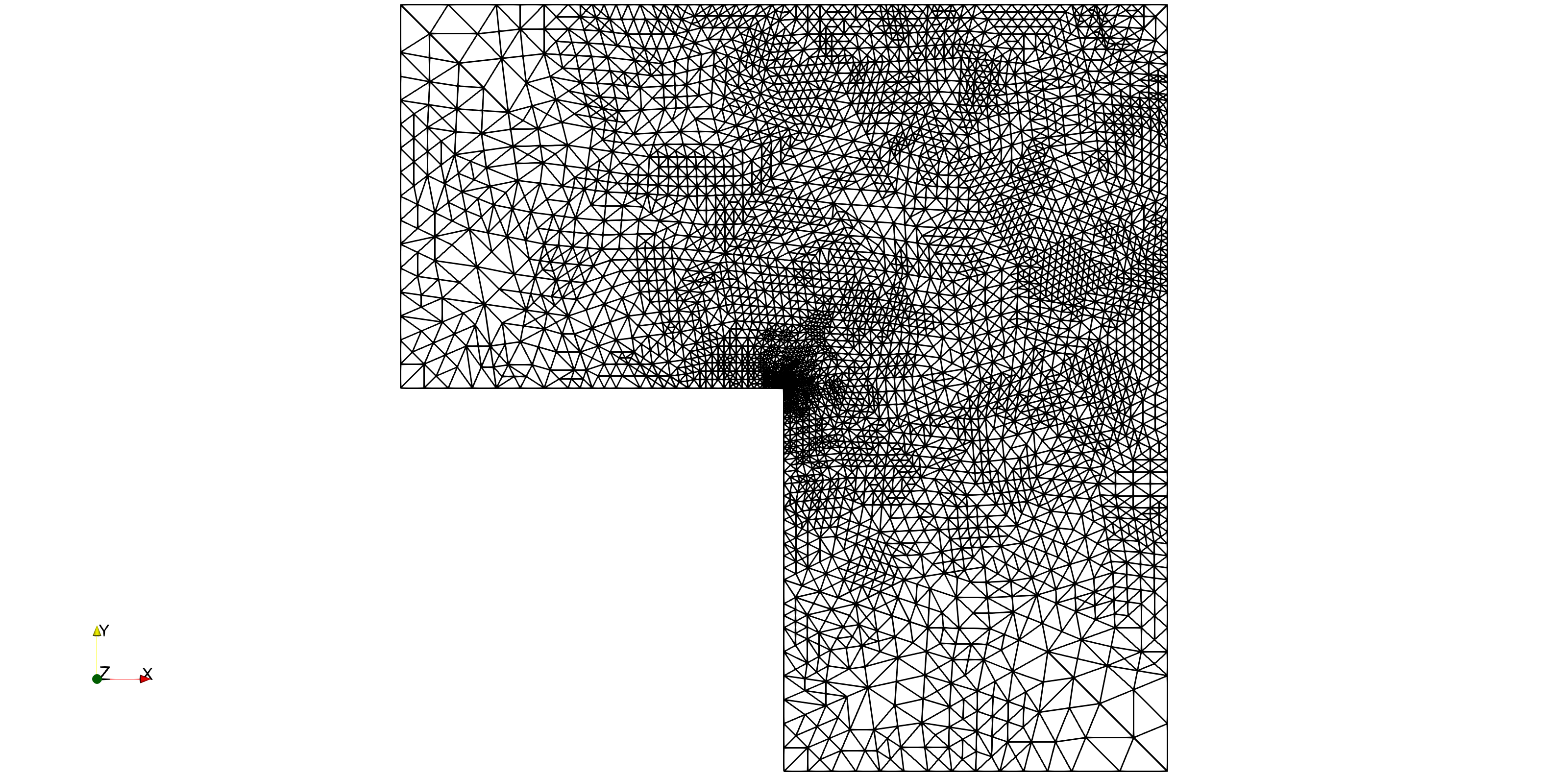}
	\end{minipage}
	\begin{minipage}{0.32\linewidth}
		\includegraphics[scale=0.0608,trim= 28cm 0cm 27cm 0cm,, clip]{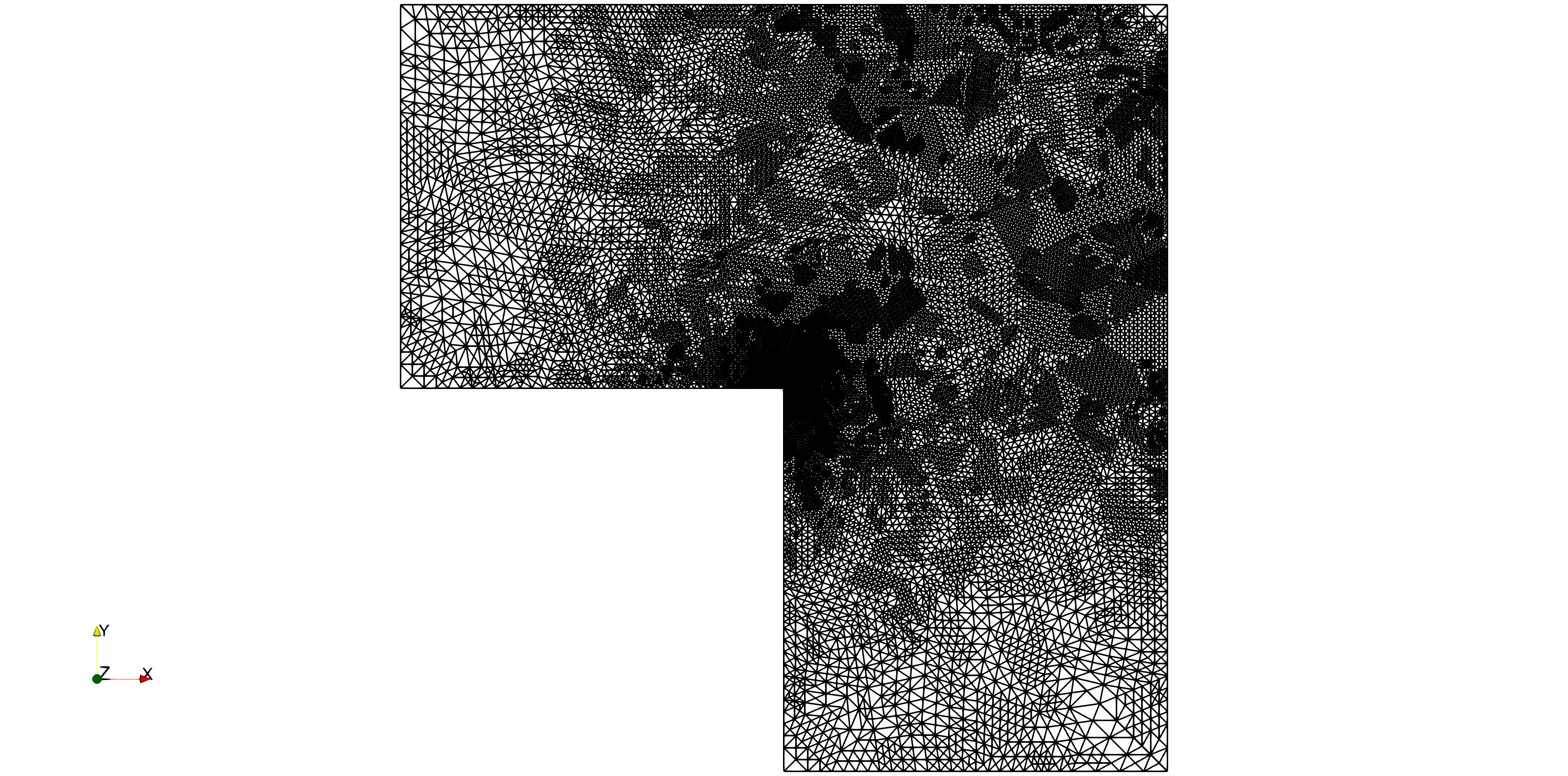}
	\end{minipage}\\\hspace*{0.01cm}
	\begin{minipage}{0.32\linewidth}
		\includegraphics[scale=0.06,trim= 28cm 0cm 27cm 0cm,, clip]{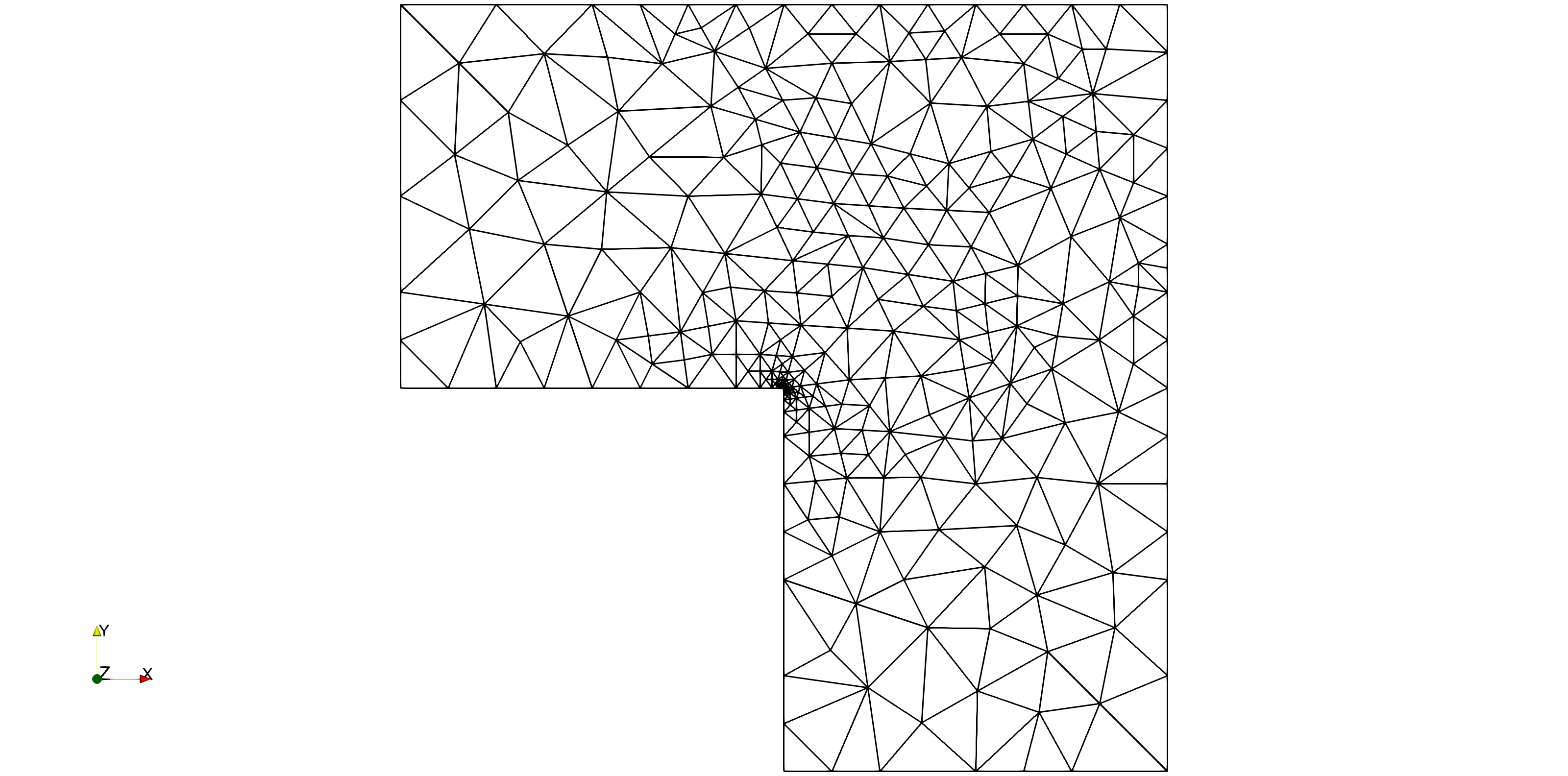}
	\end{minipage}
	\begin{minipage}{0.32\linewidth}
		\includegraphics[scale=0.06,trim= 28cm 0cm 27cm 0cm,, clip]{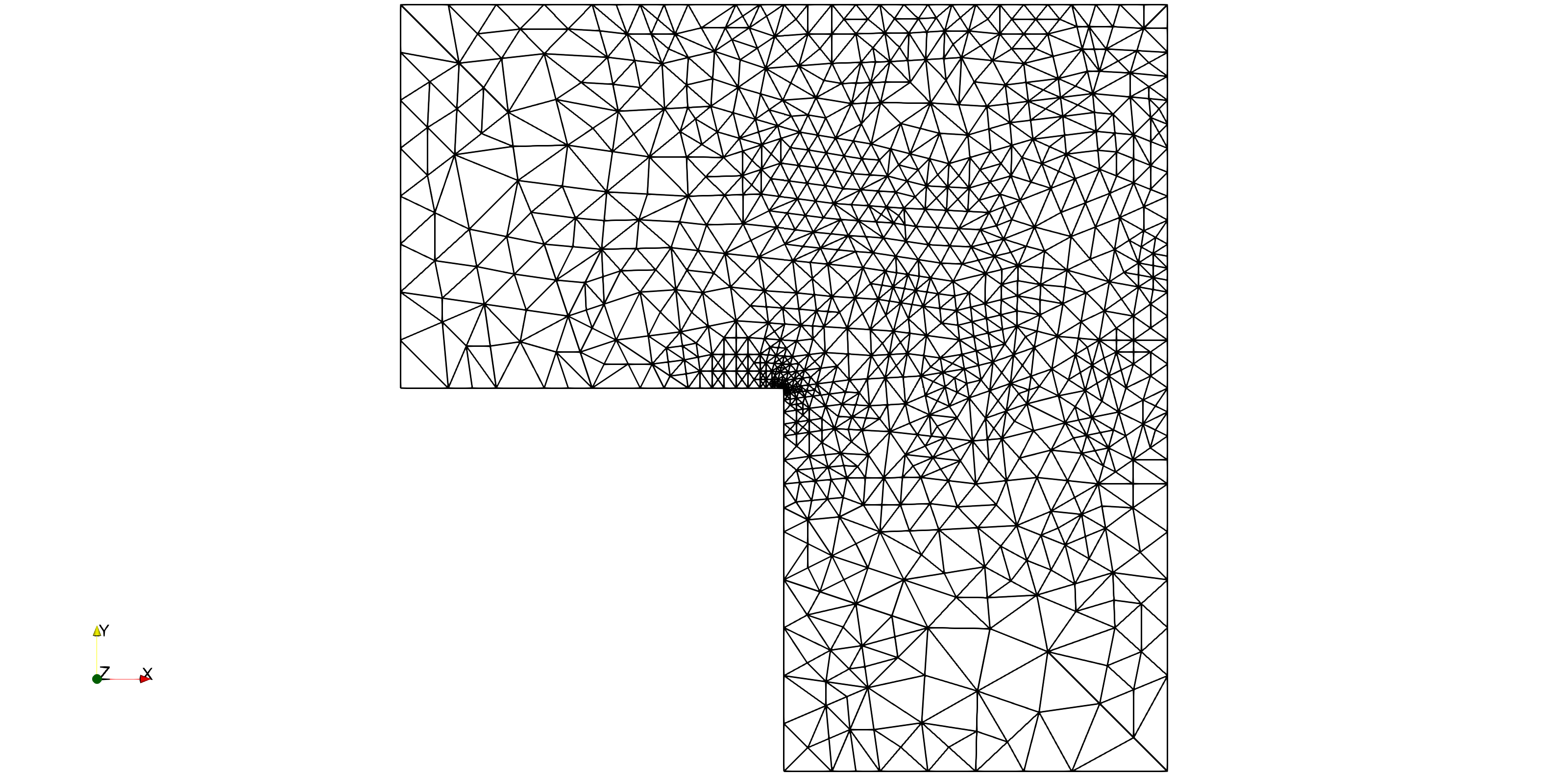}
	\end{minipage}
	\begin{minipage}{0.32\linewidth}
		\includegraphics[scale=0.06,trim= 28cm 0cm 27cm 0cm,, clip]{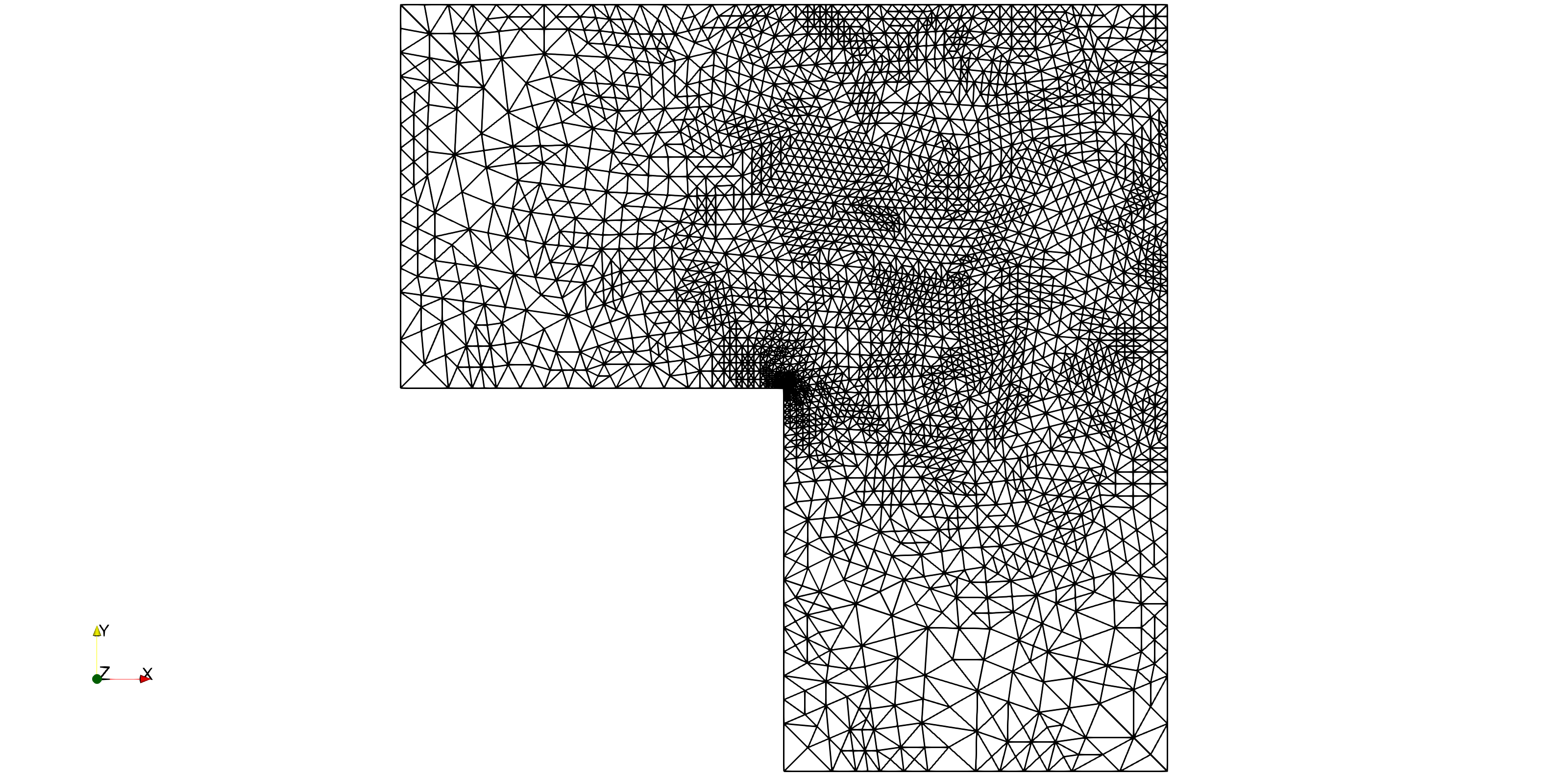}
	\end{minipage}
	\caption{Test 4. Adapted meshes associated to estimator $\eta$ in the seventh, eleventh and last iteration. Top row: $\mathrm{P}_0^{2}\text{-}\mathbb{NED}^{(1)}_0$ scheme  with $10023$, $45401$ and $204461$ degrees of freedom. Bottom row: $\mathrm{P}_0^{2}\text{-}\mathbb{NED}^{(2)}_0$ with $4677, 16179$ and $57171$ degrees of freedom. }
	\label{fig:lshape-mesh-refinamientos}
\end{figure}
\bibliographystyle{siamplain}
\bibliography{LRV_vorticity}

\begin{thebibliography}{10}

\bibitem{MR1885308}
{\sc M.~Ainsworth and J.~T. Oden}, {\em A posteriori error estimation in finite
  element analysis}, Pure and Applied Mathematics (New York),
  Wiley-Interscience [John Wiley \& Sons], New York, 2000,
  \url{https://doi.org/10.1002/9781118032824}.

\bibitem{MR4263828}
{\sc A.~Al-Taweel, X.~Wang, X.~Ye, and S.~Zhang}, {\em A stabilizer free weak
  {G}alerkin finite element method with supercloseness of order two}, Numer.
  Methods Partial Differential Equations, 37 (2021), pp.~1012--1029,
  \url{https://doi.org/10.1002/num.22564}.

\bibitem{AlnaesBlechta2015a}
{\sc M.~S. Aln{\ae}s, J.~Blechta, J.~Hake, A.~Johansson, B.~Kehlet, A.~Logg,
  C.~Richardson, J.~Ring, M.~E. Rognes, and G.~N. Wells}, {\em The fenics
  project version 1.5}, Archive of Numerical Software, 3 (2015),
  \url{https://doi.org/10.11588/ans.2015.100.20553}.

\bibitem{MR3164557}
{\sc P.~F. Antonietti, L.~Beir\~{a}o~da Veiga, D.~Mora, and M.~Verani}, {\em A
  stream virtual element formulation of the {S}tokes problem on polygonal
  meshes}, SIAM J. Numer. Anal., 52 (2014), pp.~386--404,
  \url{https://doi.org/10.1137/13091141X}.

\bibitem{MR3197278}
{\sc M.~G. Armentano and V.~Moreno}, {\em A posteriori error estimates of
  stabilized low-order mixed finite elements for the {S}tokes eigenvalue
  problem}, J. Comput. Appl. Math., 269 (2014), pp.~132--149,
  \url{https://doi.org/10.1016/j.cam.2014.03.027}.

\bibitem{MR1115235}
{\sc I.~Babu\v{s}ka and J.~Osborn}, {\em Handbook of numerical analysis. {V}ol.
  {II}},  (1991), pp.~x+928.
\newblock Finite element methods. Part 1.

\bibitem{MR4327458}
{\sc T.~P. Barrios, E.~M. Behrens, and R.~Bustinza}, {\em An a posteriori error
  estimate for a dual mixed method applied to {S}tokes system with non-null
  source terms}, Adv. Comput. Math., 47 (2021), pp.~Paper No. 77, 34,
  \url{https://doi.org/10.1007/s10444-021-09906-2}.

\bibitem{MR4082227}
{\sc L.~Beir\~{a}o~da Veiga, F.~Dassi, and G.~Vacca}, {\em The {S}tokes complex
  for virtual elements in three dimensions}, Math. Models Methods Appl. Sci.,
  30 (2020), pp.~477--512, \url{https://doi.org/10.1142/S0218202520500128}.

\bibitem{MR3626409}
{\sc L.~Beir\~{a}o~da Veiga, C.~Lovadina, and G.~Vacca}, {\em Divergence free
  virtual elements for the {S}tokes problem on polygonal meshes}, ESAIM Math.
  Model. Numer. Anal., 51 (2017), pp.~509--535,
  \url{https://doi.org/10.1051/m2an/2016032}.

\bibitem{MR3097958}
{\sc D.~Boffi, F.~Brezzi, and M.~Fortin}, {\em Mixed finite element methods and
  applications}, vol.~44 of Springer Series in Computational Mathematics,
  Springer, Heidelberg, 2013, \url{https://doi.org/10.1007/978-3-642-36519-5}.

\bibitem{MR1655512}
{\sc D.~Boffi, F.~Brezzi, and L.~Gastaldi}, {\em On the convergence of
  eigenvalues for mixed formulations}, vol.~25, 1997, pp.~131--154 (1998).
\newblock Dedicated to Ennio De Giorgi.

\bibitem{MR1642801}
{\sc D.~Boffi, F.~Brezzi, and L.~Gastaldi}, {\em On the problem of spurious
  eigenvalues in the approximation of linear elliptic problems in mixed form},
  Math. Comp., 69 (2000), pp.~121--140,
  \url{https://doi.org/10.1090/S0025-5718-99-01072-8}.

\bibitem{MR3647956}
{\sc D.~Boffi, D.~Gallistl, F.~Gardini, and L.~Gastaldi}, {\em Optimal
  convergence of adaptive {FEM} for eigenvalue clusters in mixed form}, Math.
  Comp., 86 (2017), pp.~2213--2237, \url{https://doi.org/10.1090/mcom/3212}.

\bibitem{MR3712172}
{\sc D.~Boffi, L.~Gastaldi, R.~Rodr\'{\i}guez, and I.~\v{S}ebestov\'{a}}, {\em
  Residual-based {\it a posteriori} error estimation for the {M}axwell's
  eigenvalue problem}, IMA J. Numer. Anal., 37 (2017), pp.~1710--1732,
  \url{https://doi.org/10.1093/imanum/drw066}.

\bibitem{MR0520174}
{\sc P.~G. Ciarlet}, {\em The finite element method for elliptic problems},
  North-Holland Publishing Co., Amsterdam-New York-Oxford, 1978.
\newblock Studies in Mathematics and its Applications, Vol. 4.

\bibitem{MR1722056}
{\sc R.~G. Dur\'{a}n, L.~Gastaldi, and C.~Padra}, {\em A posteriori error
  estimators for mixed approximations of eigenvalue problems}, Math. Models
  Methods Appl. Sci., 9 (1999), pp.~1165--1178,
  \url{https://doi.org/10.1142/S021820259900052X}.

\bibitem{MR975121}
{\sc E.~B. Fabes, C.~E. Kenig, and G.~C. Verchota}, {\em The {D}irichlet
  problem for the {S}tokes system on {L}ipschitz domains}, Duke Math. J., 57
  (1988), pp.~769--793, \url{https://doi.org/10.1215/S0012-7094-88-05734-1}.

\bibitem{MR2835711}
{\sc G.~N. Gatica, L.~F. Gatica, and A.~M\'{a}rquez}, {\em Augmented mixed
  finite element methods for a vorticity-based velocity--pressure--stress
  formulation of the {S}tokes problem in 2{D}}, Internat. J. Numer. Methods
  Fluids, 67 (2011), pp.~450--477, \url{https://doi.org/10.1002/fld.2362}.

\bibitem{MR3453481}
{\sc G.~N. Gatica, L.~F. Gatica, and F.~A. Sequeira}, {\em A priori and a
  posteriori error analyses of a pseudostress-based mixed formulation for
  linear elasticity}, Comput. Math. Appl., 71 (2016), pp.~585--614,
  \url{https://doi.org/10.1016/j.camwa.2015.12.009}.

\bibitem{MR2594823}
{\sc G.~N. Gatica, A.~M\'{a}rquez, and M.~A. S\'{a}nchez}, {\em Analysis of a
  velocity-pressure-pseudostress formulation for the stationary {S}tokes
  equations}, Comput. Methods Appl. Mech. Engrg., 199 (2010), pp.~1064--1079,
  \url{https://doi.org/10.1016/j.cma.2009.11.024}.

\bibitem{MR3864690}
{\sc J.~Gedicke and A.~Khan}, {\em Arnold-{W}inther mixed finite elements for
  {S}tokes eigenvalue problems}, SIAM J. Sci. Comput., 40 (2018),
  pp.~A3449--A3469, \url{https://doi.org/10.1137/17M1162032}.

\bibitem{MR4071826}
{\sc J.~Gedicke and A.~Khan}, {\em Divergence-conforming discontinuous
  {G}alerkin finite elements for {S}tokes eigenvalue problems}, Numer. Math.,
  144 (2020), pp.~585--614, \url{https://doi.org/10.1007/s00211-019-01095-x}.

\bibitem{MR4022421}
{\sc P.~Huang and Q.~Zhang}, {\em A posteriori error estimates for the {S}toke
  eigenvalue problem based on a recovery type estimator}, Bull. Math. Soc. Sci.
  Math. Roumanie (N.S.), 62(110) (2019), pp.~295--304.

\bibitem{MR3047040}
{\sc S.~Jia, H.~Chen, and H.~Xie}, {\em A posteriori error estimator for
  eigenvalue problems by mixed finite element method}, Sci. China Math., 56
  (2013), pp.~887--900, \url{https://doi.org/10.1007/s11425-013-4614-0}.

\bibitem{MR0203473}
{\sc T.~Kato}, {\em Perturbation theory for linear operators}, Die Grundlehren
  der mathematischen Wissenschaften, Band 132, Springer-Verlag New York, Inc.,
  New York, 1966.

\bibitem{MR4077220}
{\sc F.~Lepe and D.~Mora}, {\em Symmetric and nonsymmetric discontinuous
  {G}alerkin methods for a pseudostress formulation of the {S}tokes spectral
  problem}, SIAM J. Sci. Comput., 42 (2020), pp.~A698--A722,
  \url{https://doi.org/10.1137/19M1259535}.

\bibitem{Lepe2021}
{\sc F.~Lepe and G.~Rivera}, {\em A virtual element approximation for the
  pseudostress formulation of the stokes eigenvalue problem}, Computer Methods
  in Applied Mechanics and Engineering, 379 (2021),
  \url{https://doi.org/10.1016/j.cma.2021.113753}.

\bibitem{LRVSISC}
{\sc F.~Lepe, G.~Rivera, and J.~Vellojin}, {\em Mixed methods for the
  velocity-pressure-pseudostress formulation of the {S}tokes eigenvalue
  problem}, SIAM J. Sci. Comput., Accepted for publication (2022).

\bibitem{MR3095260}
{\sc H.~Liu, W.~Gong, S.~Wang, and N.~Yan}, {\em Superconvergence and a
  posteriori error estimates for the {S}tokes eigenvalue problems}, BIT, 53
  (2013), pp.~665--687, \url{https://doi.org/10.1007/s10543-013-0422-8}.

\bibitem{MR4263661}
{\sc X.~Liu, M.~T. Nakao, C.~You, and S.~Oishi}, {\em Explicit a posteriori and
  a priori error estimation for the finite element solution of {S}tokes
  equations}, Jpn. J. Ind. Appl. Math., 38 (2021), pp.~545--559,
  \url{https://doi.org/10.1007/s13160-020-00449-5}.

\bibitem{logg2012automated}
{\sc A.~Logg, K.-A. Mardal, and G.~Wells}, {\em Automated solution of
  differential equations by the finite element method: The FEniCS book},
  vol.~84, Springer Science \& Business Media, 2012,
  \url{https://doi.org/https://doi.org/10.1007/978-3-642-23099-8}.

\bibitem{MR2473688}
{\sc C.~Lovadina, M.~Lyly, and R.~Stenberg}, {\em A posteriori estimates for
  the {S}tokes eigenvalue problem}, Numer. Methods Partial Differential
  Equations, 25 (2009), pp.~244--257, \url{https://doi.org/10.1002/num.20342}.

\bibitem{MR3335223}
{\sc S.~Meddahi, D.~Mora, and R.~Rodr\'{\i}guez}, {\em A finite element
  analysis of a pseudostress formulation for the {S}tokes eigenvalue problem},
  IMA J. Numer. Anal., 35 (2015), pp.~749--766,
  \url{https://doi.org/10.1093/imanum/dru006}.

\bibitem{monk2003finite}
{\sc P.~Monk}, {\em Finite element methods for Maxwell's equations}, Oxford
  University Press, 2003.

\bibitem{MR592160}
{\sc J.-C. N\'{e}d\'{e}lec}, {\em Mixed finite elements in {${\bf R}^{3}$}},
  Numer. Math., 35 (1980), pp.~315--341,
  \url{https://doi.org/10.1007/BF01396415}.

\bibitem{MR864305}
{\sc J.-C. N\'{e}d\'{e}lec}, {\em A new family of mixed finite elements in
  {${\bf R}^3$}}, Numer. Math., 50 (1986), pp.~57--81,
  \url{https://doi.org/10.1007/BF01389668}.

\bibitem{MR4044773}
{\sc R.~Oyarz\'{u}a, M.~Solano, and P.~Z\'{u}\~{n}iga}, {\em A priori and a
  posteriori error analyses of a high order unfitted mixed-{FEM} for {S}tokes
  flow}, Comput. Methods Appl. Mech. Engrg., 360 (2020), pp.~112780, 40,
  \url{https://doi.org/10.1016/j.cma.2019.112780}.

\bibitem{MR1600081}
{\sc G.~Savar\'{e}}, {\em Regularity results for elliptic equations in
  {L}ipschitz domains}, J. Funct. Anal., 152 (1998), pp.~176--201,
  \url{https://doi.org/10.1006/jfan.1997.3158}.

\bibitem{MR4372679}
{\sc L.~Sun and Y.~Yang}, {\em The a posteriori error estimates and adaptive
  computation of nonconforming mixed finite elements for the {S}tokes
  eigenvalue problem}, Appl. Math. Comput., 421 (2022), p.~Paper No. 126951,
  \url{https://doi.org/10.1016/j.amc.2022.126951}.

\bibitem{MR1284252}
{\sc R.~Verf\"{u}rth}, {\em A posteriori error estimation and adaptive
  mesh-refinement techniques}, in Proceedings of the {F}ifth {I}nternational
  {C}ongress on {C}omputational and {A}pplied {M}athematics ({L}euven, 1992),
  vol.~50, 1994, pp.~67--83,
  \url{https://doi.org/10.1016/0377-0427(94)90290-9}.

\bibitem{MR3059294}
{\sc R.~Verf\"{u}rth}, {\em A posteriori error estimation techniques for finite
  element methods}, Numerical Mathematics and Scientific Computation, Oxford
  University Press, Oxford, 2013,
  \url{https://doi.org/10.1093/acprof:oso/9780199679423.001.0001}.

\end{thebibliography}
\end{document}